\newtheorem{Lemma}{Lemma}
\newtheorem{theorem}{Theorem}
\newtheorem{remark}{Remark}
\newtheorem{example}{Example}
\numberwithin{equation}{section}
\numberwithin{Lem}{section}
\numberwithin{Defi}{section}
\numberwithin{Theo}{section}
\numberwithin{Rem}{section}
\numberwithin{Coro}{section}
\numberwithin{Fig}{section}
\title{A Müntz-collocation spectral method for weakly singular Volterra delay-integro-differential equations}
\author[1]{Borui Zhao\thanks{ m202270040@hust.edu.cn}}
\affil[1]{School of Mathematics and Statistics, Huazhong University of Science and Technology, Wuhan 430074, China}
\date{}
\begin{document}
\maketitle
\begin{abstract}
	A Müntz spectral collocation method is implemented for solving weakly singular Volterra integro-differential equations (VDIEs) with proportional delays. After constructing the numerical scheme to seek an approximate solution, we derive error estimates in a weighted $L^2$ and $L^{\infty}$-norms. A rigorous proof reveals that the proposed method can handle the weak singularity of the exact solution at the initial point $t=0$, with the numerical errors decaying exponentially in certain cases. Moreover, several examples will illustrate our convergence analysis.
\end{abstract}
	\noindent {\bf Keywords:} {Volterra integro-differential equations, proportional delays, Müntz  collocation method, weakly singular kernels, Convergence analysis.}

\section{Introduction}\label{section_Introduction}
Volterra integro-differential equations (VIDEs) are widely used in mathematical models such as population dynamics\cite{application_population} and viscoelastic phenomena\cite{application_粘弹性现象}. In this paper, we consider weakly singular VIDEs with proportional delays
	\begin{align} 
		y^{\prime}(t)=&a_1(t) y(t)+b_1(t) y(\varepsilon t)+f_1(t)+\left(\mathcal{K}_1y\right)(t)+\left(\mathcal{K}_2y\right)(t),  \   \
		t \in I_0:=[0,T] ,\label{eq_prime}\\
		y(0)=& y_0.\label{eq_origin}
	\end{align}
where$\quad 0 \leq \mu<1$, $a_1{(t)}$, $b_1{(t)}$, $f_1{(t)}$, $K_1(t, s)$, $K_2(t, \tau)$ are given smooth functions. $\left(\mathcal{K}_1y\right)(t)= \int_0^t(t-s)^{-\mu}K_1(t, s)$$y(s) d s$, $\left(\mathcal{K}_2y\right)(t)=\int_0^{\varepsilon t}(\varepsilon t-\tau)^{-\mu}K_2(t, \tau) y(\tau) d \tau.$

Spectral methods plays a crucial role for smooth problems and can provide exponential convergence and excellent error estimates. C. Huang, T. Tang, and Z. Zhang\cite{spectral_2} gave a supergeometric convergence for Volterra and Fredholm integral equations through spectral collocation method.  A spectral collocation method for weakly singular VIEs have been innovatively constructed by Chen and Tang\cite{spectal_TangTao}. Based on this, Chen and others have proposed a spectral Jacobi  collocation method for VIDEs and VIDEs with proportional delays \cite{spectral_Chen_WVIDEs}-\cite{spectral_Chen_WVIDEs_delay} for VIDEs and with proportional delays, and the references therein. These works obtained the exponential convergence results under $L^2$-norm. \cite{spectral_4} has revealed spectral Petrov-Galerkin methods for solving VIDEs. Besides, there are many other strategies for solving VIDEs with weakly singular kernels, such as variable transformation method\cite{spectral_VT_1}-\cite{spectral_VT_3}, the solution to the equation will have better regularity. 

For delay VIEs and VIDEs, Sheng has established a series of works\cite{delay_Sheng_1}-\cite{delay_Sheng_3}, where he provide convergence analysis of the hp-version of the multistep spectral collocation method. Sinc, Tau and Petrov–Galerkin methods are introduced in the the papers\cite{VIDEs_delays_1}-\cite{VIDEs_delays_5} to solve VIDEs. Recently, \cite{WVIDEs_hp_MaZheng} and \cite{WVIDEs_hp_QinYu} have obtained an hp-version error bound to solve weakly singular VIDEs and weakly singular VIDEs with vanishing delays, achieving the exponential rate of convergence.

Hou and others\cite{spectral_3}\cite{Muntz} have applied fractional Jacobi polynomials to VIEs and obtained excellent numerical results. There
have been also some recent developments such as \cite{3VIEs_MaZheng} and \cite{CVIDEs_MaZheng}. The purpose of this paper is to design a fractional polynomial collocation method with a fractional coefficient $\lambda(0 < \lambda \leq 1)$ for solving the second kind of VIDEs with proportional delays, and to provide a convergence analysis. The solutions of the equations \eqref{eq_prime} often exhibit weak singularity at the initial point $t=0$. We can construct numerical solutions in the form of $\sum_{m=0}^{N}a_mt^{m\lambda}$\cite{Ma毕业论文}, and choose an appropriate $\lambda$ in order to counteract this singularity.

Hou and others\cite{spectral_3}\cite{Muntz} have applied fractional Jacobi polynomials to VIEs, achieving good numerical results. Recent developments include \cite{3VIEs_MaZheng} and \cite{CVIDEs_MaZheng}. This paper aims to design a fractional polynomial collocation method with a fractional coefficient $\lambda$ (where $0 < \lambda \leq 1$) for solving VIDEs of the second kind with proportional delays, and to offer a convergence analysis. A weak singularity at the initial point $t=0$ is exhibited for the solutions to the equations \eqref{eq_prime}. We can construct numerical solutions in the form of $\sum_{m=0}^{N}a_mt^{m\lambda}$\cite{Ma毕业论文}.

This paper is organized as follows. In Section \ref{section_Preliminaries}, some function spaces and useful lemmas will be intorduced in order to demenstrate convergence results. In Section \ref{section_Numerical Scheme}, a fractional $Jacobi$ collocation method will be constructed to estimate the exact solutions of  the equation \eqref{eq_prime}. In Section \ref{section_Convergence Analysis}, under the weighted $L^2_{\omega^{\alpha,\beta}}$ and $L^{\infty}$-norm, we will give a rigorous proof for the convergence analysis. In Section \ref{section_Numerical experiments}, several numerical examples will prove the theoretical analysis of Section 5.

\section{Preliminaries}\label{section_Preliminaries}
From now on, $C$ represents a generic positive constant that is independent of $N$, the number of selected $Jacobi-Gauss$ points. $N$ is a suffiecent large positive integer. Let $I:=[0,1]$.

\subsection{Function spaces}
Let $r\geq0$ and $\kappa \in[0,1]$. Define a space denoted by $C^{r, \kappa}(I)$, in which all functions whose $r$-th derivatives are Hölder continuous with exponent $\kappa$. Its norm is defined as:
	\begin{align}
		\|f(\theta)\|_{r, \kappa}=\max _{0 \leq i \leq r} \max _{\theta \in I}\left|f^{(i)}(\theta)\right|+\max _{0 \leq i \leq r} \sup _{\theta, \eta \in I, \theta \neq \eta} \frac{\left|f^{(i)}(\theta)-f^{(i)}(\eta)\right|}{|\theta-\eta|^\kappa} ,f(\theta) \in C^{r, \kappa}(I) .\label{eq_C_rk}
	\end{align}
When $\kappa = 0$, $C^{r,0}(I)$ is called the space of functions with $r$ continous derivatives on I,which equals to the common space $C^{r}(I)$.\\
Define the $\lambda$-polynomial space \cite{Muntz}:
	$$
	P_n^\lambda\left(\mathbb{R}^{+}\right):=\operatorname{span}\left\{1, \theta^\lambda, \theta^{2 \lambda}, \ldots, \theta^{n \lambda}\right\},
	$$
where $\mathbb{R}^{+}=[0,+\infty), 0<\lambda \leq 1$. $n$-th $\lambda$-polynomials can be presented as 
	$$
	p_n^\lambda(\theta):=k_n \theta^{n \lambda}+k_{n-1} \theta^{(n-1) \lambda}+\cdots+k_1 \theta^\lambda+k_0, \quad k_n \neq 0, \theta \in \mathbb{R}^{+}.
	$$
The sequence of $\{p_n^\lambda\}_{n=0}^{\infty}$ is called to be orthogonal in $L_{\omega}^{2}(I)$ if
	$$
		\left(p_m^\lambda, p_n^\lambda\right)_\omega=\int_0^1 p_n^\lambda(\theta) p_m^\lambda(\theta) \omega(\theta) d \theta=\gamma_m \delta_{m,n},
	$$
where $\gamma_n=\left\|p_n^\lambda\right\|_{0, \omega}^2:=\left(p_n^\lambda, p_n^\lambda\right)_\omega \text { and } \delta_{m, n} \text { is the } Kronecker\text{ delta}.$\\
Then we define $P_n^\lambda(I)$-space which satisfies
	$$
		P_n^\lambda(I):=\operatorname{span}\left\{p_0^\lambda, a_1^\lambda, \ldots, p_n^\lambda\right\},
	$$
This is a special type of Müntz space\cite{Ma毕业论文}, specifically, $P_n^1(I)$ represents the space formed by polynomials of degree no more than $n$.\\
Now we introduce the non-uniformly Jacobi-weighted Sobolev space\cite{Muntz}
	$$
	B_{\alpha, \beta}^{m,1}(I)=\left\{u(\theta): \partial_\theta^k u(\theta) \in L_{\omega^{\alpha+k, \beta+k}}^2(I), 0 \leq k \leq m\right\},m \in \mathbb{N}
	$$
endowed with the inner product, semi-norm and norm:
		$$
	\begin{aligned}
		& (u, v)_{B_{\alpha, \beta}^{m,1}}=\sum_{k=0}^m\left(\partial_\theta^k u, \partial_\theta^k v\right)_{\omega^{\alpha+k, \beta+k,1}},\\
		& |u|_{B_{\alpha, \beta}^{m,1}}=\left\|\partial_\theta^m u\right\|_{0,\omega^{\alpha+m, \beta+m,1}}, \quad\|u\|_{B_{\alpha, \beta}^{m,1}}=(u, u)_{B_{\alpha, \beta}^{m,1}}^{\frac{1}{2}} .
	\end{aligned}
	$$

\subsection{Fractional Jaboci polynomials}
The fractional $Jaboci$ polynomials are derived from the standard $Jacobi$ polynomials via a variable transformation, which effectively maps the interval $[-1,1]$ to $[0,1]$. The relationship between the two is as follows\cite{Muntz}
	$$
	J_n^{\alpha, \beta, \lambda}(\theta)=J_n^{\alpha, \beta}\left(2 \theta^\lambda-1\right), \quad \forall \theta \in I,\alpha, \beta>-1,0<\lambda \leq 1.
	$$
The form of the classical $Jacobi$ polynomials $J_n^{\alpha, \beta}(\theta)$ is 
	$$
	J_n^{\alpha, \beta}(\theta)=\frac{\Gamma(n+\alpha+1)}{n!\Gamma(n+\alpha+\beta+1)} \sum_{k=0}^n\binom{n}{k} \frac{\Gamma(n+k+\alpha+\beta+1)}{\Gamma(k+\alpha+1)}\left(\frac{\theta-1}{2}\right)^k .
	$$
So fractional $Jaboci$ polynomials can be readily derived,
	$$
	J_n^{\alpha, \beta, \lambda}(\theta)=\frac{\Gamma(n+\alpha+1)}{n!\Gamma(n+\alpha+\beta+1)} \sum_{k=0}^n\binom{n}{k} \frac{\Gamma(n+k+\alpha+\beta+1)}{\Gamma(k+\alpha+1)}\left(\theta^\lambda-1\right)^k .
	$$
The relationship between the two sets of $Jacobi$-$Gauss$ quadrature nodes and their corresponding weights also exist. Let the standard $Jacobi$-$Gauss$  quadrature nodes be $\lbrace t_i \rbrace_{j=0}^{N}$, with weights $\lbrace w_j \rbrace_{j=0}^{N}$. $\lbrace \theta_{j},\omega_{j} \rbrace_{j=0}^{N}$ are denoted to be the fractional $Jacobi$-$Gauss$  quadrature nodes and weights on $I_0 := [0,1]$. We demonstrate the relationship:
$$
\theta_{j}=(\frac{t_j+1}{2})^{\frac{1}{\lambda}},\omega_{j}=2^{-(\alpha+\beta+1)}w_j,\quad j=0,...,N.
$$
To simplify the notation, we denote the fractional $Jabobi-Gauss$ quadrature nodes of degree $n$ as $\lbrace \theta_j \rbrace_{j=0}^{N}$, with corresponding weights denoted as $\lbrace \omega_j \rbrace_{j=0}^{N}$ in the following text. The weight function is as follows:
	\begin{align}\label{eq_wight_equation}
		\qquad\qquad\qquad\qquad\qquad
		\omega^{\alpha, \beta, \lambda}(\theta):=\lambda\left(1-\theta^\lambda\right)^\alpha \theta^{(\beta+1) \lambda-1}.
	\end{align}
Note that $\left\{F_{j,\lambda}\left(\theta\right)\right\}_{j=0}^N$ represents the generalized Lagrange interpolation basis functions
	$$
	F_{j,\lambda}\left(\theta\right)=\prod_{i=0, i \neq j}^N \frac{\theta^\lambda-\theta_i^\lambda}{\theta_j^\lambda-\theta_i^\lambda}, \quad 0 \leq j \leq N,
	$$
where $\theta_0<\theta_1<\cdots<\theta_{N-1}<\theta_N$ are zeros of the fractional $Jacobi$ polynomials $J_{N+1}^{\alpha, \beta, \lambda}(\theta)$ and $F_{j,\lambda}\left(\theta\right)$ clearly satisfy
	$$
	F_{j,\lambda}\left(\theta_i\right)=\delta_{i j} .
	$$
We define the generalized interpolation operator $I_{N, \lambda}^{\alpha, \beta}$ as follows
	\begin{align}\label{eq_Interpolation_operator_sum}
	\qquad \qquad  I_{N, \lambda}^{\alpha, \beta} v(\theta)=\sum_{j=0}^N v\left(\theta_j\right) F_{j,\lambda}\left(\theta\right)=\sum_{j=0}^N v\left(z_j^{\frac{1}{\lambda}}\right) F_{j,1}\left(z\right)=I_{N, 1}^{\alpha, \beta}v(z^{\frac{1}{\lambda}}),\quad \theta=z^{\frac{1}{\lambda}}.
\end{align}

\subsection{Elementary lemmas}
In this section, we will present the lemmas required for the convergence analysis in section \ref{section_Convergence Analysis}.

\begin{Lemma}\label{lemma_Gronwall}
	$($Gronwall inequality$)$ Assume that
	$$
	f(\theta) \leq g(\theta)+C \int_0^{\theta} f(\eta) d \eta, \quad 0 \leq \theta \leq 1,
	$$\\
	If $f(\theta), g(\theta)$ are non-negative integrable functions on $[0, 1]$ and $C > 0$, then there exists $L>0$ such that:
	$$
	f(\theta) \leq g(\theta)+L \int_0^{\theta} g(\eta) d \eta, \quad 0 \leq \theta \leq 1 .
	$$
\end{Lemma}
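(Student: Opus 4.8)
The plan is to reduce the integral inequality to the classical form of Gronwall's lemma and then exploit the boundedness of the exponential integrating factor on the compact interval $[0,1]$. Since $f$ is assumed only integrable, the cleanest rigorous route avoids differentiation and instead iterates the hypothesis directly. Writing $T\phi(\theta):=C\int_0^\theta \phi(\eta)\,d\eta$ for the associated Volterra operator, the assumption reads $f\le g+Tf$ pointwise. Substituting this bound into itself repeatedly and using the monotonicity of $T$ on non-negative functions yields, after $n$ steps,
\[
f(\theta)\le \sum_{k=0}^{n-1}T^k g(\theta)+T^n f(\theta),\qquad 0\le\theta\le1.
\]

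The next step is to identify the iterated kernels explicitly. A short induction based on interchanging the order of integration (Dirichlet's formula) gives $T^k g(\theta)=\frac{C^k}{(k-1)!}\int_0^\theta(\theta-\eta)^{k-1}g(\eta)\,d\eta$ for $k\ge 1$. Summing the resulting series and recognizing the exponential,
\[
\sum_{k=1}^{\infty}T^k g(\theta)=C\int_0^\theta g(\eta)\,e^{C(\theta-\eta)}\,d\eta,
\]
so that, once the remainder is shown to be negligible, we arrive at the sharp form $f(\theta)\le g(\theta)+C\int_0^\theta e^{C(\theta-\eta)}g(\eta)\,d\eta$.

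To pass to the stated conclusion I would then bound the kernel on the compact interval: for $0\le\eta\le\theta\le1$ one has $e^{C(\theta-\eta)}\le e^{C}$, whence
\[
f(\theta)\le g(\theta)+Ce^{C}\int_0^\theta g(\eta)\,d\eta,
\]
and the claim follows upon setting $L:=Ce^{C}>0$.

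The main obstacle is the low regularity of $f$. Because $f$ need only be integrable, one cannot immediately pass to the differential inequality $(e^{-C\theta}F)'\le e^{-C\theta}g$ with $F(\theta)=\int_0^\theta f$ without first invoking absolute continuity of $F$. The iteration argument circumvents this, but it requires controlling the remainder $T^n f(\theta)=\frac{C^n}{(n-1)!}\int_0^\theta(\theta-\eta)^{n-1}f(\eta)\,d\eta$; this is precisely where the integrability of $f$ together with the restriction $\theta\le 1$ enter, since then $(\theta-\eta)^{n-1}\le 1$ gives $T^n f(\theta)\le \frac{C^n}{(n-1)!}\int_0^1 f(\eta)\,d\eta\to0$ as $n\to\infty$. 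Establishing this decay rigorously, rather than the routine kernel computations, is the step deserving the most care.
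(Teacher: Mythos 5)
Your proof is correct. Note first that the paper itself offers no proof of this lemma at all: it is quoted as a standard Gronwall-type result (in the same spirit as Lemma \ref{lemma_Chen_Lemma3.5_norm}, which is cited from the literature), so there is no paper argument to compare against; your write-up is a self-contained justification of a fact the paper takes for granted. On its own merits the argument is sound: the Picard/Neumann iteration $f \le \sum_{k=0}^{n-1}T^k g + T^n f$ follows by induction from monotonicity and linearity of the Volterra operator $T$ on non-negative integrable functions, the iterated-kernel identity $T^k g(\theta)=\frac{C^k}{(k-1)!}\int_0^\theta(\theta-\eta)^{k-1}g(\eta)\,d\eta$ is a routine Dirichlet-interchange induction, the summation of the series under the integral sign is licensed by monotone convergence (all terms non-negative), and — the step you rightly flag as the crux — the remainder is killed by the factorial: $T^n f(\theta)\le \frac{C^n}{(n-1)!}\int_0^1 f(\eta)\,d\eta \to 0$, which uses exactly the two hypotheses available (integrability of $f$ and $\theta\le 1$) and nothing more. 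This yields the sharp constant $L=Ce^{C}$ and deliberately avoids the differentiation/integrating-factor route, which would require an absolute-continuity detour that is unnecessary at this level of regularity. The only cosmetic remark is that your intermediate display $\sum_{k=1}^{\infty}T^k g(\theta)=C\int_0^\theta e^{C(\theta-\eta)}g(\eta)\,d\eta$ is used only as an upper bound, so you could equally well bound each $T^k g$ term by $\frac{C^k}{(k-1)!}\int_0^\theta g(\eta)\,d\eta$ and sum the numerical series, but the version you give is the sharper and more standard statement.
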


\begin{Lemma}\label{lemma_Chen_Lemma3.5_norm}$($see $\cite{spectral_Chen_WVIDEs_delay}$$)$
If $E(\theta)$ is a nonnegative integrable function which satisfies
	$$
	\begin{aligned}
		E(\theta)\leq J(\theta)+L\int_{0}^{\theta}E(\eta)d\eta,0\leq\theta\leq1,\notag
	\end{aligned}
	$$
where $J(\theta)$ is a integrable function and $L$ is a positive constant. Then the following conclusion holds
	$$
	\begin{aligned}
		&\left\|E(\theta)\right\|_{\infty}\leq C\left\|J(\theta)\right\|_{\infty} ,\notag\\
		&\left\|E(\theta)\right\|_{0,\omega^{\alpha,\beta,1}}\leq C\left\|J(\theta)\right\|_{0,\omega^{\alpha,\beta,1}}.\notag
	\end{aligned}
	$$
\end{Lemma}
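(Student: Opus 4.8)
The plan is to reduce the statement to the Gronwall estimate of Lemma~\ref{lemma_Gronwall} and then bound the two norms one at a time. Since $E(\theta)\ge 0$ and $J(\theta)\le |J(\theta)|$, the hypothesis immediately yields
\[
E(\theta)\le |J(\theta)|+L\int_0^\theta E(\eta)\,d\eta,\qquad 0\le\theta\le 1,
\]
where now $|J|$ is nonnegative and integrable. Applying Lemma~\ref{lemma_Gronwall} with $f=E$, $g=|J|$ and the constant $L$ in place of $C$ furnishes a constant $L'>0$ with
\[
E(\theta)\le |J(\theta)|+L'\int_0^\theta |J(\eta)|\,d\eta,\qquad 0\le\theta\le 1.
\]
Both conclusions then follow by taking the appropriate norm of this reduced inequality, so the content of the lemma is really a statement about the indefinite-integral operator $G(\theta):=\int_0^\theta|J(\eta)|\,d\eta$.

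For the $L^\infty$ estimate I would bound the integral crudely by the length of the interval: since $\int_0^\theta|J(\eta)|\,d\eta\le\int_0^1|J(\eta)|\,d\eta\le\|J\|_\infty$, the reduced inequality gives $\|E\|_\infty\le(1+L')\|J\|_\infty$, which is the first assertion with $C=1+L'$. This step is routine and uses only that $I=[0,1]$ has unit length.

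The weighted $L^2$ estimate is the substantive part. Applying the triangle inequality in $L^2_{\omega^{\alpha,\beta,1}}(I)$ to the reduced inequality leaves me to prove the Hardy-type bound $\|G\|_{0,\omega^{\alpha,\beta,1}}\le C\|J\|_{0,\omega^{\alpha,\beta,1}}$. I would obtain this by a Cauchy--Schwarz argument against the weight: writing $|J|=\bigl(|J|\,\omega^{1/2}\bigr)\,\omega^{-1/2}$ with $\omega:=\omega^{\alpha,\beta,1}$ gives the pointwise estimate
\[
G(\theta)^2\le\Bigl(\int_0^\theta|J(\eta)|^2\omega(\eta)\,d\eta\Bigr)\Bigl(\int_0^\theta\omega(\eta)^{-1}\,d\eta\Bigr)\le\|J\|_{0,\omega}^2\,\Omega(\theta),\qquad \Omega(\theta):=\int_0^\theta\omega(\eta)^{-1}\,d\eta,
\]
after which integrating $G(\theta)^2$ against $\omega$ reduces everything to the finiteness of $\int_0^1\Omega(\theta)\,\omega(\theta)\,d\theta$.

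The main obstacle is exactly this final integrability check, because the Jacobi weight $\omega^{\alpha,\beta,1}(\theta)=(1-\theta)^\alpha\theta^\beta$ degenerates at both endpoints. Near $\theta=0$ the factor $\Omega(\theta)$ contains $\int_0^\theta\eta^{-\beta}\,d\eta$, which is finite only for $\beta<1$, while near $\theta=1$ one must verify that the growth of $\Omega$ coming from $(1-\eta)^{-\alpha}$ is compensated by the outer factor $(1-\theta)^\alpha$. I would carry out this endpoint analysis (equivalently, check the Muckenhoupt condition $\sup_{0<r<1}\bigl(\int_r^1\omega\bigr)^{1/2}\bigl(\int_0^r\omega^{-1}\bigr)^{1/2}<\infty$) to produce a finite constant $C$ and close the argument; the required restriction holds in our setting because the collocation parameter $\beta$ is chosen to encode the weak singularity at $t=0$.
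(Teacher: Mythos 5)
Your argument is sound, and it is worth noting that the paper never actually proves this lemma: it is imported by citation from \cite{spectral_Chen_WVIDEs_delay}, and when the weighted estimate is used (in the proof of Theorem \ref{theorem_Gronwall}) the paper invokes it side by side with the generalized Hardy inequality, Lemma \ref{lemma_Hardy}. That is exactly the proof in the cited literature: your Gronwall reduction is identical, and the key bound $\bigl\|\int_0^\theta|J(\eta)|\,d\eta\bigr\|_{0,\omega^{\alpha,\beta,1}}\le C\|J\|_{0,\omega^{\alpha,\beta,1}}$ is obtained there by applying Lemma \ref{lemma_Hardy} with $p=q=2$, kernel $\rho\equiv1$ and $x=y=\omega^{\alpha,\beta,1}$, whose hypothesis is precisely the Muckenhoupt condition you mention at the end. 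What you do differently is to make this step self-contained: Cauchy--Schwarz against the weight plus the integrability check $\int_0^1\Omega(\theta)\,\omega(\theta)\,d\theta<\infty$. Your sufficient condition is slightly cruder than Muckenhoupt's, but it is more elementary, and in the only range where the lemma is applied ($-1<\alpha,\beta\le-\frac12$, forced by Lemmas \ref{lemma_Interpolation_infinity} and \ref{lemma_Lesbegue_constant}) both checks are one line, since there $\omega^{-1}(\eta)=(1-\eta)^{-\alpha}\eta^{-\beta}$ is itself integrable, so $\Omega$ is bounded by $\Omega(1)$. A genuine merit of your route is that it exposes a hypothesis the paper's statement leaves implicit: the weighted conclusion is not true for arbitrary Jacobi parameters (it fails for $\beta\ge1$), so a restriction such as $\beta<1$ is really needed. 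One correction, though: your closing justification misattributes why the restriction holds. It holds because the convergence theorems restrict the \emph{interpolation} parameters to $-1<\alpha,\beta\le-\frac12$; it is not because ``$\beta$ encodes the weak singularity'' --- the parameter that does that is the quadrature exponent $\beta=\frac{1}{\lambda}-1$ in the weight $\omega^{-\mu,\frac{1}{\lambda}-1,1}$, which can perfectly well be $\ge1$ (take $\lambda\le\frac12$), and for such a weight your integrability check, and Muckenhoupt's condition, would fail at $\theta=0$.
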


\begin{Lemma}$($ see $\cite{Lemma_linear_operator_1}$,$\cite{Lemma_linear_operator_2}$$)$
	If $r$ is a nonnegative integer and $\kappa \in(0,1)$, there exists a linear operator $\mathcal{K}_N$ that maps $C^{r, \kappa}(I)$ to $P_N^1(I)$, such that
	\begin{align}\label{eq_Crk}
			\left\|v-\mathcal{K}_N v\right\|_{\infty} \leq C_{r, \kappa} N^{-(r+\kappa)}\|v\|_{r, \kappa}, \quad v \in C^{r, \kappa}(I),
	\end{align}
	$C_{r, \kappa}$ is a constant that may depend on $r$ and $\kappa$. For the linear weakly singular kernel integral operators $\mathcal{K}_i$ (where $i = 1,2$) defined in the previous section:
	\begin{align}\label{lemma_Operator}
		&(\mathcal{K}_1 v)(t)=\int_0^t(t-s)^{-\mu}K_1(t, s) v(s) d s,\\
		&(\mathcal{K}_2 v)(t)= \int_0^{\varepsilon t}(\varepsilon t-s)^{-\mu}K_2(t, \tau) v(\tau) d \tau.
	\end{align}
	$K_i$ belongs to $C(I \times I)$ and $K_i(t, t) \neq 0$ for all $t \in I$. For any $0<\kappa<1-\mu$, we will prove that $\mathcal{K}_i$ is a linear operator mapping from $C(I)$ to $C^{0, \kappa}(I)$.
\end{Lemma}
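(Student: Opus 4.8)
The plan is to treat $\mathcal{K}_1$ and $\mathcal{K}_2$ in parallel, since the delay factor $\varepsilon$ in $\mathcal{K}_2$ merely rescales the singular variable. Linearity is immediate from the integral representations, so the substance is the Hölder bound. First I would fix $v\in C(I)$; because $0\le\mu<1$ the weight $(t-s)^{-\mu}$ is integrable on $[0,t]$, which shows $(\mathcal{K}_1 v)(t)$ is finite with $|(\mathcal{K}_1 v)(t)|\le C\|v\|_\infty$ for every $t$. It then suffices to take $0\le t_1<t_2\le1$, write $\delta=t_2-t_1$, and prove $|(\mathcal{K}_1 v)(t_2)-(\mathcal{K}_1 v)(t_1)|\le C\,\delta^{\kappa}$. (Note that the non-degeneracy hypothesis $K_i(t,t)\neq0$ is not needed for this mapping property; it enters the regularity/invertibility arguments elsewhere.)

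The main step is a three-term splitting
\[
(\mathcal{K}_1 v)(t_2)-(\mathcal{K}_1 v)(t_1)=I_1+I_2+I_3,
\]
where $I_1=\int_{t_1}^{t_2}(t_2-s)^{-\mu}K_1(t_2,s)v(s)\,ds$ is the contribution of the extended integration range, $I_2=\int_0^{t_1}\big[(t_2-s)^{-\mu}-(t_1-s)^{-\mu}\big]K_1(t_2,s)v(s)\,ds$ isolates the change in the singular kernel, and $I_3=\int_0^{t_1}(t_1-s)^{-\mu}\big[K_1(t_2,s)-K_1(t_1,s)\big]v(s)\,ds$ isolates the change in $K_1$. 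For $I_1$, bounding $K_1$ and $v$ by their sup-norms and integrating the weight gives $|I_1|\le C\,\delta^{1-\mu}$. For $I_3$, the (Lipschitz) smoothness of $K_1$ in its first argument, available from the standing smoothness assumption on the kernels, yields $|K_1(t_2,s)-K_1(t_1,s)|\le C\delta$, and integrating the remaining integrable weight gives $|I_3|\le C\delta$.

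The crux is $I_2$. Since $t_1<t_2$ we have $(t_1-s)^{-\mu}\ge(t_2-s)^{-\mu}$ on $[0,t_1]$, so the integrand keeps a fixed sign and the explicit antiderivative gives
\[
\int_0^{t_1}\big|(t_2-s)^{-\mu}-(t_1-s)^{-\mu}\big|\,ds=\frac{t_1^{1-\mu}-t_2^{1-\mu}+\delta^{1-\mu}}{1-\mu}\le\frac{\delta^{1-\mu}}{1-\mu},
\]
where the last inequality uses the monotonicity $t_1^{1-\mu}\le t_2^{1-\mu}$; hence $|I_2|\le C\,\delta^{1-\mu}$. Combining the three bounds with $\delta\le1$ and $\kappa<1-\mu\le1$ (so that $\delta^{1-\mu}\le\delta^{\kappa}$ and $\delta\le\delta^{\kappa}$) produces $|(\mathcal{K}_1 v)(t_2)-(\mathcal{K}_1 v)(t_1)|\le C\,\delta^{\kappa}$, which together with boundedness is exactly membership in $C^{0,\kappa}(I)$. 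For $\mathcal{K}_2$ the identical splitting applies after replacing $t$ by $\varepsilon t$ and the limit $t_1$ by $\varepsilon t_1$; the extra factors $\varepsilon^{1-\mu}$ and $\varepsilon$ are harmless constants, so the same conclusion follows. I expect the estimate of $I_2$, the difference of two singular weights, to be the main obstacle: it is there that one must exploit the sign of the integrand and the exact antiderivative to avoid an apparent blow-up near $s=t_1$, and it is precisely these singular-weight integrals that dictate the restriction $\kappa<1-\mu$.
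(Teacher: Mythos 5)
Your proposal is correct and follows essentially the same route as the paper: the paper's proof (given for $\mathcal{K}_2$, with $\mathcal{K}_1$ analogous) uses exactly your three-term splitting --- its $G_2$, $G_1^{(1)}$, $G_1^{(2)}$ are your $I_1$, $I_2$, $I_3$ --- and handles the difference of singular weights by the same sign/antiderivative argument, concluding via $\delta^{1-\mu-\kappa}\le 1$. The only cosmetic difference is in the kernel-difference term: the paper assumes $K_i(\cdot,s)\in C^{0,\kappa}(I)$ and cancels the factor $\delta^{-\kappa}$ directly, while you invoke the stronger Lipschitz bound $|K_1(t_2,s)-K_1(t_1,s)|\le C\delta$ available from the smoothness of the kernels, which is equally valid in this paper's setting.
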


\begin{Lemma}\label{lemma_J6org}
When $0<\kappa<1-\mu$, for any function $v \in C(I)$ and $K_i \in C(I \times I)$ with $K_i(\cdot, s) \in C^{0, \kappa}(I),(i=1,2)$, there is
	$$
	\begin{aligned}
		\notag \frac{|(\mathcal{K}_i v)(\theta_1)-(\mathcal{K}_i v)(\theta_2)|}{|\theta_1-\theta_2|^{\kappa}} \leq C \max _{\theta_1 \in I}|v(\theta_1)|, \quad \forall \theta_1 ,\theta_2 \in I, \theta_1 \neq \theta_2 .
	\end{aligned}
	$$
	Thus it can be inferred that
	$$
	\begin{aligned}
		\|\mathcal{K}_i v\|_{0, \kappa} \leq C\|v\|_{\infty}, \quad 0<\kappa<1-\mu .\notag
	\end{aligned}
	$$
\begin{proof}
Without loss of generality, we will only prove the case of $\mathcal{K}_2 v$. We can assume that $0 \leq \theta_2 < \theta_1 \leq 1$. Let $K_2$ denote the maximum value over $\eta \in I$ that satisfies $\|K_2(\cdot, \varepsilon\eta)\|_{0,\kappa}$. We define the Beta function as $B(\alpha, \beta)$. 
		\begin{align}
			&\frac{\left|\left(\mathcal{K}_2 v\right)(\theta_1)-\left(\mathcal{K}_2 v\right)(\theta_2)\right|}{|\theta_1-\theta_2|^{-\kappa}}\notag\\
			&= \frac{\left|\int_0^{\varepsilon\theta_2}  (\varepsilon \theta_2-\eta)^{-\mu}  K_2(\theta_2, \eta) v(\eta) \, d\eta \right.
				-\left.\int_0^{\varepsilon\theta_1}  (\varepsilon \theta_1-\eta)^{-\mu}  K_2(\theta_1, \eta) v(\eta) \, d\eta\right| }{|\theta_1-\theta_2|^{-\kappa}}\notag \\
			&\leq\|v\|_{\infty}|\theta_1-\theta_2|^{-\kappa}\left| \int_0^{\varepsilon\theta_2} (\varepsilon \theta_2-\eta)^{-\mu}  K_2(\theta_2, \eta) \right.-\left. (\varepsilon \theta_1-\eta)^{-\mu} K_2(\theta_1, \eta) \, d\eta  \right| \notag \\ 
			&+ \|v\|_{\infty}|\theta_1-\theta_2|^{-\kappa}\int_{\varepsilon\theta_2}^{\varepsilon\theta_1}  (\varepsilon \theta_1-\eta)^{-\mu} \left|K_2(\theta_1, \eta)\right| \, d\eta \notag \\
			&= G_1+G_2\notag \\
			&\leq  G_1^{(1)}+G_2^{(1)}+G_2.
		\end{align}
where
		\begin{align}
			&G_1^{(1)}=\|v\|_{\infty}\left(\theta_1-\theta_2\right)^{-\kappa}
			\int_0^{\varepsilon\theta_2}
			\left|
		\left(\varepsilon \theta_2-\eta\right)^{-\mu}-
			\left(\varepsilon \theta_1-\eta\right)^{-\mu}
			\right|
			\left|
			K_2\left(\theta_2,\eta\right)
			\right|
			d \eta,\notag\\
			&G_1^{(2)}=\|v\|_{\infty}\left(\theta_1-\theta_2\right)^{-\kappa}
			\int_0^{\varepsilon\theta_2}
			\left(\varepsilon \theta_1-\eta\right)^{-\mu}
			\left|
			K_2\left(\theta_2,\eta\right)-K_2\left(\theta_1,\eta\right)
			\right|
			d\eta,\notag\\
			&G_{2}=\|v\|_{\infty}\left(\theta_1-\theta_2\right)^{-\kappa}\int_{\varepsilon\theta_2}^{\varepsilon\theta_1}\left(\varepsilon \theta_1-\eta\right)^{-\mu}\left|K_2\left(\theta_1, \eta\right)\right| d \eta\notag.
		\end{align}
We further analyze $G_1^{(1)}, G_1^{(2)}, G_{2}$, we have
		\begin{align}
			G_1^{(1)} 
			&\leq K_2\|v\|_{\infty}\left(\theta_1-\theta_2\right)^{-\kappa}\left| \int_{0}^{\varepsilon\theta_2}\left(\varepsilon \theta_2-\eta\right)^{-\mu} \, d\eta \right. -\left. \int_{0}^{\varepsilon\theta_1}\left(\varepsilon \theta_1-\eta\right)^{-\mu} \, d\eta \right.
			+ \left. \int_{\varepsilon\theta_2}^{\varepsilon\theta_1}\left(\varepsilon \theta_1-\eta\right)^{-\mu} \, d\eta \right| \notag\\
			&\leq C\|v\|_{\infty}\left(\theta_1-\theta_2\right)^{-\kappa}\int_{\varepsilon\theta_2}^{\varepsilon\theta_1}{\left(\varepsilon\theta_1-\eta\right)^{-\mu}}\, d\eta \notag\\
			&\leq C\|v\|_{\infty}\left(\theta_1-\theta_2\right)^{-\kappa}\varepsilon^{2-\mu}\left(\theta_1-\theta_2\right)^{1-\mu}
			\left.\int_{0}^{1}\left(1-\xi\right)^{-\mu}\left(\theta_2+\left(\theta_1-\theta_2\right)\xi\right)d \xi\right.\notag\\
			&\leq C\|v\|_{\infty}\left(\theta_1-\theta_2\right)^{1-\mu-\kappa}\left.\int_{0}^{1}\left(1-\xi\right)^{-\mu}\left(\theta_1 \right)d \xi\right.\notag\\
			&\leq C\|v\|_{\infty}B(1-\mu,1)\notag\\
			&\leq C\|v\|_{\infty}.\label{eq_M1(1)}
		\end{align}
When \eqref{eq_C_rk} satisfies $r=0$, it can be inferred similarly
		\begin{align}
			G_1^{(2)}=&\|v\|_{\infty}
			\int_0^{\varepsilon\theta_2}
			\left(\varepsilon \theta_1-\eta\right)^{-\mu}
			\frac{\left|
				K_2\left(\theta_2,\eta\right)-K_2\left(\theta_1,\eta\right)
				\right|}{\left(\theta_1-\theta_2\right)^{\kappa}}
			d\eta\notag\\
			&\leq \underset{\eta\in I}{\max}\|K_2(\cdot,\eta)\|_{0,\kappa}\|v\|_{\infty}
			\int_0^{\varepsilon\theta_1}
			\left(\varepsilon\theta_1-\eta\right)^{-\mu} 
			d\eta\notag\\
			&\leq \underset{\eta\in I}{\max}\|K_2(\cdot,\eta)\|_{0,\kappa}\|v\|_{\infty}B(1-\mu,1)\notag\\
			&\leq C\|v\|_{\infty}.\label{eq_M1(2)} 
		\end{align}
Similar with the proof process for $M_1^{(1)}$, we can obtain that
		\begin{align}
			G_2&\leq C\|v\|_{\infty}B(1-\mu,1)\notag\\
			&\leq C\|v\|_{\infty}.\label{eq_M2}
		\end{align}
It is clear from \eqref{eq_M1(1)}-\eqref{eq_M2} that \eqref{eq_C_rk} holds, and it is obvious that $\mathcal{K}_i$ are linear operators. Then we complete the proof.
	\end{proof}
\end{Lemma}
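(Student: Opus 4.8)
The plan is to split $\|\mathcal{K}_i v\|_{0,\kappa}$ into its sup-norm part and its Hölder seminorm part and bound each by $C\|v\|_\infty$. The sup-norm part is immediate: replacing $|K_i|$ by its maximum on $I\times I$ and $|v|$ by $\|v\|_\infty$, the residual integral $\int_0^{\varepsilon\theta}(\varepsilon\theta-\eta)^{-\mu}\,d\eta=(1-\mu)^{-1}(\varepsilon\theta)^{1-\mu}$ is bounded on $I$ because $\varepsilon\theta\le 1$, so $\max_{\theta\in I}|(\mathcal{K}_i v)(\theta)|\le C\|v\|_\infty$. All the real work is therefore in the seminorm, and as in the statement I would treat only $\mathcal{K}_2$, the operator $\mathcal{K}_1$ being the special case $\varepsilon=1$.

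For the seminorm, fix $0\le\theta_2<\theta_1\le 1$ and write out $(\mathcal{K}_2 v)(\theta_1)-(\mathcal{K}_2 v)(\theta_2)$, splitting the integration into the common interval $[0,\varepsilon\theta_2]$ (contribution $G_1$) and the tail $[\varepsilon\theta_2,\varepsilon\theta_1]$ (contribution $G_2$). On the common interval the integrand difference is $(\varepsilon\theta_1-\eta)^{-\mu}K_2(\theta_1,\eta)-(\varepsilon\theta_2-\eta)^{-\mu}K_2(\theta_2,\eta)$; adding and subtracting $(\varepsilon\theta_1-\eta)^{-\mu}K_2(\theta_2,\eta)$ splits $G_1\le G_1^{(1)}+G_1^{(2)}$, where $G_1^{(1)}$ carries the difference of singular factors $(\varepsilon\theta_2-\eta)^{-\mu}-(\varepsilon\theta_1-\eta)^{-\mu}$ weighted by $K_2(\theta_2,\eta)$, and $G_1^{(2)}$ carries the kernel increment $K_2(\theta_1,\eta)-K_2(\theta_2,\eta)$ weighted by the integrable factor $(\varepsilon\theta_1-\eta)^{-\mu}$.

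Two of the three pieces are routine. In $G_1^{(2)}$ the hypothesis $K_2(\cdot,\eta)\in C^{0,\kappa}(I)$ gives $|K_2(\theta_1,\eta)-K_2(\theta_2,\eta)|\le \max_{\eta\in I}\|K_2(\cdot,\eta)\|_{0,\kappa}\,|\theta_1-\theta_2|^\kappa$, whose factor $|\theta_1-\theta_2|^\kappa$ cancels the prefactor $|\theta_1-\theta_2|^{-\kappa}$ exactly and leaves an integrable singular integral bounded by $B(1-\mu,1)$. In $G_2$ the tail integral equals $(1-\mu)^{-1}(\varepsilon(\theta_1-\theta_2))^{1-\mu}$, so together with the prefactor it contributes $(\theta_1-\theta_2)^{1-\mu-\kappa}$, which is bounded because $\kappa<1-\mu$ and $\theta_1-\theta_2\le 1$.

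The main obstacle is $G_1^{(1)}$: near $\eta=\varepsilon\theta_2$ both singular factors blow up and their pointwise difference is not $O(|\theta_1-\theta_2|)$, so a pointwise estimate fails. The resolution is to integrate before estimating. Since $(\varepsilon\theta_2-\eta)^{-\mu}\ge(\varepsilon\theta_1-\eta)^{-\mu}$ for $\eta<\varepsilon\theta_2$, the absolute value can be dropped and the integral telescopes to $(1-\mu)^{-1}\big[(\varepsilon\theta_2)^{1-\mu}-(\varepsilon\theta_1)^{1-\mu}+(\varepsilon(\theta_1-\theta_2))^{1-\mu}\big]$; the first two terms are nonpositive, leaving at most $(1-\mu)^{-1}(\varepsilon(\theta_1-\theta_2))^{1-\mu}$. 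This again yields $(\theta_1-\theta_2)^{1-\mu-\kappa}$, bounded precisely by the condition $\kappa<1-\mu$. Summing $G_1^{(1)}+G_1^{(2)}+G_2\le C\|v\|_\infty$ gives the seminorm bound, and combining with the sup-norm bound completes $\|\mathcal{K}_i v\|_{0,\kappa}\le C\|v\|_\infty$.
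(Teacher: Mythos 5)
Your proposal is correct and follows essentially the same route as the paper: the same split into the common-interval term $G_1$ and tail term $G_2$, the same add-and-subtract decomposition $G_1\le G_1^{(1)}+G_1^{(2)}$, the same telescoping-plus-monotonicity trick for $G_1^{(1)}$, and the same use of $\kappa<1-\mu$ to control $(\theta_1-\theta_2)^{1-\mu-\kappa}$. Your only addition is the explicit bound on the sup-norm part of $\|\mathcal{K}_i v\|_{0,\kappa}$, which the paper leaves implicit; your execution of $G_1^{(1)}$ by direct integration is also slightly cleaner than the paper's substitution, but the underlying argument is identical.
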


\begin{Lemma}\label{lemma_J6}
	When $0<\kappa<1-\mu$, for any $v(t) \in C(I), K_i\in C(I \times I)$, and $K_i(\cdot, s) \in C^{0, \kappa}(I),i=1,2$, there exists a constant $C$ such that
	\begin{align}\label{eq_J6_1}
		\frac{\left|(\mathcal{K}_i v)\left(\theta_1^{\frac{1}{\lambda}}\right)-(\mathcal{K}_i v)\left(\theta_2^{\frac{1}{\lambda}}\right)\right|}{|\theta_1-\theta_2|^\kappa} \leq C \max _{\theta_1 \in I}|v(\theta_1)|, \quad \forall \theta_1, \theta_2 \in I, \theta_1\neq \theta_2 .
	\end{align}
	which is equivalent with 
	\begin{align}\label{eq_J6}
		\left\|(\mathcal{K}_i v)\left(\theta_1^{\frac{1}{\lambda}}\right)\right\|_{0, \kappa} \leq C\|v\|_{\infty}.
	\end{align}
	\begin{proof}
		By virtue of Lemma \ref{lemma_J6org}, we can obtain
		\begin{align}\label{eq_J6_2}
			\frac{\left|(\mathcal{K}_i v)\left(\theta_1^{\frac{1}{\lambda}}\right)-(\mathcal{K}_i v)\left(\theta_2^{\frac{1}{\lambda}}\right)\right|}{|\theta_1^{\frac{1}{\lambda}}-\theta_2^{\frac{1}{\lambda}}|^\kappa} \leq C \max _{\theta_1 \in I}|v(\theta_1)|, \quad \forall \theta_1, \theta_2 \in I, \theta_1\neq \theta_2,
		\end{align}
		
		When $0<\lambda \leq 1$, we have 
		\begin{align}
			\left|\theta_1^{\frac{1}{\lambda}}-\theta_2^{\frac{1}{\lambda}}\right|^\kappa \sim O\left(|\theta_1-\theta_2|^\kappa\right) \text { or }\left|\theta_1^{\frac{1}{\lambda}}-\theta_2^{\frac{1}{\lambda}}\right|^\kappa \sim o\left(|\theta_1-\theta_2|^\kappa\right) \text {,}\notag
		\end{align}
		
		Then it can be derived that
		\begin{align}\label{eq_J6_3}
			\frac{\left|(\mathcal{K}_i v)\left(\theta_1^{\frac{1}{\lambda}}\right)-(\mathcal{K}_i v)\left(\theta_2^{\frac{1}{\lambda}}\right)\right|}{|\theta_1-\theta_2|^\kappa}
			\leq C \frac{\left|(\mathcal{K}_i v)\left(\theta_1^{\frac{1}{\lambda}}\right)-(\mathcal{K}_i v)\left(\theta_2^{\frac{1}{\lambda}}\right)\right|}{|\theta_1^{\frac{1}{\lambda}}-\theta_2^{\frac{1}{\lambda}}|^\kappa}.
		\end{align}
		
		Combing \eqref{eq_J6_1},\eqref{eq_J6_2},\eqref{eq_J6_3},we verify \eqref{eq_J6} and finish the proof.
	\end{proof}
\end{Lemma}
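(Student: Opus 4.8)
The plan is to deduce the claim from the Hölder estimate for $\mathcal{K}_i$ already proved in Lemma \ref{lemma_J6org}, using the substitution $\theta \mapsto \theta^{1/\lambda}$. Set $g(\theta) := (\mathcal{K}_i v)(\theta^{1/\lambda})$. First I would note that for $\theta \in I = [0,1]$ and $0 < \lambda \leq 1$ one has $1/\lambda \geq 1$, so $\theta^{1/\lambda} \in [0,1] = I$; thus the argument $\theta^{1/\lambda}$ never leaves the interval on which Lemma \ref{lemma_J6org} is valid. Applying that lemma at the two points $\theta_1^{1/\lambda}, \theta_2^{1/\lambda} \in I$ gives at once \eqref{eq_J6_2}, namely
$$\frac{|g(\theta_1)-g(\theta_2)|}{|\theta_1^{1/\lambda}-\theta_2^{1/\lambda}|^\kappa} \leq C\max_{\theta\in I}|v(\theta)|.$$

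The heart of the argument is the comparison of the two denominators $|\theta_1-\theta_2|^\kappa$ and $|\theta_1^{1/\lambda}-\theta_2^{1/\lambda}|^\kappa$. I would establish the elementary Lipschitz bound
$$|\theta_1^{1/\lambda}-\theta_2^{1/\lambda}| \leq \tfrac{1}{\lambda}\,|\theta_1-\theta_2|, \qquad \theta_1,\theta_2 \in [0,1],$$
via the mean value theorem: the derivative of $t \mapsto t^{1/\lambda}$ is $\tfrac{1}{\lambda}t^{1/\lambda-1}$, which is bounded by $1/\lambda$ on $[0,1]$ precisely because $1/\lambda - 1 \geq 0$ when $0 < \lambda \leq 1$. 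Raising both sides to the power $\kappa$ yields $|\theta_1^{1/\lambda}-\theta_2^{1/\lambda}|^\kappa \leq C|\theta_1-\theta_2|^\kappa$, which rearranges into \eqref{eq_J6_3}.

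Multiplying \eqref{eq_J6_2} by this comparison then delivers \eqref{eq_J6_1} directly. To upgrade the seminorm estimate \eqref{eq_J6_1} to the full norm bound \eqref{eq_J6}, I would add the trivial uniform bound $\max_{\theta\in I}|g(\theta)| \leq \|\mathcal{K}_i v\|_\infty \leq \|\mathcal{K}_i v\|_{0,\kappa} \leq C\|v\|_\infty$, again supplied by Lemma \ref{lemma_J6org}. Recalling the definition \eqref{eq_C_rk} of $\|\cdot\|_{0,\kappa}$ as the sum of the sup-norm and the Hölder seminorm, these two pieces together give \eqref{eq_J6}.

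I do not anticipate a genuine difficulty: the lemma is in essence a change-of-variables corollary of Lemma \ref{lemma_J6org}. The only step needing real care is the denominator comparison, where the sign of $1/\lambda - 1$ is decisive; the hypothesis $0 < \lambda \leq 1$ is exactly what makes $t \mapsto t^{1/\lambda}$ Lipschitz rather than merely Hölder on $[0,1]$, ensuring that the Hölder exponent $\kappa$ is preserved under the substitution and no loss of regularity occurs.
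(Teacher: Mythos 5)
Your proposal is correct and takes essentially the same route as the paper: apply Lemma \ref{lemma_J6org} at the points $\theta_1^{1/\lambda},\theta_2^{1/\lambda}\in I$ and then compare the two H\"older denominators to pass from \eqref{eq_J6_2} to \eqref{eq_J6_1}. If anything, your write-up is tighter than the paper's: your mean-value-theorem bound $|\theta_1^{1/\lambda}-\theta_2^{1/\lambda}|\leq \frac{1}{\lambda}|\theta_1-\theta_2|$ (valid exactly because $1/\lambda-1\geq 0$ on $[0,1]$) makes rigorous the comparison that the paper merely asserts with an informal ``$O$ or $o$'' statement, and you explicitly supply the sup-norm half of $\|\cdot\|_{0,\kappa}$ needed to conclude \eqref{eq_J6}, a step the paper leaves implicit.
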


\begin{Lemma}\label{lemma_Hardy}$($$\cite{spectal_TangTao}$ Generalized Hardy's inequality$)$ For all measurable functions $g \geq 0$, weight functions $x$ and $y$, $1 < p \leq q < \infty
	$, the generalized Hardy's inequality can be expressed as follows:
	$$
	\left(\int_a^b\left|\left(\mathcal{M} g\right)(t)\right|^q x(t) d t\right)^{1 / q} \leq C\left(\int_a^b\left|g(x)\right|^p y(t) d t\right)^{1 / p}
	$$ 
if and only if 
	$$
	\sup _{a<t<b}\left(\int_t^b x(s) d s\right)^{1 / q}\left(\int_a^t y^{1-p_{0}}(s) d s\right)^{1 / p_{0}}<\infty, \quad p_{0}=\frac{p}{p-1},
	$$\\
where the operator $\mathcal{M}$ is defined as 
	$$
	(\mathcal{M} g)(t)=\int_a^t \rho(x, \tau) g(\tau) d \tau.
	$$
with $\rho(x, s)$ being a given kernel,  $-\infty \leq a < b \leq \infty$, .
 \end{Lemma}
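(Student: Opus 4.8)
The plan is to recognize this as the classical weighted (Muckenhoupt) Hardy inequality and to prove the two implications of the equivalence separately. Since the stated Muckenhoupt-type condition involves only the weights $x$ and $y$, I would treat the canonical kernel $\rho\equiv 1$, i.e. $(\mathcal{M}g)(t)=\int_a^t g(\tau)\,d\tau$; a general bounded kernel $|\rho|\le M$ affects only the sufficiency direction, where it is absorbed into the constant $C$. Throughout I would abbreviate $X(t)=\int_t^b x(s)\,ds$, $Y(t)=\int_a^t y^{1-p_0}(s)\,ds$ and $A=\sup_{a<t<b}X(t)^{1/q}Y(t)^{1/p_0}$, so that the hypothesis reads $A<\infty$ and the goal is an estimate of the form $C\le C'A$.

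First I would prove necessity (inequality $\Rightarrow A<\infty$), the easy direction. For fixed $t\in(a,b)$ I would insert the test function $g=y^{1-p_0}\chi_{(a,t)}$. Since $(\mathcal{M}g)(r)=Y(t)$ for every $r\ge t$, the left-hand side is bounded below by $Y(t)\,X(t)^{1/q}$, while a direct computation using the identity $(1-p_0)p+1=1-p_0$ shows the right-hand side equals $C\,Y(t)^{1/p}$. Cancelling $Y(t)^{1/p}$ and using $1-\tfrac1p=\tfrac1{p_0}$ gives $X(t)^{1/q}Y(t)^{1/p_0}\le C$ for every $t$, i.e. $A\le C$. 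If $Y(t)=+\infty$ one first truncates the test function and lets the truncation increase to $t$.

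The substantial direction is sufficiency. Assuming $A<\infty$, I would insert the increasing factor $Y(s)^{\sigma}$ and apply H\"older's inequality with exponents $p$ and $p_0$:
\[
(\mathcal{M}g)(t)=\int_a^t g\,y^{1/p}Y^{-\sigma}\cdot y^{-1/p}Y^{\sigma}\,ds
\le \Big(\int_a^t g^p\,y\,Y^{-\sigma p}\,ds\Big)^{1/p}\Big(\int_a^t y^{1-p_0}\,Y^{\sigma p_0}\,ds\Big)^{1/p_0},
\]
where I used $y^{-p_0/p}=y^{1-p_0}$. Because $dY=y^{1-p_0}\,ds$, the second factor integrates explicitly to a constant multiple of $Y(t)^{\sigma+1/p_0}$ provided $\sigma p_0+1>0$. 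Raising to the power $q$, multiplying by $x(t)$ and integrating, the diagonal case $q=p$ then follows by Fubini's theorem: after exchanging the order of integration one is left with the inner integral $\int_s^b x(t)\,Y(t)^{(\sigma+1/p_0)p}\,dt$, which an integration by parts together with the pointwise bound $X(t)\le A^p\,Y(t)^{-p/p_0}$ controls by $C\,A^p\,Y(s)^{\sigma p}$; this exactly cancels the weight $Y(s)^{-\sigma p}$ and returns $C\,A^p\int_a^b g^p\,y\,ds$. The integration by parts forces the choice $-1/p_0<\sigma<0$, for which the boundary term at $t=b$ (where $X(b)=0$) vanishes and the remaining $Y^{\sigma p}$ term has a favourable sign.

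The main obstacle is the genuinely off-diagonal case $q>p$, where the $q/p$-th power blocks a direct application of Fubini. Here I would instead invoke Minkowski's integral inequality to pull the outer $L^{q/p}$-norm inside the nested integral $\big(\int_a^t g^p\,y\,Y^{-\sigma p}\,ds\big)^{q/p}$, and then bound the resulting one-dimensional kernel integrals against $A$ exactly as in the diagonal case, the balancing of exponents again fixing the admissible range of $\sigma$. Two further technical points I would dispatch by routine truncation: the possibility that $Y$ or $X$ is infinite on part of $(a,b)$, and the unbounded endpoints $a=-\infty$ or $b=+\infty$, both handled by first establishing the estimate on compact subintervals $[a',b']\subset(a,b)$ and passing to the limit via monotone convergence.
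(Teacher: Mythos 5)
The paper gives no proof of this lemma at all: it is quoted, kernel and all, from \cite{spectal_TangTao}, where it is in turn cited from the weighted-norm-inequalities literature, and it is only ever invoked in the sufficiency direction (in the proof of Theorem \ref{theorem_Gronwall}). Your proposal, by contrast, is a self-contained proof of the underlying classical result, and it is essentially correct; it is the standard Muckenhoupt--Bradley argument. The necessity step with the test function $g=y^{1-p_0}\chi_{(a,t)}$ and the identity $(1-p_0)p+1=1-p_0$ is the canonical one; in the sufficiency step, H\"older's inequality with the auxiliary factor $Y^{\sigma}$, $-1/p_0<\sigma<0$, followed by Fubini plus integration by parts when $q=p$, and by Minkowski's integral inequality when $q>p$, does close up: one gets $\int_s^b x(t)\,Y(t)^{(\sigma+1/p_0)q}\,dt\le C A^{q}\,Y(s)^{\sigma q}$ from the pointwise bound $X\le A^{q}Y^{-q/p_0}$, and its $p/q$-th power exactly cancels the weight $Y(s)^{-\sigma p}$ produced by H\"older. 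You also correctly identified the one defect in the statement as printed: with a general kernel $\rho$ the ``if and only if'' cannot be literally true, since the Muckenhoupt condition sees only the weights (take $\rho\equiv 0$); your repair --- prove the equivalence for $\rho\equiv 1$ and absorb a bounded kernel into the constant in the sufficiency direction, which is the only direction used downstream --- is the right reading. In short, where the paper buys brevity by citation, your route buys a complete and correct argument at the cost of length; the only caveat is that the off-diagonal case $q>p$ and the truncation arguments are sketched rather than written out, but the indicated exponent bookkeeping is sound.
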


In order to prove the interpolation error estimate in the $L^{\infty}$-norm and $L_{\omega^{\alpha,\beta,\lambda}}^{2}$-norm, we should introduce the following lemmas:
\begin{Lemma}\label{lemma_Interpolation_infinity}$($see $\cite{Muntz} $$)$
$I_{N, \lambda}^{\alpha, \beta}$ is the interpolation operator of fractional $Jacobi$ polynomials, when $-1<\alpha, \beta \leq-\frac{1}{2}$, for any $0 \leq l \leq m \leq N+1$, we have 
	$$
	\begin{aligned}\label{eq_Interpolation_infinity}
	\left\|v-I_{N, \lambda}^{\alpha, \beta} v\right\|_{\infty} \leq C N^{1 / 2-m}\left\|\partial_\theta^m v\left(\theta^{\frac{1}{\lambda}}\right)\right\|_{0, \omega^{\alpha+m, \beta+m, 1}},\quad \forall v\left(\theta^{\frac{1}{\lambda}}\right) \in B_{\alpha, \beta}^{m, 1}(I), m \geq 1.
	\end{aligned}
	$$ 
\end{Lemma}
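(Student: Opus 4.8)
The plan is to reduce the fractional interpolation error to the error of classical Jacobi-Gauss interpolation via the substitution $z=\theta^{\lambda}$, and then to invoke the known $L^{\infty}$ estimate for standard Jacobi interpolation. First I would set $z=\theta^{\lambda}$, a continuous bijection of $I=[0,1]$ onto itself, and introduce the composed function $w(z):=v\!\left(z^{1/\lambda}\right)$. By the identity \eqref{eq_Interpolation_operator_sum}, the fractional interpolant of $v$ at $\theta$ coincides with the classical (i.e. $\lambda=1$) Jacobi interpolant of $w$ at $z$,
$$\left(I_{N,\lambda}^{\alpha,\beta}v\right)(\theta)=\left(I_{N,1}^{\alpha,\beta}w\right)(z),\qquad z=\theta^{\lambda}.$$
Since $\theta\mapsto\theta^{\lambda}$ is a bijection of $I$ onto $I$, the supremum norm is invariant under this reparametrization, so
$$\left\|v-I_{N,\lambda}^{\alpha,\beta}v\right\|_{\infty}=\sup_{\theta\in I}\left|v(\theta)-\left(I_{N,\lambda}^{\alpha,\beta}v\right)(\theta)\right|=\sup_{z\in I}\left|w(z)-\left(I_{N,1}^{\alpha,\beta}w\right)(z)\right|=\left\|w-I_{N,1}^{\alpha,\beta}w\right\|_{\infty}.$$
This collapses the fractional problem to a purely classical one, and the hypothesis $v\!\left(\theta^{1/\lambda}\right)\in B_{\alpha,\beta}^{m,1}(I)$ is exactly the statement that $w\in B_{\alpha,\beta}^{m,1}(I)$.

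Next I would apply the standard $L^{\infty}$ interpolation error estimate at Jacobi-Gauss nodes to $w$. Under the hypothesis $-1<\alpha,\beta\le-\tfrac12$, the classical Guo--Wang-type estimate used in \cite{Muntz} gives, for $w\in B_{\alpha,\beta}^{m,1}(I)$ with $1\le m\le N+1$,
$$\left\|w-I_{N,1}^{\alpha,\beta}w\right\|_{\infty}\le C\,N^{1/2-m}\left\|\partial_z^m w\right\|_{0,\omega^{\alpha+m,\beta+m,1}}.$$
Because $\partial_z^m w(z)=\partial_z^m v\!\left(z^{1/\lambda}\right)$ is precisely the quantity written $\partial_\theta^m v\!\left(\theta^{1/\lambda}\right)$ in the statement (the subscript variable being a dummy), combining the two displays above yields the claimed bound, completing the reduction argument.

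The genuine content, and the main obstacle, lies in the classical estimate invoked in the second step, whose proof is delicate precisely in the range $-1<\alpha,\beta\le-\tfrac12$. The standard route writes $w-I_{N,1}^{\alpha,\beta}w=(w-\pi_N w)-I_{N,1}^{\alpha,\beta}(w-\pi_N w)$, where $\pi_N$ denotes the $L^2_{\omega^{\alpha,\beta,1}}$-orthogonal projection onto $P_N^1(I)$; one controls the projection error $\|w-\pi_N w\|$ through sharp decay estimates for the Jacobi coefficients of $\partial_z^m w$ in the weighted Sobolev space $B_{\alpha,\beta}^{m,1}(I)$, while the interpolation of the projection error is bounded by combining a stability (Lebesgue-constant) bound for Jacobi-Gauss interpolation with an inverse inequality, which together produce the factor $N^{1/2}$ lost relative to the $L^2$ rate. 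It is exactly the endpoint behaviour $\alpha,\beta\le-\tfrac12$ that keeps these two constants under control and gives the clean exponent $N^{1/2-m}$. Since this is the cited result, I would quote it rather than reprove it, devoting the written proof to the transparent change of variables in the first two steps.
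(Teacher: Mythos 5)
The paper offers no proof of this lemma at all: it is imported verbatim from \cite{Muntz}, so there is no internal argument to compare against. Your reduction via $z=\theta^{\lambda}$ (using the identity \eqref{eq_Interpolation_operator_sum} to identify $I_{N,\lambda}^{\alpha,\beta}v$ with the classical interpolant $I_{N,1}^{\alpha,\beta}$ of $w(z)=v\left(z^{1/\lambda}\right)$, noting the sup-norm is invariant under the bijection and that the hypothesis on $v\left(\theta^{1/\lambda}\right)$ is exactly $w\in B_{\alpha,\beta}^{m,1}(I)$), followed by citing the classical Jacobi--Gauss $L^{\infty}$ estimate for $-1<\alpha,\beta\le-\frac{1}{2}$, is precisely the mechanism by which the cited reference obtains the fractional result, so your proposal is correct and consistent with the paper's treatment.
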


\begin{Lemma}\label{lemma_Interpolation_L^2}$($see $\cite{Muntz} $$)$
It holds for $\forall v\left(\theta^{\frac{1}{\lambda}}\right) \in B_{\alpha, \beta}^{m, 1}(I), m \geq 1$, then
	\begin{align}\label{eq_Interpolation_L^2}
		\left\|v-I_{N, \lambda}^{\alpha, \beta} v\right\|_{0,\omega^{\alpha,\beta,1}} \leq C N^{-m}\left\|\partial_\theta^m v\left(\theta^{\frac{1}{\lambda}}\right)\right\|_{0, \omega^{\alpha+m, \beta+m, 1}}.
	\end{align}
\end{Lemma}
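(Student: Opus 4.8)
The plan is to reduce the fractional Jacobi interpolation error to the classical (non-fractional) Jacobi interpolation error by means of the substitution $z=\theta^{\lambda}$, and then to invoke the standard weighted $L^2$ estimate for ordinary Jacobi interpolation, namely $\|u-I_{N,1}^{\alpha,\beta}u\|_{0,\omega^{\alpha,\beta,1}}\leq CN^{-m}\|\partial_z^m u\|_{0,\omega^{\alpha+m,\beta+m,1}}$ for $u\in B_{\alpha,\beta}^{m,1}(I)$. The engine of the whole argument is the operator identity \eqref{eq_Interpolation_operator_sum}, which already rewrites $I_{N,\lambda}^{\alpha,\beta}$ as $I_{N,1}^{\alpha,\beta}$ acting on a composed function, so no new approximation theory is needed.

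First I would set $g(z):=v\bigl(z^{1/\lambda}\bigr)$, so that $v(\theta)=g(\theta^{\lambda})$, and by \eqref{eq_Interpolation_operator_sum} one has $(I_{N,\lambda}^{\alpha,\beta}v)(\theta)=(I_{N,1}^{\alpha,\beta}g)(\theta^{\lambda})$. Hence the fractional interpolation error is nothing but the classical interpolation error of $g$ read off at $z=\theta^{\lambda}$:
\begin{align}
(v-I_{N,\lambda}^{\alpha,\beta}v)(\theta)=(g-I_{N,1}^{\alpha,\beta}g)(\theta^{\lambda}).\notag
\end{align}
Writing $E:=g-I_{N,1}^{\alpha,\beta}g$, the weighted $L^2$ quantity on the left of \eqref{eq_Interpolation_L^2} becomes $\int_0^1|E(\theta^{\lambda})|^2\,\omega^{\alpha,\beta,\lambda}(\theta)\,d\theta$.

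The key step is the change of variables $z=\theta^{\lambda}$, $d\theta=\tfrac{1}{\lambda}z^{1/\lambda-1}\,dz$, in this integral. Substituting the explicit fractional weight \eqref{eq_wight_equation}, the leading factor $\lambda$, the power $\theta^{(\beta+1)\lambda-1}=z^{\beta+1-1/\lambda}$ and the Jacobian $\tfrac{1}{\lambda}z^{1/\lambda-1}$ collapse exactly, leaving $(1-z)^{\alpha}z^{\beta}=\omega^{\alpha,\beta,1}(z)$; in other words the fractional weight was designed precisely so that this substitution is weight-preserving. One therefore obtains the isometry $\|v-I_{N,\lambda}^{\alpha,\beta}v\|_{0,\omega^{\alpha,\beta,\lambda}}=\|E\|_{0,\omega^{\alpha,\beta,1}}$, which recasts the left-hand side entirely in terms of the standard Jacobi interpolation error of $g$.

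Finally I would note that the hypothesis $v(\theta^{1/\lambda})=g\in B_{\alpha,\beta}^{m,1}(I)$ is exactly the admissibility condition for the classical estimate, apply that estimate to $E$, and translate the resulting right-hand norm back using $\partial_z^m g(z)=\partial_\theta^m v(\theta^{1/\lambda})|_{\theta=z}$, which reproduces $CN^{-m}\|\partial_\theta^m v(\theta^{1/\lambda})\|_{0,\omega^{\alpha+m,\beta+m,1}}$ and closes the proof. I expect the only genuine work to be the weight bookkeeping in the change of variables of the previous paragraph; once that isometry is secured the estimate is immediate, and the very same substitution simultaneously yields the $L^{\infty}$ companion Lemma \ref{lemma_Interpolation_infinity}.
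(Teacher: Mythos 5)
Your proof is correct, but there is nothing in the paper to compare it against: the lemma is stated with ``see \cite{Muntz}'' and imported without proof, so the argument had to be reconstructed from scratch, and what you give is exactly the standard mechanism behind the cited result. The identity \eqref{eq_Interpolation_operator_sum} turns the fractional interpolant into the classical one acting on $g(z)=v\left(z^{1/\lambda}\right)$, and the substitution $z=\theta^{\lambda}$ is weight-preserving because, by \eqref{eq_wight_equation}, $\omega^{\alpha,\beta,\lambda}(\theta)\,d\theta=\lambda\left(1-\theta^{\lambda}\right)^{\alpha}\theta^{(\beta+1)\lambda-1}\,d\theta=(1-z)^{\alpha}z^{\beta}\,dz=\omega^{\alpha,\beta,1}(z)\,dz$; this yields the isometry $\left\|v-I_{N,\lambda}^{\alpha,\beta}v\right\|_{0,\omega^{\alpha,\beta,\lambda}}=\left\|g-I_{N,1}^{\alpha,\beta}g\right\|_{0,\omega^{\alpha,\beta,1}}$, after which the classical Jacobi--Gauss estimate $\left\|g-I_{N,1}^{\alpha,\beta}g\right\|_{0,\omega^{\alpha,\beta,1}}\leq CN^{-m}\left\|\partial_z^{m}g\right\|_{0,\omega^{\alpha+m,\beta+m,1}}$ (valid for all $\alpha,\beta>-1$ and $1\leq m\leq N+1$) closes the argument, the hypothesis $v\left(\theta^{1/\lambda}\right)\in B_{\alpha,\beta}^{m,1}(I)$ being precisely its admissibility condition. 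Two remarks. First, your isometry tacitly resolves a notational wrinkle in the paper: the left-hand norm in \eqref{eq_Interpolation_L^2} is printed as $\|\cdot\|_{0,\omega^{\alpha,\beta,1}}$, whereas the lemma is later invoked for the $\|\cdot\|_{0,\omega^{\alpha,\beta,\lambda}}$ norm (e.g.\ in \eqref{eq_E3_L2}); since the two readings coincide under $z=\theta^{\lambda}$, the estimate is the same either way, but this is worth stating explicitly. Second, your closing claim that the same substitution ``simultaneously yields'' Lemma \ref{lemma_Interpolation_infinity} is right in spirit (the sup-norm is trivially invariant under the substitution), but it requires a different classical input --- the $L^{\infty}$ interpolation estimate carrying the $N^{1/2-m}$ factor under the restriction $-1<\alpha,\beta\leq-\frac{1}{2}$ --- not the $L^{2}$ estimate used here.
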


\begin{Lemma}\label{lemma_Lesbegue_constant}$($see $\cite{Muntz} $$)$
	Let $\left\{F_{j,\lambda}\left(\theta\right)\right\}_{j=0}^N$ be the generalized Lagrange interpolation basis functions
	associated with the Gauss points of the fractional $Jacobi$ polynomials $J_{N+1}^{\alpha, \beta, \lambda}(x)$. $Lesbegue$ constant  is presented as 
	\begin{align}\label{eq_Lesbegue_constant}
		\left\|I_{N, \lambda}^{\alpha, \beta}\right\|_{\infty}:=\max _{x \in I} \sum_{i=0}^N\left|F_{j,\lambda}\left(\theta\right)\right|= \begin{cases}O(\log N), & -1<\alpha, \beta \leq-\frac{1}{2}, \\ O\left(N^{\gamma+\frac{1}{2}}\right), & \gamma=\max (\alpha, \beta), \text {otherwise}.
		\end{cases}
	\end{align}
\end{Lemma}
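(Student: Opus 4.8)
The plan is to reduce the fractional Lebesgue constant to the classical Jacobi--Gauss Lebesgue constant by exploiting the substitution $z=\theta^{\lambda}$ already recorded in \eqref{eq_Interpolation_operator_sum}. The point is that fractional interpolation is nothing but standard Jacobi interpolation carried out in the transformed variable, so the Lebesgue function is invariant under the change of variable, and the known asymptotics for the classical case transfer directly.

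First I would establish the pointwise identity between the two families of basis functions. Writing $z=\theta^{\lambda}$ and $z_i=\theta_i^{\lambda}=(t_i+1)/2$, the defining product for $F_{j,\lambda}$ becomes
$$
F_{j,\lambda}(\theta)=\prod_{i=0,\,i\neq j}^{N}\frac{\theta^{\lambda}-\theta_i^{\lambda}}{\theta_j^{\lambda}-\theta_i^{\lambda}}
=\prod_{i=0,\,i\neq j}^{N}\frac{z-z_i}{z_j-z_i}=:F_{j,1}(z),
$$
i.e.\ $F_{j,\lambda}(\theta)$ equals the ordinary Lagrange basis $F_{j,1}$ associated with the node set $\{z_i\}_{i=0}^{N}$, evaluated at $z$. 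Summing absolute values gives $\sum_{j=0}^{N}|F_{j,\lambda}(\theta)|=\sum_{j=0}^{N}|F_{j,1}(z)|$ for every $\theta\in I$.

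Next I would argue that the supremum is preserved. Since $0<\lambda\le 1$, the map $\theta\mapsto\theta^{\lambda}$ is a continuous strictly increasing bijection of $I=[0,1]$ onto itself, so maximizing the Lebesgue function over $\theta\in I$ is the same as maximizing over $z\in I$; hence
$$
\big\|I_{N,\lambda}^{\alpha,\beta}\big\|_{\infty}=\max_{\theta\in I}\sum_{j=0}^{N}\big|F_{j,\lambda}(\theta)\big|
=\max_{z\in I}\sum_{j=0}^{N}\big|F_{j,1}(z)\big|=\big\|I_{N,1}^{\alpha,\beta}\big\|_{\infty}.
$$
Because the $z_i=(t_i+1)/2$ are the affine images on $[0,1]$ of the standard Jacobi--Gauss nodes $t_i\in[-1,1]$, and the Lagrange basis is invariant under an affine change of variable, $\|I_{N,1}^{\alpha,\beta}\|_{\infty}$ coincides with the classical Jacobi--Gauss Lebesgue constant on $[-1,1]$, whose asymptotics give precisely the two stated cases.

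The genuinely new content is thus only this invariance reduction, which is routine once the substitution is written out. The main obstacle lies entirely in the classical estimate for the Jacobi--Gauss Lebesgue constant — the $O(\log N)$ behaviour in the range $-1<\alpha,\beta\le-\tfrac12$ and the $O(N^{\gamma+1/2})$ growth with $\gamma=\max(\alpha,\beta)$ otherwise — which rests on fine asymptotics of Jacobi polynomials and the spacing of their Gauss nodes; I would invoke this from \cite{Muntz} rather than reprove it here.
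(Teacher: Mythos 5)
The paper itself offers no proof of this lemma---it is quoted directly from \cite{Muntz}---so your proposal supplies the argument the paper leaves implicit, and it is the correct one. The identity $F_{j,\lambda}(\theta)=F_{j,1}(\theta^{\lambda})$, the fact that $\theta\mapsto\theta^{\lambda}$ is an increasing bijection of $I$ onto itself, and the affine invariance of the Lagrange basis together reduce the claim to the classical Jacobi--Gauss Lebesgue constant asymptotics, which both you and the paper ultimately take from the literature.
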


\begin{Lemma}\label{lemma_Ii1Ii2}$($see $\cite{Muntz} $$)$
For $\forall v \in B_{\alpha, \beta}^{m, 1}(I), m \geq 1$,$\forall \phi \in \mathcal{P}_N^1(I)$, there are some conclusions as follows
	\begin{align}\label{eq_con&dis}
		\left|(v, \phi)_{\omega^{\alpha, \beta, 1}}-(v, \phi)_{N, \omega^{\alpha, \beta, 1}}\right| \leq C N^{-m}\left\|\partial_\theta^m v\right\|_{0, \omega^{\alpha+m, \beta+m, 1}}\|\phi\|_{0, \omega^{\alpha, \beta, 1}},
	\end{align}
where
	\begin{align}
		&(v, \phi)_{\omega^{\alpha, \beta, 1}}=\int_0^1v(\theta)\phi(\theta)\theta^{\alpha}(1-\theta)^{\beta}d\theta\label{eq_inner_continue},\\
		&(v, \phi)_{N, \omega^{\alpha, \beta, 1}}=\sum_{k=0}^{N}v(\theta_k)\phi(\theta_k)\omega_k.\label{eq_inner_discrete}
	\end{align}
\end{Lemma}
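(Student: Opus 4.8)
The plan is to convert the quadrature error into an interpolation error by inserting the Jacobi interpolant of $v$ and then exploiting the exactness of Gauss quadrature. Write $I_{N,1}^{\alpha,\beta}v$ for the standard (i.e.\ $\lambda=1$) Jacobi interpolant of $v$ at the $N+1$ Gauss nodes $\{\theta_k\}_{k=0}^N$ appearing in \eqref{eq_Interpolation_operator_sum}. Two elementary facts will drive the argument. First, the $(N+1)$-point Jacobi--Gauss quadrature is exact for every polynomial of degree at most $2N+1$; since $\phi\in\mathcal{P}_N^1(I)$ and $I_{N,1}^{\alpha,\beta}v\in\mathcal{P}_N^1(I)$, the product $(I_{N,1}^{\alpha,\beta}v)\phi$ has degree at most $2N\le 2N+1$, so
\[
\left(I_{N,1}^{\alpha,\beta}v,\phi\right)_{\omega^{\alpha,\beta,1}}=\left(I_{N,1}^{\alpha,\beta}v,\phi\right)_{N,\omega^{\alpha,\beta,1}}.
\]
Second, by construction $I_{N,1}^{\alpha,\beta}v$ agrees with $v$ at every node $\theta_k$, so the discrete inner product \eqref{eq_inner_discrete}, which only sees nodal values, cannot distinguish them:
\[
\left(I_{N,1}^{\alpha,\beta}v,\phi\right)_{N,\omega^{\alpha,\beta,1}}=\left(v,\phi\right)_{N,\omega^{\alpha,\beta,1}}.
\]

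Combining these two identities eliminates the purely discrete term and collapses the error onto the interpolation residual:
\[
\left(v,\phi\right)_{\omega^{\alpha,\beta,1}}-\left(v,\phi\right)_{N,\omega^{\alpha,\beta,1}}=\left(v,\phi\right)_{\omega^{\alpha,\beta,1}}-\left(I_{N,1}^{\alpha,\beta}v,\phi\right)_{\omega^{\alpha,\beta,1}}=\left(v-I_{N,1}^{\alpha,\beta}v,\phi\right)_{\omega^{\alpha,\beta,1}}.
\]
I would then apply the Cauchy--Schwarz inequality in $L^2_{\omega^{\alpha,\beta,1}}(I)$ to obtain
\[
\left|\left(v,\phi\right)_{\omega^{\alpha,\beta,1}}-\left(v,\phi\right)_{N,\omega^{\alpha,\beta,1}}\right|\le\left\|v-I_{N,1}^{\alpha,\beta}v\right\|_{0,\omega^{\alpha,\beta,1}}\|\phi\|_{0,\omega^{\alpha,\beta,1}}.
\]

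To finish, I would invoke the weighted $L^2$ interpolation estimate of Lemma \ref{lemma_Interpolation_L^2} with $\lambda=1$ (so that $v(\theta^{1/\lambda})=v(\theta)$ and $I_{N,\lambda}^{\alpha,\beta}$ reduces to $I_{N,1}^{\alpha,\beta}$), which gives
\[
\left\|v-I_{N,1}^{\alpha,\beta}v\right\|_{0,\omega^{\alpha,\beta,1}}\le C N^{-m}\left\|\partial_\theta^m v\right\|_{0,\omega^{\alpha+m,\beta+m,1}};
\]
substituting this into the previous bound yields exactly the claimed estimate \eqref{eq_con&dis}. The one point requiring genuine care, which I regard as the crux, is the degree bookkeeping guaranteeing $\deg\bigl((I_{N,1}^{\alpha,\beta}v)\phi\bigr)\le 2N+1$, since this is precisely what licenses replacing the discrete inner product by the continuous one; everything else is the standard interpolant-insertion device followed by Cauchy--Schwarz and a known interpolation bound. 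A minor caveat I would reconcile beforehand is the notational mismatch between the weight $\omega^{\alpha,\beta,1}(\theta)=(1-\theta)^\alpha\theta^\beta$ from \eqref{eq_wight_equation} and the integrand $\theta^\alpha(1-\theta)^\beta$ written in \eqref{eq_inner_continue}, but aligning these does not affect the structure of the argument.
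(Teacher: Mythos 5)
Your proposal is correct: the identity $(v,\phi)_{\omega^{\alpha,\beta,1}}-(v,\phi)_{N,\omega^{\alpha,\beta,1}}=\left(v-I_{N,1}^{\alpha,\beta}v,\phi\right)_{\omega^{\alpha,\beta,1}}$ via Gauss-quadrature exactness for degree $\le 2N+1$ and nodal agreement of the interpolant, followed by Cauchy--Schwarz and Lemma \ref{lemma_Interpolation_L^2} with $\lambda=1$, is exactly the standard argument. Note that the paper itself offers no proof of this lemma --- it is quoted from \cite{Muntz} --- and your argument coincides with the one in that reference, including the correct observation that the weight discrepancy between \eqref{eq_wight_equation} and \eqref{eq_inner_continue} is a typographical inconsistency in the paper rather than a mathematical obstacle.
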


\begin{Lemma}\label{lemma_E6E7_L^2}$($see $\cite{Muntz} $$)$
For any bounded function $v(\theta)$ defined on $I$, there exists a constant $C$ independent of $v$ such that
	\begin{align}\label{eq_J1J2_L^2}
		\qquad\qquad\qquad\qquad\qquad\qquad
		\sup _N\left\|I_{N, \lambda}^{\alpha, \beta} v\right\|_{0,\omega^{\alpha, \beta, \lambda}} \leq C\|v\|_{\infty} .
	\end{align}	
\end{Lemma}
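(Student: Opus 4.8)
The plan is to reduce the weighted $L^2$ norm of the interpolant to a discrete quadrature sum, exploiting the fact that $I_{N,\lambda}^{\alpha,\beta}v$ is a fractional polynomial of degree $N$ together with the exactness of the fractional Jacobi--Gauss rule on fractional polynomials of degree up to $2N+1$. Since each basis function $F_{j,\lambda}$ is a product of $N$ factors linear in $\theta^{\lambda}$, we have $I_{N,\lambda}^{\alpha,\beta}v\in P_N^{\lambda}(I)$, so its square $\big(I_{N,\lambda}^{\alpha,\beta}v\big)^2$ lies in $P_{2N}^{\lambda}(I)\subset P_{2N+1}^{\lambda}(I)$. The heart of the argument is therefore to justify
\begin{align}
\left\|I_{N,\lambda}^{\alpha,\beta}v\right\|_{0,\omega^{\alpha,\beta,\lambda}}^2
=\int_0^1\big(I_{N,\lambda}^{\alpha,\beta}v(\theta)\big)^2\,\omega^{\alpha,\beta,\lambda}(\theta)\,d\theta
=\sum_{j=0}^N\big(I_{N,\lambda}^{\alpha,\beta}v(\theta_j)\big)^2\,\omega_j,\notag
\end{align}
where the last equality is the quadrature exactness.

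First I would establish this exactness. Under the substitution $z=\theta^{\lambda}$ one checks directly, using the definition \eqref{eq_wight_equation}, that $\omega^{\alpha,\beta,\lambda}(\theta)\,d\theta=(1-z)^{\alpha}z^{\beta}\,dz$, and that any $p\in P_{2N+1}^{\lambda}(I)$ becomes an ordinary polynomial $p(z^{1/\lambda})$ of degree $\le 2N+1$ in $z$. Pushing further with $t=2z-1$ turns the integral into the standard Jacobi integral on $[-1,1]$ against $(1-t)^{\alpha}(1+t)^{\beta}$, for which the $(N+1)$-point Gauss rule is exact up to degree $2N+1$; translating nodes and weights back through $\theta_j=\big((t_j+1)/2\big)^{1/\lambda}$ and $\omega_j=2^{-(\alpha+\beta+1)}w_j$, the correspondence recorded in Section~\ref{section_Preliminaries}, yields exactness of the fractional rule on $P_{2N+1}^{\lambda}(I)$.

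Next I would invoke the interpolation property $I_{N,\lambda}^{\alpha,\beta}v(\theta_j)=v(\theta_j)$, which follows from $F_{j,\lambda}(\theta_i)=\delta_{ij}$, to replace the interpolant at the nodes by $v$ itself, obtaining
\begin{align}
\left\|I_{N,\lambda}^{\alpha,\beta}v\right\|_{0,\omega^{\alpha,\beta,\lambda}}^2
=\sum_{j=0}^N v(\theta_j)^2\,\omega_j
\le\|v\|_{\infty}^2\sum_{j=0}^N\omega_j.\notag
\end{align}
Applying the same exactness to the constant function $1\in P_{2N+1}^{\lambda}(I)$ gives $\sum_{j=0}^N\omega_j=\int_0^1\omega^{\alpha,\beta,\lambda}(\theta)\,d\theta=B(\alpha+1,\beta+1)$, a finite constant independent of $N$ (here $\alpha,\beta>-1$ is precisely what guarantees integrability). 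Taking square roots then yields $\left\|I_{N,\lambda}^{\alpha,\beta}v\right\|_{0,\omega^{\alpha,\beta,\lambda}}\le C\|v\|_{\infty}$ uniformly in $N$, which is the claimed bound after taking the supremum over $N$.

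The main obstacle is the first step: one must be careful that the exactness degree genuinely covers the square of the interpolant, because the fractional exponents make a naive ``degree $2N$'' count suspect. The crux is the substitution $z=\theta^{\lambda}$, which simultaneously linearises the weight into the classical Jacobi weight and converts the fractional polynomial of degree $2N$ into a bona fide algebraic polynomial of degree $2N$, to which the classical $(N+1)$-point exactness up to degree $2N+1$ applies with room to spare. Everything else is a routine transcription of the standard Jacobi interpolation stability estimate through this change of variable.
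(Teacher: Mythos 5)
Your proof is correct: the reduction of the weighted $L^2$ norm of the interpolant to the quadrature sum via exactness of the $(N+1)$-point fractional Jacobi--Gauss rule on $P_{2N+1}^{\lambda}(I)$ (through the substitution $z=\theta^{\lambda}$), followed by the node-value identification $I_{N,\lambda}^{\alpha,\beta}v(\theta_j)=v(\theta_j)$ and the bound $\sum_j\omega_j=B(\alpha+1,\beta+1)$, is exactly the standard stability argument. The paper itself states this lemma without proof, deferring to the cited reference \cite{Muntz}, and your argument is essentially the proof given there, so there is nothing to reconcile.
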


\section{Numerical Scheme} \label{section_Numerical Scheme}
The generalized interpolation operator $I_{N, \lambda}^{\alpha, \beta}$ defined in \eqref{eq_Interpolation_operator_sum} satisfies
	$$
	\begin{aligned}
		I_{N, \lambda}^{\alpha, \beta}v\left(\theta_i\right)=v\left(\theta_i\right),0 \leq i \leq N,\notag
	\end{aligned}
	$$
Rewrite \eqref{eq_prime} and \eqref{eq_origin} through the change of variable $\tau=\varepsilon s$ as follows:
	\begin{align}
		&y^{\prime}(t)= a_1{(t)} y(t)+b_1{(t)} y(\varepsilon t)+f_1{(t)}+\int_0^t(t-s)^{-\mu} K_1(t, s) y(s) d s \notag\\ 
		&\qquad\quad+\varepsilon^{1-\mu}\int_0^t(t-s)^{-\mu}  K_2(t, \varepsilon s) y(\varepsilon s) d s,\label{eq_remark_2}\\
		&y\left(0\right)=y_0.
	\end{align}
After making variable transformations $t=T\theta$ and $s=T\eta$ , \eqref{eq_origin} and \eqref{eq_remark_2} become
	\begin{align}
		&\varphi^{\prime}(\theta)=\tilde{a}_1(\theta) \varphi(\theta)+\tilde{b}_1(\theta)\varphi(\varepsilon \theta)+\tilde{f}_1(\theta)+  \int_0^\theta(\theta-\eta)^{-\mu}  \bar{K}_1(\theta, \eta) \varphi(\eta) d \eta \notag\\
		&\quad\qquad+ \int_0^\theta(\theta-\eta)^{-\mu}\bar{K}_2(\theta, \varepsilon \eta)\varphi(\varepsilon \eta) d \eta\label{eq_phi_prime}\\
		&\varphi(0)=\varphi_0=y_0\label{eq_phi_origin}.
	\end{align}
where
	\begin{align}
		&\tilde{a}_1(\theta)=Ta_1(T\theta),\tilde{b}_1(\theta)=Tb_1(T\theta),\tilde{f}_1(\theta)=Tf_1(T\theta), \varphi(\theta)=y(T\theta),\notag\\
		&\bar{K}_1(\theta, \eta)=T^{2-\mu}K_1(T\theta,T\eta),\bar{K}_2(\theta, \varepsilon \eta)=\varepsilon^{1-\mu}T^{2-\mu}K_2(T\theta, T\varepsilon \eta).\notag
	\end{align}
\eqref{eq_phi_origin} can be transferred into integral form:
\begin{align}
	&\varphi(\theta)=\varphi_0+\int_0^{\theta}\varphi^{\prime}(\eta)d\eta.\label{eq_initial_integral}
\end{align}
\eqref{eq_phi_prime} and \eqref{eq_initial_integral} hold at the collocation points $\lbrace \theta_i \rbrace_{i=0}^{N}$:
\begin{align}
	&\varphi^{\prime}(\theta_i)=\tilde{a}_1(\theta_i) \varphi(\theta_i)+\tilde{f}_1(\theta_i)\varphi(\varepsilon \theta_i)+\tilde{f}_1(\theta_i)+  \int_0^{\theta_i}(\theta_i-\eta)^{-\mu}  \bar{K}_1(\theta_i, \eta) \varphi(\eta) d \eta ,\notag\\
	&\quad\qquad+  \int_0^{\theta_i}(\theta_i-\eta)^{-\mu}\bar{K}_2(\theta_i, \varepsilon \eta)(\varepsilon \eta) d \eta,\label{eq_thetai_collocation_prime}\\
	&\varphi(\theta_i)=\varphi_0+\int_0^{\theta_i}\varphi^{\prime}(\eta)d\eta,\label{eq_thetai_collocation_origin}\\
	&\varphi(\varepsilon\theta_i)=\varphi_0+\varepsilon\int_0^{\theta_i}\varphi^{\prime}(\varepsilon\eta)d\eta.\label{eq_thetai_collocation_eorigin}
\end{align}\\
In order to transfer the integration interval from $[0,\theta_i]$ to $[0,1]$ for analysing easily, we make the variable change $\eta=\theta_i\xi^{\frac{1}{\lambda}}= \eta_i(\xi)$:
	\begin{align}
		&\varphi^{\prime}(\theta_i)=\tilde{a}_1(\theta_i) \varphi(\theta_i)+\tilde{b}_1(\theta_i)\varphi(\varepsilon \theta_i)+\tilde{f}_1(\theta_i)+
		\int_0^1(1-\xi)^{-\mu}\xi^{\frac{1}{\lambda}-1}\widetilde{K}_1(\theta_i, \eta_i(\xi))\varphi( \eta_i(\xi))d\xi\notag\\
		&\quad\qquad+\int_0^1(1-\xi)^{-\mu}\xi^{\frac{1}{\lambda}-1}\widetilde{K}_2(\theta_i,\varepsilon \eta_i(\xi))\varphi(\varepsilon \eta_i(\xi))d\xi,\label{eq_collocation_prime}\\
		&\varphi\left(\theta_i\right)=\varphi_0+\int_0^{\theta_i} \varphi^{\prime}(\eta) d\eta\notag \\
		&\quad\qquad=\varphi_0+\frac{\theta_i}{\lambda} \int_0^1 \xi^{\frac{1}{\lambda}-1} \varphi^{\prime}\left( \eta_i(\xi)\right) d \xi\label{eq_collocation_origin},\\
		&\varphi\left(\varepsilon\theta_i\right)=\varphi_0+\frac{\varepsilon\theta_i}{\lambda} \int_0^1 \xi^{\frac{1}{\lambda}-1} \varphi^{\prime}\left( \varepsilon\eta_i(\xi)\right) d \xi .\label{eq_collocation_eorigin}
	\end{align}\\
where
	\begin{align}
		& \widetilde{K}_1\left(\theta_i,  \eta_i(\xi)\right)
		=\frac{1}{\lambda}\theta_{i}^{1-\mu} \frac{\left(1-\xi^{\frac{1}{\lambda}}\right)^{-\mu}}{(1-\xi)^{-\mu}}\bar{K}_1\left(\theta_i,  \eta_i(\xi)\right),\notag\\
		& \widetilde{K}_2\left(\theta_i, \varepsilon \eta_i(\xi)\right)
		=\frac{1}{\lambda}\theta_{i}^{1-\mu} \frac{\left(1-\xi^{\frac{1}{\lambda}}\right)^{-\mu}}{(1-\xi)^{-\mu}}\bar{K}_2\left(\theta_i,\varepsilon \eta_i(\xi)\right).\notag
	\end{align}
	
By using the $(N+1)$-point fractional $Jacobi-Gauss$ quadrature formula, the two integral terms in \eqref{eq_collocation_prime}. When $\alpha=-\mu,\beta=\frac{1}{\lambda}-1$ are the related parameters, we denote nodes $\lbrace \xi_k\rbrace_{k=0}^{N}$ and the weights $\lbrace \omega_k\rbrace_{k=0}^{N}$.
For \eqref{eq_collocation_origin} and \eqref{eq_collocation_eorigin}, we denote $\lbrace \hat{\xi}_k\rbrace_{k=0}^{N}$ and $\lbrace \hat{\omega}_k\rbrace_{k=0}^{N}$ as fractional $Jacobi-Gauss$ quadrature nodes and weights when $\alpha=0,\beta=\frac{1}{\lambda}-1$, then we will obtain
	\begin{align}
	&\int_0^1(1-\xi)^{-\mu}\xi^{\frac{1}{\lambda}-1}\widetilde{K}_1(\theta_i, \eta_i(\xi))\varphi( \eta_i(\xi))d\xi \approx \sum_{k=0}^N \widetilde{K}_1\left(\theta_i,  \eta_i\left(\xi_k\right)\right) \varphi\left( \eta_i\left(\xi_k\right)\right) \omega_k\label{eq_discrete_K1},\\
	&\int_0^1(1-\xi)^{-\mu}\xi^{\frac{1}{\lambda}-1}\widetilde{K}_2(\theta_i,\varepsilon \eta_i(\xi))\varphi(\varepsilon \eta_i(\xi))d\xi \approx \sum_{k=0}^N \widetilde{K}_2\left(\theta_i, \varepsilon  \eta_i\left(\xi_k\right)\right) \varphi\left(\varepsilon  \eta_i\left(\xi_k\right)\right) \omega_k\label{eq_discrete_K2},\\
	&\frac{\theta_i}{\lambda} \int_0^1 \xi^{\frac{1}{\lambda}-1} \varphi^{\prime}\left( \eta_i(\xi)\right) d \xi\approx
	\sum_{k=0}^{N}\frac{\theta_i}{\lambda}\varphi^{\prime}\left( \eta_i(\hat{\xi}_k)\right)\hat{\omega}_k\label{eq_discrete_origin},\\
	&\frac{\varepsilon\theta_i}{\lambda} \int_0^1 \xi^{\frac{1}{\lambda}-1} \varphi^{\prime}\left(\varepsilon \eta_i(\xi)\right) d \xi\approx
	\sum_{k=0}^{N}\frac{\varepsilon\theta_i}{\lambda}\varphi^{\prime}\left(\varepsilon \eta_i(\hat{\xi}_k)\right)\hat{\omega}_k\label{eq_discrete_eorigin}.
	\end{align}
We denote it as:
	\begin{align}\label{eq_numerical solutions}
		\varphi^{\prime}\left(\theta\right)\approx \varphi_{N}^{\ast}\left(\theta\right)=\sum_{j=0}^{N}\varphi_j^{\ast}F_{j,\lambda}(\theta),
		\varphi\left(\theta\right)\approx \varphi_{N}\left(\theta\right)=\sum_{j=0}^{N}\varphi_jF_{j,\lambda}(\theta).
	\end{align}
where $\varphi_{N}^{\ast}\left(\theta\right)$ is not the exact derive of $\varphi_{N}\left(\theta\right)$. $\varphi\left(\theta\right),\varphi^{\prime}(\theta)$ are approxiated by the generalized $Lagrange$ interpolation polynomials according to \eqref{eq_Interpolation_operator_sum}.

Then we define $\varphi_i,\varphi_i^{\ast}$ and $v_i$ as approxiamations of $\varphi\left(\theta_i\right),\varphi^{\prime}(\theta_i)$ and $\varphi\left(\varepsilon\theta_i\right)$.
The fractional $Jacobi$ collocation method tells us to seek $\lbrace \varphi_i\rbrace_{i=0}^{N},\lbrace \varphi_i^{\ast}\rbrace_{i=0}^{N}$ and $\lbrace v_i\rbrace_{i=0}^{N}$,  holding at the following equations:
	\begin{align}
		&\varphi_i^{\ast}=\tilde{a}_1\left(\theta_i\right) \varphi_i+\tilde{b}_1\left(\theta_i\right) v_i  +\tilde{f}_1(\theta_i)+\sum_{j=0}^N \varphi_j \left(\sum_{k=0}^N \widetilde{K}_1\left(\theta_i,  \eta_i\left(\xi_k\right)\right) F_{j,\lambda}\left( \eta_i\left(\xi_k\right)\right) \omega_k\right)\notag\\
		& \qquad+\sum_{j=0}^N \varphi_j\left(\sum_{k=0}^N \widetilde{K}_2\left(\theta_i, \varepsilon \eta_i\left(\xi_k\right)\right) F_{j,\lambda}\left(\varepsilon \eta_i\left(\xi_k\right)\right) \omega_k\right),\label{eq_collocation*_prime}\\
		&\varphi_i=\varphi_0+\sum_{j=0}^N  \varphi_j^*\left(\sum_{k=0}^N\frac{\theta_i}{\lambda} F_{j,\lambda}\left( \eta_i\left(\hat{\xi}_k\right)\right) \hat{\omega}_k\right),\label{eq_collocation*_origin}\\
		&v_i=\varphi_0+ \sum_{j=0}^N\varphi_j^*\left(\sum_{k=0}^N\frac{\varepsilon\theta_i}{\lambda}F_{j,\lambda}\left(\varepsilon \eta_i\left(\hat{\xi}_k\right)\right) \hat{\omega}_k\right)\label{eq_collocation*_eorigin}.
	\end{align}
We define $U_N^{\ast}=\lbrace\varphi_0^{\ast},\varphi_1^{\ast},\cdots,\varphi_N^{\ast}\rbrace^{T},
U_N=\lbrace\varphi_0,\varphi_1,\cdots,\varphi_N\rbrace^{T},
V_N=\lbrace v_0,v_1,\cdots,v_N\rbrace^{T}
$ so as to obtain the matrix form of \eqref{eq_collocation*_prime}-\eqref{eq_collocation*_eorigin}
	\begin{align}
		&U_N^{\ast}=(A+C+D)U_N+BV_N+F\label{eq_collocation*_matrix_prime},\\
		&U_N=U_0+EU_N^{\ast}\label{eq_collocation*_matrix_origin},\\
		&V_N=U_0+HU_N^{\ast}\label{eq_collocation*_matrix_eorigin}.
	\end{align}
where $A=diag\lbrace\tilde{a}_1\left(\theta_0\right),\tilde{a}_1\left(\theta_1\right),\cdots,\tilde{a}_1\left(\theta_N\right)\rbrace, B=diag\lbrace\tilde{b}_1\left(\theta_0\right),\tilde{b}_1\left(\theta_1\right),\cdots,\tilde{b}_1\left(\theta_N\right)\rbrace,F=\\ \lbrace\tilde{f}_1\left(\theta_0\right),\tilde{f}_1\left(\theta_1\right),\cdots,\tilde{f}_1\left(\theta_N\right)\rbrace^{T},U_0=\lbrace\varphi_0,\varphi_0,\cdots,\varphi_0\rbrace^{T}$. The matrix elements of $C,D,E,H$ are as follows:
	\begin{align}
		&C_{ij}=\sum_{k=0}^N \widetilde{K}_1\left(\theta_i,  \eta_i\left(\xi_k\right)\right) F_{j,\lambda}\left( \eta_i\left(\xi_k\right)\right) \omega_k,\notag\\
		&D_{ij}=\sum_{k=0}^N \widetilde{K}_2\left(\theta_i, \varepsilon \eta_i\left(\xi_k\right)\right) F_{j,\lambda}\left(\varepsilon \eta_i\left(\xi_k\right)\right)\omega_k,\notag\\
		&E_{ij}=\sum_{k=0}^N\frac{\theta_i}{\lambda} F_{j,\lambda}\left( \eta_i\left(\hat{\xi}_k\right)\right) \hat{\omega}_k,\notag\\
		&H_{ij}=\sum_{k=0}^N \frac{\varepsilon\theta_i}{\lambda}F_{j,\lambda}\left(\varepsilon \eta_i\left(\hat{\xi}_k\right)\right) \hat{\omega}_k.\notag
	\end{align}
We can calculate the values of  $\lbrace \varphi_i\rbrace_{i=0}^{N}$ and $\lbrace \varphi_i^{\ast}\rbrace_{i=0}^{N}$ by solving the system of \eqref{eq_collocation*_matrix_prime}-\eqref{eq_collocation*_matrix_eorigin}, then the numerical solutions can be derived through \eqref{eq_numerical solutions} accordingly.
\begin{remark}\label{remark_integral_equal}
	Since $F_{j,\lambda}(\theta)(j=0,1,\cdots,N)$ are fractional Jacobi polynomials of degree not exceeding $N$, $F_{j,\lambda}( \eta_i(\xi))$ are $j$-th Jacobi polynomials with respect to $\xi$ due to the definition of $F_{j,\lambda}( \theta)$. Beseides, we have $\varphi_N\left( \eta_i\left(\xi\right)\right)=\sum_{j=0}^{N}{\varphi_j}^{\ast}F_{j,\lambda}\left( \eta_i\left(\xi\right)\right)\in \mathcal{P}_N^1$. Thus, there is a relationship between the following integral and quadrature formula:
	\begin{align}
		\int_0^{\theta_i} \varphi_{N}^{\ast}(\eta) d\eta
		=&\int_0^{\theta_i}\sum_{j=0}^N\varphi_j^{\ast}F_{j,\lambda}(\eta)d\eta
		=\frac{\theta_i}{\lambda}\int_0^{1}\xi^{\frac{1}{\lambda}-1}\sum_{j=0}^N\varphi_j^{\ast}F_{j,\lambda}\left( \eta_i(\xi)\right)d\xi\notag\\
		=&\sum_{j=0}^N  \varphi_j^*\left(\sum_{k=0}^N\frac{\theta_i}{\lambda} F_{j,\lambda}\left( \eta_i\left(\hat{\xi}_k\right)\right) \hat{\omega}_k\right)
		=\left(\frac{\theta_i}{\lambda}, \varphi_N^{\ast}\left( \eta_i(\cdot)\right)\right) _{N,\omega^{0, \frac{1}{\lambda}-1,1}},\label{eq_integral_equal}
	\end{align}
 We can infer the case with $\varepsilon$ similarly.
\end{remark}

\section{Convergence Analysis}\label{section_Convergence Analysis}
In this section, we will continue to demonstrate the error bounds under $L^{\infty}$ and the weighted  $L^2$-norm of the numerical scheme in Section \ref{section_Numerical Scheme}. Then we will prove the proposed approximations have the rate of exponential convergence,  i.e., the spectral accuracy can be exhibited. \\
Derived from \eqref{eq_inner_continue} and Remark \ref{remark_integral_equal}, rewrite \eqref{eq_collocation_prime}-\eqref{eq_collocation_eorigin} into the form of continuous inner products
		\begin{align}
		&\varphi^{\prime}(\theta_i)=\tilde{a}_1(\theta_i) \varphi(\theta_i)+\tilde{b}_1(\theta_i)\varphi(\varepsilon \theta_i)+\tilde{f}_1(\theta_i)+
		\left(\widetilde{K}_1(\theta_i, \eta_i(\cdot)), \varphi\left( \eta_i(\cdot)\right)\right)_ {\omega^{-\mu,\frac{1}{\lambda}-1,1}}\notag\\
		&\qquad\quad+	\left(\widetilde{K}_2(\theta_i,\varepsilon \eta_i(\cdot)), \varphi\left(\varepsilon \eta_i(\cdot)\right)\right)_ {\omega^{-\mu,\frac{1}{\lambda}-1,1}},\label{eq_inner_continue_prime}\\
		&\varphi\left(\theta_i\right)=\varphi_0+\left(\frac{\theta_i}{\lambda}, \varphi^{\prime}\left( \eta_i(\cdot)\right)\right) _{\omega^{0, \frac{1}{\lambda}-1,1}}\label{eq_inner_continue_origin},\\
		&\varphi\left(\varepsilon\theta_i\right)=\varphi_0+\left(\frac{\varepsilon\theta_i}{\lambda}, \varphi^{\prime}\left(\varepsilon \eta_i(\cdot)\right)\right) _{\omega^{0, \frac{1}{\lambda}-1,1}}.\label{eq_inner_continue_eorigin}
	\end{align}
Based on \eqref{eq_inner_discrete}, \eqref{eq_collocation*_prime}-\eqref{eq_collocation*_eorigin} can be transformed into numerical quadrature inner products
		\begin{align}
		&\varphi_i^{\ast}=\tilde{a}_1\left(\theta_i\right) \varphi_i+\tilde{b}_1\left(\theta_i\right) v_i  +\tilde{f}_1(\theta_i)+	\left(\widetilde{K}_1(\theta_i, \eta_i(\cdot)), \varphi_N\left( \eta_i(\cdot)\right)\right)_ {N,\omega^{-\mu,\frac{1}{\lambda}-1,1}}\notag\\
		&\qquad+\left(\widetilde{K}_2(\theta_i,\varepsilon \eta_i(\cdot)), \varphi_N\left(\varepsilon \eta_i(\cdot)\right)\right)_ {N,\omega^{-\mu,\frac{1}{\lambda}-1,1}},\label{eq_inner_discrete_prime}\\
		&\varphi_i=\varphi_0+\left(\frac{\theta_i}{\lambda}, \varphi_N^{\ast}\left( \eta_i(\cdot)\right)\right) _{N,\omega^{0, \frac{1}{\lambda}-1,1}},\label{eq_inner_discrete_origin}\\
		&v_i=\varphi_0+ \left(\frac{\varepsilon\theta_i}{\lambda}, \varphi_{N}^{\ast}\left(\varepsilon \eta_i(\cdot)\right)\right) _{N,\omega^{0, \frac{1}{\lambda}-1,1}} .\label{eq_inner_discrete_eorigin}
	\end{align}
Add on both sides of the equation \eqref{eq_inner_discrete_prime} by 
	\begin{align}
	\int_0^{\theta_i}(\theta_i-\eta)^{-\mu}\bar{K}_1(\theta_i, \eta) \varphi_N(\eta) d \eta=\left(\widetilde{K}_1(\theta_i, \eta_i(\cdot)), \varphi_N\left( \eta_i(\cdot)\right)\right)_ {\omega^{-\mu,\frac{1}{\lambda}-1,1}},
	\end{align}
	and 
	\begin{align}
		 \int_0^{\theta_i}(\theta_i-\eta)^{-\mu}\bar{K}_2(\theta_i, \varepsilon \eta)\varphi_N(\varepsilon \eta) d \eta=\left(\widetilde{K}_2(\theta_i,\varepsilon \eta_i(\cdot)), \varphi_N\left(\varepsilon \eta_i(\cdot)\right)\right)_ {\omega^{-\mu,\frac{1}{\lambda}-1,1}},
	\end{align}
as previously derived.
By virtue of Remark \ref{remark_integral_equal}, \eqref{eq_inner_discrete_prime}, \eqref{eq_inner_discrete_origin} and \eqref{eq_inner_discrete_eorigin} can be turned into
		\begin{align}
		&\varphi_i^{\ast}=\tilde{a}_1\left(\theta_i\right) \varphi_i+\tilde{b}_1\left(\theta_i\right) v_i
		+\tilde{f}_1(\theta_i)
		+  \int_0^{\theta_i}(\theta_i-\eta)^{-\mu}  \bar{K}_1(\theta_i, \eta) \varphi_N(\eta) d \eta ,\notag\\
		&\qquad+  \int_0^{\theta_i}(\theta_i-\eta)^{-\mu}  \bar{K}_2(\theta_i, \varepsilon \eta)\varphi_N(\varepsilon \eta) d \eta-I_{i,1}-I_{i,2},\label{eq_thetai_numerical_prime}\\
		&\varphi_i=\varphi_0+\int_0^{\theta_i} \varphi_{N}^{\ast}(\eta) d\eta,\label{eq_thetai_numerical_origin}\\
		&v_i=\varphi_0+\varepsilon\int_0^{\theta_i} \varphi_{N}^{\ast}(\varepsilon\eta) d\eta.\label{eq_thetai_numerical_eorigin}
	\end{align}
where we adopt this mathematical notation:
	\begin{align}
		&I_{i,1}=\left(\widetilde{K}_1(\theta_i, \eta_i(\cdot)), \varphi_N\left( \eta_i(\cdot)\right)\right)_ {\omega^{-\mu,\frac{1}{\lambda}-1,1}}-\left(\widetilde{K}_1(\theta_i, \eta_i(\cdot)), \varphi_N\left( \eta_i(\cdot)\right)\right)_ {N,\omega^{-\mu,\frac{1}{\lambda}-1,1}},\label{eq_Ii1}\\
		&I_{i,2}=\left(\widetilde{K}_2(\theta_i,\varepsilon \eta_i(\cdot)), \varphi_N\left(\varepsilon \eta_i(\cdot)\right)\right)_ {\omega^{-\mu,\frac{1}{\lambda}-1,1}}-	\left(\widetilde{K}_2(\theta_i,\varepsilon \eta_i(\cdot)), \varphi_N\left(\varepsilon \eta_i(\cdot)\right)\right)_ {N,\omega^{-\mu,\frac{1}{\lambda}-1,1}}.\label{eq_Ii2}
	\end{align}
When we apply Lemma \ref{lemma_Ii1Ii2}, the bound of their absolute value are
	\begin{align}
		& \left|I_{i, 1}\right| \leq C N^{-m}\left\|\partial_\theta^m \widetilde{K}_1\left(\theta_i,  \eta_i(\cdot)\right)\right\|_ {0,\omega^{m-\mu,{m+\frac{1}{\lambda}}-1,1}}
		\left\|\varphi_N\left( \eta_i\left(\cdot\right)\right)\right\|_ {0,\omega^{-\mu,\frac{1}{\lambda}-1,1}},\label{eq_E1_Ii1_absolute}\\
		& \left|I_{i, 2}\right| \leq C N^{-m}\left\|\partial_\theta^m \widetilde{K}_2\left(\theta_i, \varepsilon \eta_i(\cdot)\right)\right\|_ {0,\omega^{m-\mu,{m+\frac{1}{\lambda}}-1,1}}
		\left\|\varphi_N\left(\varepsilon \eta_i\left(\cdot\right)\right)\right\|_ {0,\omega^{-\mu,\frac{1}{\lambda}-1,1}}.\label{eq_E2_Ii2_absolute}
	\end{align}
After subtracting \eqref{eq_thetai_numerical_prime} from \eqref{eq_thetai_collocation_prime},  \eqref{eq_thetai_numerical_origin} from \eqref{eq_thetai_collocation_origin}, and \eqref{eq_thetai_numerical_eorigin} from \eqref{eq_thetai_collocation_eorigin},  we can deduce the equations for the errors:
	\begin{align}
		&\varphi^{\prime}(\theta_i)-\varphi_i^{\ast}=\tilde{a}_1(\theta_i) \left(\varphi(\theta_i)-\varphi_i\right)+\tilde{b}_1(\theta_i)\left(\varphi(\varepsilon \theta_i)-v_i\right)
		+  \int_0^{\theta_i}(\theta_i-\eta)^{-\mu}  \bar{K}_1(\theta_i, \eta) e(\eta) d \eta\notag\\
		&\qquad\qquad\qquad+  \int_0^{\theta_i}(\theta_i-\eta)^{-\mu}  \bar{K}_2(\theta_i, \varepsilon \eta)e(\varepsilon \eta) d \eta+I_{i,1}+I_{i,2},\label{eq_subtraction1_prime}\\
		&\varphi(\theta_i)-\varphi_i=\int_0^{\theta_i} e^{\ast}(\eta) d\eta,\label{eq_subtraction1_origin}\\
		&\varphi(\varepsilon \theta_i)-v_i=\varepsilon\int_0^{\theta_i} e^{\ast}(\varepsilon\eta) d\eta.\label{eq_subtraction1_eorigin}
	\end{align}
where $e(\theta)=\varphi(\theta)-\varphi_N(\theta),e^{\ast}(\theta)=\varphi^{\prime}(\theta)-\varphi_N^{\ast}(\theta)$. Substituting \eqref{eq_subtraction1_origin} and \eqref{eq_subtraction1_eorigin} into \eqref{eq_subtraction1_prime} can yield:
	\begin{align}
		\varphi^{\prime}(\theta_i)-\varphi_i^{\ast}=&\tilde{a}_1(\theta_i) \int_0^{\theta_i} e^{\ast}(\eta) d\eta+\varepsilon\tilde{b}_1(\theta_i)\int_0^{\theta_i} e^{\ast}(\varepsilon\eta) d\eta\notag\\
		&+  \int_0^{\theta_i}(\theta_i-\eta)^{-\mu}  \bar{K}_1(\theta_i, \eta) e(\eta) d \eta
		+  \int_0^{\theta_i}(\theta_i-\eta)^{-\mu}  \bar{K}_2(\theta_i, \varepsilon \eta)e(\varepsilon \eta) d \eta+I_{i,1}+I_{i,2}.\label{eq_subtraction2_prime}
	\end{align}
After multiplying $F_{i,\lambda}(\theta)$ on both sides of \eqref{eq_subtraction2_prime}, \eqref{eq_subtraction1_origin} and \eqref{eq_subtraction1_eorigin}, and summing up from $i=0$ to $N$, \eqref{eq_subtraction2_prime} will become
	\begin{align}
		I_{N, \lambda}^{\alpha, \beta} \varphi^{\prime}(\theta)-\varphi_N^*(\theta)=
		& I_{N, \lambda}^{\alpha, \beta}\left(\tilde{a}_1\left(\theta\right) \int_0^\theta e^*(\eta) d \eta\right)
		+ \varepsilon I_{N, \lambda}^{\alpha, \beta}\left(\tilde{b}_1\left(\theta\right) \int_0^\theta e^*(\varepsilon \eta) d \eta\right)\notag\\
		& +I_{N, \lambda}^{\alpha, \beta}\left(  \int_0^\theta\left(\theta-\eta\right)^{-\mu}  \bar{K}_1(\theta, \eta) e(\eta) d \eta\right) \notag\\
		&+I_{N, \lambda}^{\alpha, \beta}\left(  \int_0^\theta\left(\theta-\eta\right)^{-\mu}  \bar{K}_2(\theta,\varepsilon \eta)e(\varepsilon \eta) d \eta\right) \notag\\
		&+ \sum_{i=0}^N I_{i, 1} F_{i,\lambda}(\theta)+\sum_{i=0}^N I_{i, 2} F_{i,\lambda}(\theta)\label{eq_sum1_prime}\\
		I_{N, \lambda}^{\alpha \beta} \varphi(\theta)-\varphi_N(\theta)= & I_{N, \lambda}^{\alpha, \beta}\left(\int_0^\theta e^*(\eta) d \eta\right)\label{eq_sum1_origin},\\
		I_{N, \lambda}^{\alpha \beta} \varphi(\varepsilon\theta)-\varphi_N(\varepsilon\theta)= & I_{N, \lambda}^{\alpha, \beta}\left(\varepsilon\int_0^\theta e^*(\varepsilon\eta) d \eta\right)\label{eq_sum1_eorigin}.
	\end{align}
Adding and subtracting $\varphi^{\prime}(\theta)$ to left side of \eqref{eq_sum1_prime}, $\varphi(\theta)$ to left side of \eqref{eq_sum1_origin} and $\varphi(\varepsilon\theta)$ to left side of \eqref{eq_sum1_eorigin} respectively yield the following results:
	\begin{align}
		e^{\ast}(\theta)=
		&\tilde{a}_1\left(\theta\right) \int_0^\theta e^*(\eta) d \eta
		+ \varepsilon \tilde{b}_1\left(\theta\right) \int_0^\theta e^*(\varepsilon \eta) d \eta+  \int_0^\theta\left(\theta-\eta\right)^{-\mu}  \bar{K}_1(\theta, \eta) e(\eta) d \eta \notag\\
		&+  \int_0^\theta\left(\theta-\eta\right)^{-\mu}  \bar{K}_2(\theta,\varepsilon \eta)e(\varepsilon \eta) d\eta
		+\sum_{p=1}^{7}E_p(\theta),\label{eq_sum*_prime}\\
		e(\theta)=&\int_0^\theta e^*(\eta) d \eta+E_8(\theta)+E_9(\theta),\label{eq_sum*_origin}\\
		e(\varepsilon\theta)=&\varepsilon\int_0^\theta e^*(\varepsilon\eta) d \eta+E_8(\varepsilon\theta)+E_9(\varepsilon\theta).\label{eq_sum*_eorigin}
	\end{align}
where
	\begin{align}
		&E_1(\theta)=\sum_{i=0}^N I_{i, 1} F_{i,\lambda}(\theta),  
		E_2(\theta)=\sum_{i=0}^N I_{i, 2} F_{i,\lambda}(\theta),
		E_3(\theta)= \varphi^{\prime}(\theta)-I_{N, \lambda}^{\alpha, \beta} \varphi^{\prime}(\theta),\notag\\
		&E_4(\theta)= I_{N, \lambda}^{\alpha, \beta}\left(\tilde{a}_1\left(\theta\right) \int_0^\theta e^*(\eta) d \eta\right)-\tilde{a}_1\left(\theta\right) \int_0^\theta e^*(\eta) d \eta,\notag\\
		&E_5(\theta)=\varepsilon I_{N, \lambda}^{\alpha, \beta}\left(\tilde{b}_1\left(\theta\right) \int_0^\theta e^*(\varepsilon\eta) d \eta\right)- \varepsilon \tilde{b}_1\left(\theta\right) \int_0^\theta e^*(\varepsilon \eta) d \eta,\notag\\
	\end{align}
	\begin{align}
		&E_6(\theta)=I_{N, \lambda}^{\alpha, \beta}\left(  \int_0^\theta\left(\theta-\eta\right)^{-\mu}  \bar{K}_1(\theta, \eta) e(\eta) d \eta\right)
		-  \int_0^\theta\left(\theta-\eta\right)^{-\mu}  \bar{K}_1(\theta, \eta) e(\eta) d \eta ,\notag\\
		&E_7(\theta)=I_{N, \lambda}^{\alpha, \beta}\left(  \int_0^\theta\left(\theta-\eta\right)^{-\mu}  \bar{K}_2(\theta,\varepsilon \eta)e(\varepsilon \eta) d \eta\right)
		-  \int_0^\theta\left(\theta-\eta\right)^{-\mu}  \bar{K}_2(\theta,\varepsilon \eta)e(\varepsilon \eta) d\eta,\notag\\
		&E_8(\theta)=\varphi(\theta)-I_{N, \lambda}^{\alpha \beta} \varphi(\theta),
		E_9(\theta)= I_{N, \lambda}^{\alpha, \beta}\left(\int_0^\theta e^*(\eta) d \eta\right)-\int_0^\theta e^*(\eta) d \eta.\notag
	\end{align}
\begin{theorem}\label{theorem_Gronwall}
The  error of $e^{\ast}(\theta)$ and $e(\theta)$ can be bounded by $E_p(\theta)(p=1,2,\cdots,9)$, the specific forms are as follows:
	\begin{align}
		\|{e}^{*}(\theta)\|_{\infty} \leq C\sum_{p=1}^9\left\|E_p(\theta)\right\|_{\infty},\label{theorem_normcontrol_e*_infty}\\
		\|e(\theta)\|_{\infty} \leq C \sum_{p=1}^9\left\|E_p(\theta)\right\|_{\infty}.\label{theorem_normcontrol_e_infty}
	\end{align}
	\begin{align}
		\left\|e^*(\theta)\right\|_{0,\omega^{\alpha,\beta,1}} 
		&\leq C\left(\sum_{p=1}^{9}\left\|E_p(\theta)\right\|_{0,\omega^{\alpha,\beta,1}}+\left\|E_8(\theta)\right\|_{\infty}+\left\|E_9(\theta)\right\|_{\infty} \right)\label{theorem_normcontrol_e*_w}\\
		\left\|e(\theta)\right\|_{0,\omega^{\alpha,\beta,1}}  &\leq C
		\left(\left\|e^*(\theta)\right\|_{0,\omega^{\alpha,\beta,1}}+\left\|E_8(\theta)\right\|_{0,\omega^{\alpha,\beta,1}}+\left\|E_9(\theta)\right\|_{0,\omega^{\alpha,\beta,1}} \right)\label{theorem_normcontrol_e_w}
	\end{align}
where $e(\theta)=\varphi(\theta)-\varphi_N(\theta),e^{\ast}(\theta)=\varphi^{\prime}(\theta)-\varphi_N^{\ast}(\theta)$. 
\end{theorem}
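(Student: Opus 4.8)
The plan is to eliminate $e$ and $e(\varepsilon\,\cdot)$ from the error identity \eqref{eq_sum*_prime} in favour of $e^*$, reduce everything to a single scalar Volterra inequality in $e^*$, and then invoke the Gronwall-type estimates of Lemmas \ref{lemma_Gronwall} and \ref{lemma_Chen_Lemma3.5_norm}. Concretely, I would first substitute the representations \eqref{eq_sum*_origin} and \eqref{eq_sum*_eorigin}, written for a general argument as $e(x)=\int_0^x e^*(s)\,\mathrm{d}s+E_8(x)+E_9(x)$, into the two weakly singular integrals on the right of \eqref{eq_sum*_prime}. Besides the terms $\sum_{p=1}^7 E_p$, this produces two groups: integrals of the primitive $g(\eta):=\int_0^\eta e^*(s)\,\mathrm{d}s$ against the kernels, and the kernels applied to $E_8+E_9$ (and their $\varepsilon$-scaled versions).

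Next I would bound every term that is linear in $e^*$ pointwise by $C\int_0^\theta|e^*(\eta)|\,\mathrm{d}\eta$. For the smooth part this is immediate, since $\tilde a_1,\tilde b_1$ are bounded and, after the substitution $\eta\mapsto\varepsilon\eta$, $\varepsilon\tilde b_1(\theta)\int_0^\theta e^*(\varepsilon\eta)\,\mathrm{d}\eta=\tilde b_1(\theta)\int_0^{\varepsilon\theta}e^*(s)\,\mathrm{d}s$ with $\varepsilon\theta\le\theta$. For the weakly singular part I would use Fubini: $\int_0^\theta(\theta-\eta)^{-\mu}\bar K_1(\theta,\eta)g(\eta)\,\mathrm{d}\eta$ is dominated by $C\int_0^\theta|e^*(s)|\bigl(\int_s^\theta(\theta-\eta)^{-\mu}\,\mathrm{d}\eta\bigr)\,\mathrm{d}s\le C\int_0^\theta|e^*(s)|\,\mathrm{d}s$, since $(\theta-s)^{1-\mu}$ is bounded on $I$; the $\mathcal K_2$ term is handled identically after the delay substitution (using $\varepsilon\eta\le\eta$). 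The remaining kernel contributions $\mathcal K_i(E_8+E_9)$ are controlled by Lemma \ref{lemma_J6org} (equivalently Lemma \ref{lemma_J6}), which gives $\|\mathcal K_i v\|_{0,\kappa}\le C\|v\|_\infty$ and hence $\|\mathcal K_i(E_8+E_9)\|_\infty\le C(\|E_8\|_\infty+\|E_9\|_\infty)$. Collecting these estimates yields the scalar Volterra inequality $|e^*(\theta)|\le C\int_0^\theta|e^*(\eta)|\,\mathrm{d}\eta+J(\theta)$, where $J$ gathers $\sum_{p=1}^7|E_p|$ and the $\mathcal K_i(E_8+E_9)$ terms.

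From this inequality the four stated bounds follow by choosing the norm. Applying Lemma \ref{lemma_Chen_Lemma3.5_norm} in the $L^\infty$ norm, together with $\|\mathcal K_i(E_8+E_9)\|_\infty\le C(\|E_8\|_\infty+\|E_9\|_\infty)$, gives $\|e^*\|_\infty\le C\sum_{p=1}^9\|E_p\|_\infty$, which is \eqref{theorem_normcontrol_e*_infty}; then \eqref{theorem_normcontrol_e_infty} follows directly from \eqref{eq_sum*_origin} since $|\int_0^\theta e^*|\le\|e^*\|_\infty$. Applying Lemma \ref{lemma_Chen_Lemma3.5_norm} in the weighted $L^2_{\omega^{\alpha,\beta,1}}$ norm gives $\|e^*\|_{0,\omega^{\alpha,\beta,1}}\le C\|J\|_{0,\omega^{\alpha,\beta,1}}$; here the $\sum_{p=1}^7|E_p|$ part is estimated in the weighted norm directly, whereas the $\mathcal K_i(E_8+E_9)$ part must be passed to the $L^\infty$ norm (via $\|\cdot\|_{0,\omega^{\alpha,\beta,1}}\le C\|\cdot\|_\infty$ on the finite interval together with Lemma \ref{lemma_J6org}), which is exactly why the extra terms $\|E_8\|_\infty+\|E_9\|_\infty$ appear in \eqref{theorem_normcontrol_e*_w}. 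Finally \eqref{theorem_normcontrol_e_w} comes from taking the weighted $L^2$ norm of \eqref{eq_sum*_origin} and bounding $\|\int_0^\theta e^*(\eta)\,\mathrm{d}\eta\|_{0,\omega^{\alpha,\beta,1}}\le C\|e^*\|_{0,\omega^{\alpha,\beta,1}}$ by the generalized Hardy inequality of Lemma \ref{lemma_Hardy}.

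I expect the main obstacle to be the weakly singular terms: getting the double weakly singular integral into the Gronwall-ready form $C\int_0^\theta|e^*|$ cleanly (the Fubini step and the boundedness of $(\theta-s)^{1-\mu}$), and, in the weighted $L^2$ estimate, correctly verifying the Hardy weight condition for the relevant Jacobi parameters while keeping track of why $E_8$ and $E_9$ must be measured in the $L^\infty$ rather than the weighted norm. The delay bookkeeping (the $\varepsilon$-scalings and confirming $\varepsilon\theta\le\theta$ so that all integrals stay over $[0,\theta]$) is routine but must be carried out carefully to preserve the Volterra structure needed for the Gronwall argument.
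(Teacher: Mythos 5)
Your proposal is correct and follows essentially the same route as the paper: substitute \eqref{eq_sum*_origin}--\eqref{eq_sum*_eorigin} into \eqref{eq_sum*_prime}, use Fubini to rewrite the weakly singular double integrals as single integrals against $e^*$ with kernel brackets bounded via the Beta function, absorb the kernel images of $E_8+E_9$ into $J(\theta)$, and then apply Lemma \ref{lemma_Chen_Lemma3.5_norm} in the $L^\infty$ and weighted $L^2$ norms together with Lemma \ref{lemma_Hardy} for \eqref{theorem_normcontrol_e_w}. The only cosmetic difference is that you invoke Lemma \ref{lemma_J6org} to control $\|\mathcal{K}_i(E_8+E_9)\|_\infty$, where the paper uses the Beta-function bound directly; both give the same estimate.
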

\begin{proof}
After substituting \eqref{eq_sum*_origin} and \eqref{eq_sum*_eorigin} into \eqref{eq_sum*_prime}, we can obtain an inequality containing the errors $e^{\ast}(\theta)$ and $e(\theta)$:
	\begin{align}
		e^*(\theta)= & \tilde{a}_1(\theta) \int_0^\theta e^*(\eta) d \eta+e \tilde{b}_1(\theta) \int_0^\theta e^*(\varepsilon \eta) d \eta   +\sum_{p=1}^7 E_p(\theta) \notag\\
		& +  \int_0^\theta(\theta-\eta)^{-\mu}  \bar{K}_1(\theta, \eta) e(\eta) d \eta+  \int_0^\theta(\theta-\eta)^{-\mu}  \bar{K}_2(\theta, \varepsilon \eta) e(\varepsilon \eta) d \eta\notag \\
		= & \tilde{a}_1(\theta) \int_0^\theta e^*(\eta) d \eta+\tilde{b}_1(\theta) \int_0^{\varepsilon \theta} e^*(\eta) d \eta+J(\theta)\notag \\
		& +  \int_0^\theta(\theta-\eta)^{-\mu}  \bar{K}_1(\theta, \eta)\left(\int_0^\eta e^*(\sigma)\right) d \eta\
		+\int_0^\theta(\theta-\eta)^{-\mu}   \bar{K}_2(\theta,\varepsilon\eta)\left(\int_0^{\varepsilon \sigma} e^*(\sigma) d \sigma\right)d\eta\notag \\
		= & \tilde{a}_1(\theta) \int_0^\theta e^*(\eta) d \eta+\tilde{b}_1(\theta) \int_0^{\varepsilon \theta} e^*(\eta) d \eta+J(\theta)\notag \\
		& +  \int_0^\theta\left(\int_\eta^\theta(\theta-\sigma)^{-\mu}  \bar{K}_1(\theta, \sigma) d \sigma\right) e^*(\eta) d \eta 
		+  \int_0^{\varepsilon \theta}\left(\int_{\frac{\eta}{\varepsilon}}^\theta(\theta-\sigma)^{-\mu}  \bar{K}_2(\theta, \varepsilon\sigma) d \sigma\right) e^*(\eta) d \eta\notag\\
		=& J(\theta)+\int_0^\theta\left(\int_\eta^\theta(\theta-\sigma)^{-\mu}  \bar{K}_1(\theta, \sigma) d \sigma+ \tilde{a}_1(\theta)\right) e^*(\eta) d \eta \notag\\
		& +  \int_0^{\varepsilon \theta}\left(\int_{\frac{\eta}{\varepsilon}}^\theta(\theta-\sigma)^{-\mu}  \bar{K}_2(\theta, \varepsilon\sigma) d \sigma+ \tilde{b}_1(\theta)\right) e^*(\eta) d \eta.\label{eq_Gronwall_origin}
	\end{align}
where
	\begin{align}
		J(\theta)=&  \int_0^\theta(\theta-\eta)^{-\mu}  \bar{K}_1(\theta, \eta)\left(E_8(\eta)+E_9(\eta)\right) d \eta\notag\\
		&+  \int_0^\theta(\theta-\eta)^{-\mu}  \bar{K}_2(\theta,\varepsilon\eta)\left(E_8(\varepsilon\eta)+E_9(\varepsilon\eta)\right) d\eta
		+\sum_{p=1}^7 E_p(\theta).\label{eq_Gronwall_J}
	\end{align}
Define the area $D:\lbrace(\theta,\sigma):0 \leq \sigma \leq \theta,\theta \in [0,1]\rbrace$,then denote $\underset{(\theta,\sigma) \in D}{\max}\left|\bar{K}_1(\theta,\sigma)\right|=\bar{K}_1^{max}$, 
$\underset{(\theta,\sigma) \in D}{\max}\left|\bar{K}_2(\theta,\varepsilon\sigma)\right|=\bar{K}_2^{max}$, 
$\underset{\theta\in[0,1]}{\max}\left|\tilde{a}_1(\theta)\right|=a^{max}$, $\underset{\theta\in[0,1]}{\max}\left|\tilde{b}_1(\theta)\right|=b^{max}$, we have:
	\begin{align}
		&\left| \left(\int_\eta^\theta(\theta-\sigma)^{-\mu}  \bar{K}_1(\theta, \sigma) d \sigma+ \tilde{a}_1(\theta)\right)\right| \notag\\
		\leq &\left| \left(\int_0^\theta(\theta-\sigma)^{-\mu}  \bar{K}_1(\theta, \sigma) d \sigma+ \tilde{a}_1(\theta)\right)\right| \notag\\
		\leq& \left|B(1-\mu,1)\bar{K}_1^{max}+\tilde{a}_1(\theta)\right|\notag\\
		\leq&\left|B(1-\mu,1)\bar{K}_1^{max}\right|+a^{max}\notag\\
		\equiv &C_1.\label{eq_Gronwall_C1}
	\end{align}
	\begin{align}
		&\left| \left(\int_{\frac{\eta}{\varepsilon}}^\theta(\theta-\sigma)^{-\mu}  \bar{K}_2(\theta, \varepsilon\sigma) d \sigma+ \tilde{b}_1(\theta)\right)\right|\notag\\
		\leq&\bar{K}_2^{max}\left| \left(\int_{\frac{\eta}{\varepsilon}}^\theta(\theta-\sigma)^{-\mu}d \sigma\right)\right|+b^{max}\notag\\
		\leq&\bar{K}_2^{max}B(1-\mu, 1)+b^{max} \notag\\
		\equiv&C_2.\label{eq_Gronwall_C2}
	\end{align}
where $0 \leq \frac{\eta}{\varepsilon} \leq \theta$.
Taking the absolute value of both sides of \eqref{eq_Gronwall_origin}. By the triangle inequality and \eqref{eq_Gronwall_C1} and \eqref{eq_Gronwall_C2}, we can obtain
	\begin{align}
		\left|e^*(\theta)\right| & \leq C_1 \int_0^\theta\left|e^*(\eta)\right| d \eta+C_2 \int_0^{\varepsilon \theta}\left|e^*(\eta)\right| d \eta+|J(\theta)| \notag\\ 
		& \leq\left(C_1+C_2\right) \int_0^\theta\left|e^*(\eta)\right| d \eta+|J(\theta)|.\label{eq_Gronwall_e*}
	\end{align}
Lemma \ref{lemma_Chen_Lemma3.5_norm} tells us
	\begin{align}
		\|{e}^{*}(\theta)\|_{\infty} \leq C\|J(\theta)\|_{\infty}.
	\end{align}
It follows from the relationship between $e(\theta)$ and $e^*(\theta)$ in \eqref{eq_sum*_origin}, we can infer that
	\begin{align}
		\|e(\theta)\|_{\infty} \leq \|{e}^{*}(\theta)\|_{\infty}+\left\|E_8(\theta)\right\|_{\infty}+\left\|E_9(\theta)\right\|_{\infty} .\label{e*_e}
	\end{align}
It can be determined from \eqref{eq_Gronwall_J} that
	\begin{align}
		\|J(\theta)\|_{\infty} \leq C \sum_{p=1}^9\left\|E_p(\theta)\right\|_{\infty}.
	\end{align}
In conclusion,
	\begin{align}
		&\|{e}^{*}(\theta)\|_{\infty} \leq C\sum_{p=1}^9\left\|E_p(\theta)\right\|_{\infty},\\
		&\|e(\theta)\|_{\infty} \leq C \sum_{p=1}^9\left(\left\|E_p(\theta)\right\|_{\infty}+\left\|E_8(\theta)\right\|_{\infty}+\left\|E_9(\theta)\right\|_{\infty}\right).
	\end{align}
By virtue of \eqref{eq_sum*_origin}, \eqref{eq_Gronwall_J}, \eqref{eq_Gronwall_e*}, Lemma \ref{lemma_Chen_Lemma3.5_norm}, and Lemma \ref{lemma_Hardy}, we can derive the following estimates
	\begin{align}
		\left\|e^*(\theta)\right\|_{0,\omega^{\alpha,\beta,1}}  &\leq C\left( \sum_{p=1}^{9}\left\|E_p(\theta)\right\|_{0,\omega^{\alpha,\beta,1}}
		+\left\|E_8(\varepsilon\eta)\right\|_{0,\omega^{\alpha,\beta,1}}+\left\|E_8(\varepsilon\eta)\right\|_{0,\omega^{\alpha,\beta,1}}\right)\notag\\
		&\leq C\left(\sum_{p=1}^{9}\left\|E_p(\theta)\right\|_{0,\omega^{\alpha,\beta,1}}+\left\|E_8(\theta)\right\|_{\infty}+\left\|E_9(\theta)\right\|_{\infty} \right)\\
		\left\|e(\theta)\right\|_{0,\omega^{\alpha,\beta,1}}  &\leq C
		\left(\left\|e^*(\theta)\right\|_{0,\omega^{\alpha,\beta,1}}+\left\|E_8(\theta)\right\|_{0,\omega^{\alpha,\beta,1}}+\left\|E_9(\theta)\right\|_{0,\omega^{\alpha,\beta,1}} \right).\label{eq_e_e*}
	\end{align}
\end{proof}
Next, we will demonstrate convergence analysis in $L^{\infty}$-norm.

\subsection{Error estimate in $L^{\infty}$-norm}
\begin{theorem}\label{theorem_infty_norm}
	$\varphi(\theta)$ is the exact solution of the VIDEs \eqref{eq_phi_prime} and \eqref{eq_phi_origin} with sufficiently smooth.	
	$$
	\begin{aligned}
		\varphi_{N}^{\ast}\left(\theta\right)=\sum_{j=0}^{N}\varphi_j^{\ast}F_{j,\lambda}(\theta),
		\varphi_{N}\left(\theta\right)=\sum_{j=0}^{N}\varphi_jF_{j,\lambda}(\theta).\notag
	\end{aligned}
	$$
	$\lbrace\varphi_j^*\rbrace_{j=0}^N,\lbrace\varphi_j\rbrace_{j=0}^N$ have been defined in \eqref{eq_collocation*_prime} and \eqref{eq_collocation*_origin}.
	If $ \varphi\left(\theta^{\frac{1}{\lambda}}\right) \in B_{\alpha, \beta}^{m, 1}(I), \partial_\theta \varphi\left(\theta^{\frac{1}{\lambda}}\right) \in B_{\alpha, \beta}^{m, 1}(I), \\
	\tilde{a}_1(\theta), \tilde{b}_1(\theta), \tilde{f}_1(\theta) \in C^m(I), \bar{K}_1(\theta, \eta), \bar{K}_2(\theta, \varepsilon \eta) \in C^m(I \times I),\text{ where }m \geq 1$. When $-1<\alpha, \beta \leq-\frac{1}{2},$$0<\mu<1,$ where $\mu$ is a real number related with the weakly singular kernel and $0 \leq \kappa \leq 1-\mu,$
	then we can obtain:
	\begin{align}
	\left\|e^*(\theta)\right\|_{\infty}\leq &CN^{-m}\Bigl\{\log N\bigl(\mathcal{K}^*\left\|\varphi(\theta)\right\|_{\infty}
	+N^{\frac{1}{2}-\kappa}\left\|\partial_\theta^m \varphi\left(\theta^{\frac{1}{\lambda}}\right)\right\|_{0, \omega^{\alpha+m,\beta+m,1}}\bigr)+N^{\frac{1}{2}}\Phi\Bigr\},\label{theorem_e*_infty}\\
	\left\|e(\theta)\right\|_{\infty}\leq&CN^{-m}\Bigl\{\log 	N\bigl(\mathcal{K}^*\left\|\varphi(\theta)\right\|_{\infty}
	+N^{\frac{1}{2}-\kappa}\left\|\partial_\theta^m 	\varphi\left(\theta^{\frac{1}{\lambda}}\right)\right\|_{0, \omega^{\alpha+m,\beta+m,1}}\bigr)
	+N^{\frac{1}{2}}\Phi\Bigr\}.\label{theorem_e_infty}
	\end{align}
	where $N$ is a sufficiently large positive integer and
	\begin{align}
		\mathcal{K}^*=&\max\limits_{0\leq i\leq N}\left\|\partial_\theta^m \widetilde{K}_1\left(\theta_i, \eta_i(\cdot)\right)\right\|_ {0,\omega^{m-\mu,{m+\frac{1}{\lambda}}-1,1}}+\max\limits_{0\leq i\leq N}\left\|\partial_\theta^m \widetilde{K}_2\left(\theta_i, \varepsilon \eta_i(\cdot)\right)\right\|_ {0,\omega^{m-\mu,{m+\frac{1}{\lambda}}-1,1}},\notag\\
		\Phi=&\left\|\partial_\theta^{m+1} \varphi\left(\theta^{\frac{1}{\lambda}}\right)\right\|_{0, \omega^{\alpha+m,\beta+m,1}}+\left\|\partial_\theta^m \varphi\left(\theta^{\frac{1}{\lambda}}\right)\right\|_{0, \omega^{\alpha+m,\beta+m,1}}.\notag
	\end{align}
\end{theorem}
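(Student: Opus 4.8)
The plan is to start from Theorem \ref{theorem_Gronwall}, which already reduces both $\|e^*(\theta)\|_\infty$ and $\|e(\theta)\|_\infty$ to $C\sum_{p=1}^9\|E_p(\theta)\|_\infty$; the entire task is therefore to estimate each $\|E_p\|_\infty$ and then resolve the terms that reproduce $e,e^*$ on the right. I would sort the nine quantities into three families. The first is $E_1,E_2$, built from the quadrature remainders $I_{i,1},I_{i,2}$. Writing $E_1=\sum_i I_{i,1}F_{i,\lambda}$ and combining the Lebesgue-constant bound of Lemma \ref{lemma_Lesbegue_constant} (which is $O(\log N)$ precisely because $-1<\alpha,\beta\le-\tfrac12$) with the pointwise estimates \eqref{eq_E1_Ii1_absolute}--\eqref{eq_E2_Ii2_absolute} gives $\|E_1\|_\infty+\|E_2\|_\infty\le C\log N\,N^{-m}\mathcal{K}^*\max_i\|\varphi_N(\eta_i(\cdot))\|_{0,\omega^{-\mu,1/\lambda-1,1}}$. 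Since that weight is integrable, the factor is controlled by $\|\varphi_N\|_\infty\le\|\varphi\|_\infty+\|e\|_\infty$ (using $\varphi_N=\varphi-e$), so this family yields the $\log N\,N^{-m}\mathcal{K}^*\|\varphi\|_\infty$ contribution plus an $\|e\|_\infty$ piece carrying the small factor $\log N\,N^{-m}$ that will be absorbed later.

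The second family is $E_3=\varphi'-I_{N,\lambda}^{\alpha,\beta}\varphi'$ and $E_8=\varphi-I_{N,\lambda}^{\alpha,\beta}\varphi$, genuine interpolation errors of the smooth data. Lemma \ref{lemma_Interpolation_infinity} gives directly $\|E_8\|_\infty\le CN^{1/2-m}\|\partial_\theta^m\varphi(\theta^{1/\lambda})\|_{0,\omega^{\alpha+m,\beta+m,1}}\le CN^{1/2-m}\Phi$, and, since $E_3$ interpolates one extra derivative, $\|E_3\|_\infty\le CN^{1/2-m}\Phi$ once the chain-rule expansion of $\partial_\theta^m[\varphi'(\theta^{1/\lambda})]$ is bounded by the $\partial_\theta^{m+1}$ and $\partial_\theta^{m}$ norms comprising $\Phi$. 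Both thus fold into the $N^{1/2}\Phi$ term of the statement.

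The third family, $E_4,E_5,E_6,E_7,E_9$, consists of interpolation errors of functions built from the unknown errors $e,e^*$. For each I would insert the best-approximation operator $\mathcal{K}_N$ of \eqref{eq_Crk}: as $\mathcal{K}_N g\in P_N^1$ is fixed by $I_{N,\lambda}^{\alpha,\beta}$, one obtains $\|I_{N,\lambda}^{\alpha,\beta}g-g\|_\infty\le(1+\|I_{N,\lambda}^{\alpha,\beta}\|_\infty)\|g-\mathcal{K}_N g\|_\infty\le C\log N\,N^{-\kappa}\|g(\theta^{1/\lambda})\|_{0,\kappa}$. For $E_6,E_7$ the relevant $g$ is the weakly singular integral $\mathcal{K}_1 e$, resp. $\mathcal{K}_2 e(\varepsilon\cdot)$, whose composite $g(\theta^{1/\lambda})$ lies in $C^{0,\kappa}$ with $\|g(\theta^{1/\lambda})\|_{0,\kappa}\le C\|e\|_\infty$ exactly by Lemma \ref{lemma_J6}; for $E_4,E_5,E_9$ the primitive $\int_0^\theta e^*\,d\eta$ is Lipschitz with constant $\|e^*\|_\infty$ and, composed with the Lipschitz map $\theta\mapsto\theta^{1/\lambda}$ (here $1/\lambda\ge1$) and the smooth coefficients $\tilde a_1,\tilde b_1$, stays in $C^{0,\kappa}$ with norm $\le C\|e^*\|_\infty$. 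Hence this family is bounded by $C\log N\,N^{-\kappa}(\|e^*\|_\infty+\|e\|_\infty)$.

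Finally I would close the loop. Substituting \eqref{eq_sum*_origin} in the form $\|e\|_\infty\le\|e^*\|_\infty+\|E_8\|_\infty+\|E_9\|_\infty$ into the third-family bound turns its $\|e\|_\infty$ part into $\|e^*\|_\infty$ plus the already-estimated $\|E_8\|_\infty\le CN^{1/2-m}\|\partial_\theta^m\varphi(\theta^{1/\lambda})\|$; the resulting product $\log N\,N^{-\kappa}\cdot N^{1/2-m}\|\partial_\theta^m\varphi\|$ is precisely the $\log N\,N^{1/2-m-\kappa}\|\partial_\theta^m\varphi\|$ term of \eqref{theorem_e*_infty}. Collecting everything, $\|e^*\|_\infty$ reappears on the right with coefficient $C\log N\,N^{-\kappa}$, which for $N$ large enough is $<\tfrac12$ and can be absorbed into the left-hand side. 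What remains are exactly the three advertised groups, giving \eqref{theorem_e*_infty}; feeding this back into $\|e\|_\infty\le\|e^*\|_\infty+\|E_8\|_\infty+\|E_9\|_\infty$ yields \eqref{theorem_e_infty}. I expect the absorption step to be the main obstacle: it is legitimate only because every coefficient multiplying $\|e^*\|_\infty$ and $\|e\|_\infty$ carries a genuinely decaying factor ($\log N\,N^{-\kappa}$ or $\log N\,N^{-m}$), and this in turn hinges on verifying that the weakly singular terms $E_6,E_7$ truly land in $C^{0,\kappa}$ via Lemma \ref{lemma_J6}, rather than merely being bounded.
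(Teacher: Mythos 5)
Your proposal is correct and takes essentially the same route as the paper: the reduction via Theorem \ref{theorem_Gronwall}, the Lebesgue-constant bound (Lemma \ref{lemma_Lesbegue_constant}) combined with Lemma \ref{lemma_Ii1Ii2} for $E_1,E_2$, Lemma \ref{lemma_Interpolation_infinity} for $E_3,E_8$, the best-approximation insertion together with Lemma \ref{lemma_J6} and the Lebesgue constant for the weakly singular terms $E_6,E_7$, and the final absorption of the $\left\|e^*(\theta)\right\|_{\infty}$ terms for $N$ large. The only deviation is your handling of $E_4,E_5,E_9$ by the same Hölder/insertion trick (yielding $C\log N\,N^{-\kappa}\left\|e^*(\theta)\right\|_{\infty}$), whereas the paper applies Lemma \ref{lemma_Interpolation_infinity} with $m=1$ (yielding $CN^{-1/2}\left\|e^*(\theta)\right\|_{\infty}$); both coefficients decay, so the absorption goes through identically and the stated bounds follow either way.
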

\begin{proof}
As discussed in \eqref{eq_Ii1},\eqref{eq_Ii2},\eqref{e*_e}, Lemma \ref{lemma_Lesbegue_constant} and Lemma \ref{lemma_Ii1Ii2}, we can deduce:
	\begin{align}
		\left\|E_1(\theta)\right\|_{\infty}
		&\leq C\max\limits_{0\leq i\leq N}\left|I_{i, 1}\right| \max\limits_{\theta \in I}\sum_{j=0}^{N}\left|F_{j,\lambda}(\theta)\right|\notag\\
		&\leq C N^{-m}\log N\max\limits_{0\leq i\leq N}\left\|\partial_\theta^m \widetilde{K}_1\left(\theta_i,  \eta_i(\cdot)\right)\right\|_ {0,\omega^{m-\mu,{m+\frac{1}{\lambda}}-1,1}}
		\left\|\varphi_N\left( \eta_i\left(\cdot\right)\right)\right\|_{0,\omega^{-\mu,\frac{1}{\lambda}-1,1}},\notag\\
		&\leq C N^{-m}\log N\max\limits_{0\leq i\leq N}\left\|\partial_\theta^m \widetilde{K}_1\left(\theta_i,  \eta_i(\cdot)\right)\right\|_ {0,\omega^{m-\mu,{m+\frac{1}{\lambda}}-1,1}}
		\left(\left\|\varphi(\theta)\right\|_{\infty}+\left\|e(\theta)\right\|_\infty\right)\notag\\
		&\leq C N^{-m}\log N\max\limits_{0\leq i\leq N}\left\|\partial_\theta^m \widetilde{K}_1\left(\theta_i,  \eta_i(\cdot)\right)\right\|_ {0,\omega^{m-\mu,{m+\frac{1}{\lambda}}-1,1}}\notag\\
		&\quad\left(\left\|\varphi(\theta)\right\|_{\infty}+\|{e}^{*}(\theta)\|_{\infty}+\left\|E_8(\theta)\right\|_{\infty}+\left\|E_9(\theta)\right\|_{\infty}\right),\label{eq_E1_infty}
	\end{align}
	\begin{align}
		\left\|E_2(\theta)\right\|_{\infty}
		&\leq C\max\limits_{0\leq i\leq N}\left|I_{i, 2}\right| \max\limits_{(\theta \in I}\sum_{j=0}^{N}\left|F_{j,\lambda}(\theta)\right|\notag\\
		&\leq C N^{-m}\log N\max\limits_{0\leq i\leq N}\left\|\partial_\theta^m \widetilde{K}_2\left(\theta_i, \varepsilon \eta_i(\cdot)\right)\right\|_ {0,\omega^{m-\mu,{m+\frac{1}{\lambda}}-1,1}}
		\left\|\varphi_N\left(\varepsilon \eta_i\left(\cdot\right)\right)\right\|_ {0,\omega^{-\mu,\frac{1}{\lambda}-1,1}}\notag\\
		&\leq C N^{-m}\log N\max\limits_{0\leq i\leq N}\left\|\partial_\theta^m \widetilde{K}_2\left(\theta_i, \varepsilon \eta_i(\cdot)\right)\right\|_ {0,\omega^{m-\mu,{m+\frac{1}{\lambda}}-1,1}}
		\left(\left\|\varphi(\theta)\right\|_{\infty}+\left\|e(\theta)\right\|_{\infty}\right)\notag\\
		&\leq C N^{-m}\log N\max\limits_{0\leq i\leq N}\left\|\partial_\theta^m \widetilde{K}_2\left(\theta_i, \varepsilon \eta_i(\cdot)\right)\right\|_ {0,\omega^{m-\mu,{m+\frac{1}{\lambda}}-1,1}}\notag\\
		&\quad\left(\left\|\varphi(\theta)\right\|_{\infty}+\|{e}^{*}(\theta)\|_{\infty}+\left\|E_8(\theta)\right\|_{\infty}+\left\|E_9(\theta)\right\|_{\infty}\right).\label{eq_E2_infty}
	\end{align}
Applying Lemma \ref{lemma_Interpolation_infinity}, the error estimate will be derived:
	\begin{align}
		&\left\|E_3(\theta)\right\|_{\infty} \leq C N^{\frac{1}{2}-m}\left\|\partial_\theta^{m+1} \varphi\left(\theta^{\frac{1}{\lambda}}\right)\right\|_{0, \omega^{\alpha+m,\beta+m,1}},\label{eq_E3_infity}\\
		&\left\|E_8(\theta)\right\|_{\infty}\leq C N^{\frac{1}{2}-m}\left\|\partial_\theta^m \varphi\left(\theta^{\frac{1}{\lambda}}\right)\right\|_{0, \omega^{\alpha+m,\beta+m,1}}.\label{eq_E8_infty}
	\end{align}

$\int_0^{\theta^{\frac{1}{\lambda}}} e^*(\eta) d \eta=\int_0^{\theta^{\frac{1}{\lambda}}}\left(\varphi^{\prime}(\eta)-\varphi_N^*(\eta)\right) d \eta,$
we set $ \varphi\left(\theta^{\frac{1}{\lambda}}\right) \in B_{\alpha, \beta}^{1,1}(I) \text { , then } \int_0^{\theta^{\frac{1}{\lambda}}} \varphi^{\prime}(\eta) d \eta \in B_{\alpha, \beta}^{1,1}(I)$. 
We have $\int_0^{\theta^{\frac{1}{\lambda}}} \varphi_N^*(\eta) d \eta \in B_{\alpha, \beta}^{1,1}(I)$ from  $\varphi_N^*\left(\theta^{\frac{1}{\lambda}}\right) \in B_{\alpha, \beta}^{1,1}(I)$, then $\left\|E_4(\theta)\right\|_{\infty}$ can be estimated by Lemma \ref{lemma_Interpolation_infinity} with the case $m=1$,
		\begin{align}
		\left\|E_4(\theta)\right\|_{\infty}
		& \leq\left\|\left(I_{N, \lambda}^{a, \beta}-I\right) \tilde{a}_1(\theta) \int_0^\theta e^*(\eta) d \eta\right\|_{\infty}\notag \\
		& \leq C N^{-\frac{1}{2}}\left\|\partial_\theta\left( \tilde{a}_1(\theta^{\frac{1}{\lambda}}) \int_0^{\theta^{\frac{1}{\lambda}}} e^*(\eta) d \eta\right)\right\|_{\infty} \notag\\
		&\leq C N^{-\frac{1}{2}}\left\|\frac{1}{\lambda} \theta^{\frac{1}{\lambda}-1}\partial_\theta\tilde{a}_1(\theta^{\frac{1}{\lambda}})\int_0^{\theta^{\frac{1}{\lambda}}} e^*(\eta) d \eta    \right\|_{\infty} 
		+ C N^{-\frac{1}{2}}\left\|\frac{1}{\lambda} \theta^{\frac{1}{\lambda}-1} e^*\left(\theta^{\frac{1}{\lambda}}\right)+\int_0^{\theta^{\frac{1}{\lambda}}} e^*(\eta) d \eta\right\|_{\infty}\notag\\
		& \leq C N^{-\frac{1}{2}}\left\|\theta^{\frac{1}{\lambda}}e^*(\theta)\right\|_{\infty}+C N^{-\frac{1}{2}}\left\|\frac{1}{\lambda} \theta^{\frac{1}{\lambda}-1} e^*\left(\theta^{\frac{1}{\lambda}}\right)+\int_0^{\theta^{\frac{1}{\lambda}}} e^*(\eta) d \eta\right\|_{\infty} \notag\\
		&\leq C N^{-\frac{1}{2}}\left(\left\|e^*\left(\theta^{\frac{1}{\lambda}}\right)\right\|_{\infty}+
		\left\|e^*(\theta)\right\|_{\infty}\right)
		\leq C N^{-\frac{1}{2}}\left\|e^*(\theta)\right\|_{\infty}.\label{eq_E4_infty}
	\end{align}
Similarly, estimate $\left\|E_5(\theta)\right\|_{\infty},\left\|E_9(\theta)\right\|_{\infty}$ respectively 
	\begin{align}
		&\left\|E_5(\theta)\right\|_{\infty}\leq C N^{-\frac{1}{2}}\left\|e^*(\theta)\right\|_{\infty},\label{eq_E5_infty}\\
		&\left\|E_9(\theta)\right\|_{\infty}\leq C N^{-\frac{1}{2}}\left\|e^*(\theta)\right\|_{\infty}.\label{eq_E9_infty}
	\end{align}
When 
$\bar{K}_2(\theta, \varepsilon\eta) \in C^m(I \times I)$, we can deduce from  \eqref{eq_Interpolation_operator_sum}, \eqref{eq_Crk}, and Lemmas \ref{lemma_J6} and \ref{lemma_Lesbegue_constant} that
		\begin{align}
		\left\|E_7(\theta)\right\|_{\infty} 
		& =\max _{\theta \in I}\left|I_{N, \lambda}^{\alpha, \beta}(\mathcal{K}_2 e)(\theta)-(\mathcal{K}_2 e)(\theta)\right|
		=\max _{z^{1 / \lambda}=\theta \in I}\left|I_{N, 1}^{\alpha, \beta}(\mathcal{K}_2 e)\left(z^{1 / \lambda}\right)-(\mathcal{K}_2 e)\left(z^{1 / \lambda}\right)\right| \notag\\ 
		& =\left\|\left(I_{N, 1}^{\alpha, \beta}-I\right)(\mathcal{K}_2 e)\left(z^{1 / \lambda}\right)\right\|_{\infty}\notag\\
		& =\left\|\left(I_{N, 1}^{\alpha, \beta}-I\right)\left[(\mathcal{K}_2 e)\left(z^{1 / \lambda}\right)-\mathcal{T}_N(\mathcal{K}_2 e)\left(z^{1 / \lambda}\right)\right]\right\|_{\infty}\notag \\ 
		& \leq\left(\left\|I_{N, 1}^{\alpha, \beta}\right\|_{\infty}+1\right)\left\|(\mathcal{K}_2 e)\left(z^{1 / \lambda}\right)-\mathcal{T}_N(\mathcal{K}_2 e)\left(z^{1 / \lambda}\right)\right\|_{\infty} \notag\\ 
		& \leq c N^{-\kappa} \log N\|e(\theta)\|_{\infty}, \notag\\
		&\leq C N^{-\kappa} \log N\left(\|{e}^{*}(\theta)\|_{\infty}+\left\|E_8(\theta)\right\|_{\infty}+\left\|E_9(\theta)\right\|_{\infty}\right),\quad 0<\kappa<1-\mu.
		\end{align}
It can be obtained from the similar way that
		\begin{align}
		\left\|E_{6}(\theta)\right\|_{\infty}=
		& \leq c N^{-\kappa} \log N\|e(\theta)\|_{\infty}\notag\\
		&\leq C N^{-\kappa} \log N\left(\|{e}^{*}(\theta)\|_{\infty}+\left\|E_8(\theta)\right\|_{\infty}+\left\|E_9(\theta)\right\|_{\infty}\right).\label{eq_E7_infty}
	\end{align}
In conclusion, from \eqref{eq_E1_infty} to \eqref{eq_E7_infty} together with \eqref{theorem_normcontrol_e*_infty} and \eqref{theorem_normcontrol_e_infty}, we can obtain the estimates for $\left\|e^*(\theta)\right\|_{\infty}$ and $\left\|e(\theta)\right\|_{\infty}$ as follows
	\begin{align}
		\left\|e^*(\theta)\right\|_{\infty}\leq &CN^{-m}\Bigl\{\log N\bigl(\mathcal{K}^*\left\|\varphi(\theta)\right\|_{\infty}
		+N^{\frac{1}{2}-\kappa}\left\|\partial_\theta^m \varphi\left(\theta^{\frac{1}{\lambda}}\right)\right\|_{0, \omega^{\alpha+m,\beta+m,1}}\bigr)+N^{\frac{1}{2}}\Phi\Bigr\},\\
		\left\|e(\theta)\right\|_{\infty}\leq&CN^{-m}\Bigl\{\log 	N\bigl(\mathcal{K}^*\left\|\varphi(\theta)\right\|_{\infty}
		+N^{\frac{1}{2}-\kappa}\left\|\partial_\theta^m 	\varphi\left(\theta^{\frac{1}{\lambda}}\right)\right\|_{0, \omega^{\alpha+m,\beta+m,1}}\bigr)
		+N^{\frac{1}{2}}\Phi\Bigr\}.
	\end{align}
\end{proof}
Thereupon, the error estimate under the $L_{\omega^{\alpha,\beta,\lambda}}^{2}$-norm will be derived.

\subsection{Error estimate in $L_{\omega^{\alpha,\beta,\lambda}}^{2}$-norm}
\begin{theorem}\label{theorem_L2_norm}
If the hypotheses keep the same as Theorem \ref{theorem_infty_norm}, then the estimates of  $L_{\omega^{\alpha,\beta,\lambda}}^{2}$-norm will become:
	\begin{align}
	\left\|e^*(\theta)\right\|_{0,\omega^{\alpha,\beta,1}} \leq&
	CN^{-m}\Bigl\{\left(N^{-\kappa}\log N+1\right)\mathcal{K}^*\left\|\varphi(\theta)\right\|_{\infty} \notag\\
	&+N^{\frac{1}{2}-\kappa}\left\|\partial_\theta^m \varphi\left(\theta^{\frac{1}{\lambda}}\right)\right\|_{0, \omega^{\alpha+m,\beta+m,1}}
	+\left(N^{\frac{1}{2}-\kappa}+1\right)\Phi\Bigr\},\label{theorem_e*_L2}\\
	\left\|e(\theta)\right\|_{0,\omega^{\alpha,\beta,1}} \leq&
	CN^{-m}\Bigl\{\left(N^{-\kappa}\log N+1\right)\mathcal{K}^*\left\|\varphi(\theta)\right\|_{\infty} \notag\\
	&+\left(N^{\frac{1}{2}-\kappa}+1\right)\left\|\partial_\theta^m \varphi\left(\theta^{\frac{1}{\lambda}}\right)\right\|_{0, \omega^{\alpha+m,\beta+m,1}}
	+\left(N^{\frac{1}{2}-\kappa}+1\right)\Phi\Bigr\}. \label{theorem_e_L2}
	\end{align}
\end{theorem}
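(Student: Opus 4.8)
The plan is to follow the skeleton of the proof of Theorem \ref{theorem_infty_norm}, but to measure every quantity in the weighted $L^2_{\omega^{\alpha,\beta,1}}$-norm instead of the sup-norm, taking the control inequalities \eqref{theorem_normcontrol_e*_w} and \eqref{theorem_normcontrol_e_w} of Theorem \ref{theorem_Gronwall} as the starting point. Thus I would bound $\|e^*\|_{0,\omega^{\alpha,\beta,1}}$ by $C\big(\sum_{p=1}^9\|E_p\|_{0,\omega^{\alpha,\beta,1}}+\|E_8\|_\infty+\|E_9\|_\infty\big)$ and estimate each $E_p$ separately, replacing the $L^\infty$ interpolation bound (Lemma \ref{lemma_Interpolation_infinity}) by the sharper $L^2$ version (Lemma \ref{lemma_Interpolation_L^2}), which saves the factor $N^{1/2}$ and accounts for the improved exponents in \eqref{theorem_e*_L2} compared with \eqref{theorem_e*_infty}.

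For the pure interpolation residuals $E_3=\varphi'-I_{N,\lambda}^{\alpha,\beta}\varphi'$ and $E_8=\varphi-I_{N,\lambda}^{\alpha,\beta}\varphi$, Lemma \ref{lemma_Interpolation_L^2} yields $\|E_3\|_{0,\omega^{\alpha,\beta,1}}\leq CN^{-m}\|\partial_\theta^{m+1}\varphi(\theta^{1/\lambda})\|_{0,\omega^{\alpha+m,\beta+m,1}}$ and the analogous bound for $E_8$, supplying the $\Phi$-type contributions, while the term $\|E_8\|_\infty$ demanded by \eqref{theorem_normcontrol_e*_w} is still controlled through \eqref{eq_E8_infty}. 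For the quadrature errors $E_1,E_2$ I note that each is itself the interpolant $\sum_i I_{i,\cdot}F_{i,\lambda}$ of the grid function with nodal values $I_{i,1}$ (resp.\ $I_{i,2}$), so Lemma \ref{lemma_E6E7_L^2} gives $\|E_1\|_{0,\omega^{\alpha,\beta,1}}\leq C\max_i|I_{i,1}|$; combining \eqref{eq_E1_Ii1_absolute} (Lemma \ref{lemma_Ii1Ii2}) with $\|\varphi_N(\eta_i(\cdot))\|\leq\|\varphi\|_\infty+\|e\|_\infty$ then produces the $N^{-m}\mathcal{K}^*\|\varphi\|_\infty$ contribution (the ``$+1$'' in the factor $N^{-\kappa}\log N+1$), whereas feeding back the sup-norm bound for $\|e\|_\infty$ from Theorem \ref{theorem_infty_norm} produces the accompanying $N^{-\kappa}\log N$ factor.

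The terms $E_4,E_5,E_9$ are interpolation residuals applied to antiderivatives of $e^*$; since $\int_0^\theta e^*$ belongs to $B_{\alpha,\beta}^{1,1}(I)$ under the stated hypotheses, I would apply Lemma \ref{lemma_Interpolation_L^2} with $m=1$ and differentiate the product exactly as in \eqref{eq_E4_infty}, obtaining bounds of the form $CN^{-1}\|e^*\|_{0,\omega^{\alpha,\beta,1}}$ (with the $\|E_9\|_\infty$ slot handled in $L^\infty$ by \eqref{eq_E9_infty}). Because these carry a strictly negative power of $N$ in front of $\|e^*\|_{0,\omega^{\alpha,\beta,1}}$, for $N$ sufficiently large they may be absorbed into the left-hand side, and this absorption is what finally closes the estimate for $\|e^*\|_{0,\omega^{\alpha,\beta,1}}$.

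I expect the main obstacle to be the weakly singular terms $E_6=(I_{N,\lambda}^{\alpha,\beta}-I)(\mathcal{K}_1 e)$ and $E_7=(I_{N,\lambda}^{\alpha,\beta}-I)(\mathcal{K}_2 e)$, where $\mathcal{K}_i e$ is merely Hölder continuous so that the high-order interpolation estimate is unavailable. The device is to insert the best $\mathcal{P}_N^1(I)$-approximant $\mathcal{T}_N(\mathcal{K}_i e)$, which is fixed by both $I_{N,\lambda}^{\alpha,\beta}$ and the identity, writing $E_6=(I_{N,\lambda}^{\alpha,\beta}-I)\big[\mathcal{K}_1 e-\mathcal{T}_N(\mathcal{K}_1 e)\big]$; Lemma \ref{lemma_E6E7_L^2} then bounds the interpolated part in $L^2$ by the sup-norm of the approximation residual, and \eqref{eq_Crk} together with Lemma \ref{lemma_J6} gives $\|\mathcal{K}_i e-\mathcal{T}_N(\mathcal{K}_i e)\|_\infty\leq CN^{-\kappa}\|\mathcal{K}_i e\|_{0,\kappa}\leq CN^{-\kappa}\|e\|_\infty$. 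Crucially, working now in the weighted $L^2$-norm removes the Lebesgue-constant factor $\log N$ that was present in the sup-norm proof, so $\|E_6\|_{0,\omega^{\alpha,\beta,1}}\leq CN^{-\kappa}\|e\|_\infty$, and substituting the Theorem \ref{theorem_infty_norm} bound for $\|e\|_\infty$ furnishes the remaining $N^{-\kappa}\log N$ and $N^{1/2-\kappa}$ contributions. Once $\|e^*\|_{0,\omega^{\alpha,\beta,1}}$ is in hand, inequality \eqref{theorem_normcontrol_e_w} together with the $L^2$ and $L^\infty$ bounds already obtained for $E_8$ and $E_9$ yields the estimate \eqref{theorem_e_L2} for $\|e\|_{0,\omega^{\alpha,\beta,1}}$, completing the proof.
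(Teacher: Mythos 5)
Your proposal follows essentially the same route as the paper's proof: starting from the weighted control inequalities \eqref{theorem_normcontrol_e*_w}--\eqref{theorem_normcontrol_e_w}, you bound $E_1,E_2$ through $\max_{0\leq i\leq N}|I_{i,p}|$ via Lemmas \ref{lemma_Ii1Ii2} and \ref{lemma_E6E7_L^2}, bound $E_3,E_8$ via Lemma \ref{lemma_Interpolation_L^2}, treat $E_6,E_7$ by inserting the best approximant $\mathcal{T}_N$ and combining Lemma \ref{lemma_E6E7_L^2}, \eqref{eq_Crk} and Lemma \ref{lemma_J6} (so no Lebesgue constant appears), and close by substituting the $L^{\infty}$ estimates of Theorem \ref{theorem_infty_norm}, exactly as the paper does. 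The only (harmless) deviation is that for $E_4,E_5,E_9$ you keep $CN^{-1}\left\|e^*(\theta)\right\|_{0,\omega^{\alpha,\beta,1}}$ and absorb it into the left-hand side for large $N$, whereas the paper bounds these terms by $CN^{-1}\left\|e^*(\theta)\right\|_{\infty}$ and then invokes \eqref{theorem_e*_infty}; both yield the stated estimates.
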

\begin{proof}
From Lemmas \ref{lemma_Interpolation_L^2} and \ref{lemma_E6E7_L^2}, we can demonstrate the following error analysis for the weighted $L^2$-norm, similar to Theorem \ref{theorem_infty_norm}:
	\begin{align}
		\left\|E_1(\theta)\right\|_{0, \omega^{\alpha, \beta, \lambda}}&=\left\|\sum_{i=0}^N I_{i, 1} F_{i,\lambda}(\theta)\right\|_{0, \omega^{\alpha, \beta,\lambda}} \leq C\max _{0 \leq i \leq N}\left|I_{i, 1}\right|\notag \\
		& \leq C N^{-m} \max\limits_{0\leq i\leq N}\left\|\partial_\theta^m\widetilde{K}_1\left(\theta_i,  \eta_i(\cdot)\right)\right\|_ {0,\omega^{m-\mu,{m+\frac{1}{\lambda}}-1,1}}\left(\|e(\theta)\|_{\infty}+\|\varphi(\theta)\|_{\infty}\right) \notag\\
		&\leq C N^{-m}\max\limits_{0\leq i\leq N}\left\|\partial_\theta^m \widetilde{K}_1\left(\theta_i,  \eta_i(\cdot)\right)\right\|_ {0,\omega^{m-\mu,{m+\frac{1}{\lambda}}-1,1}}\notag\\
		&\quad\left(\|{e}^{*}(\theta)\|_{\infty}+\left\|E_8(\theta)\right\|_{\infty}+\left\|E_9(\theta)\right\|_{\infty}+\|\varphi(\theta)\|_{\infty}\right),\label{eq_E1_L2}\\
		\left\|E_2(\theta)\right\|_{0, \omega^{\alpha, \beta,\lambda}} &\leq C N^{-m}\max\limits_{0\leq i\leq N}\left\|\partial_\theta^m \widetilde{K}_2\left(\theta_i, \varepsilon \eta_i(\cdot)\right)\right\|_ {0,\omega^{m-\mu,{m+\frac{1}{\lambda}}-1,1}}\left(\|e(\theta)\|_{\infty}+\|\varphi(\theta)\|_{\infty}\right)\notag\\
		&\leq C N^{-m}\max\limits_{0\leq i\leq N}\left\|\partial_\theta^m \widetilde{K}_2\left(\theta_i, \varepsilon \eta_i(\cdot)\right)\right\|_ {0,\omega^{m-\mu,{m+\frac{1}{\lambda}}-1,1}}\notag\\
		&\quad\left(\|{e}^{*}(\theta)\|_{\infty}+\left\|E_8(\theta)\right\|_{\infty}+\left\|E_9(\theta)\right\|_{\infty}+\|\varphi(\theta)\|_{\infty}\right),\label{eq_E2_L2}
	\end{align}
	\begin{align}
		\left\|E_3(\theta)\right\|_{0, \omega^{\alpha, \beta, \lambda}} &\leq C N^{-m}\left\|\partial_\theta^{m+1} \varphi\left(\theta^{\frac{1}{\lambda}}\right)\right\|_{0, \omega^{\alpha+m,\beta+m,1}},\label{eq_E3_L2}\\
		\left\|E_8(\theta)\right\|_{0, \omega^{\alpha, \beta,\lambda}} &\leq C N^{-m} \left\|\partial_\theta^{m} \varphi\left(\theta^{\frac{1}{\lambda}}\right)\right\|_{0, \omega^{\alpha+m,\beta+m,1}},\label{eq_E8_L2}\\
		\left\|E_4(\theta)\right\|_{0, \omega^{\alpha, \beta,\lambda}} &\leq C N^{-1} \left\| e^*\left(\theta^{\frac{1}{\lambda}}\right)\right\|_{0, \omega^{\alpha, \beta,\lambda}} \leq C N^{-1}\left\| e^*(\theta)\right\|_{\infty},\label{eq_E4_L2}
	\end{align}
	\begin{align}
		\left\|E_5(\theta)\right\|_{0, w^{\alpha, \beta, \lambda}} &\leq C N^{-1}\left\|e^*(\theta)\right\|_{\infty},\label{eq_E5_L2}\\
		\left\|E_9(\theta)\right\|_{0, \omega^{\alpha, \beta,\lambda}}&\leq C N^{-1}\left\|e^*(\theta)\right\|_{\infty}\label{eq_E9_L2},\\
	\end{align}
		\begin{align}
			&\left\|\left(I_{N, \lambda}^{\alpha, \beta}-I\right)(\mathcal{K}_ie)(\theta)\right\|_{0,\omega^{\alpha, \beta, \lambda}} \notag\\
			& =\left\{\int_0^1\left[\left(I_{N, \lambda}^{\alpha, \beta}-I\right)(\mathcal{K}_ie)(\theta)\right]^2 \lambda\left(1-\theta^\lambda\right)^\alpha \theta^{(\beta+1) \lambda-1} d \theta\right\}^{1 / 2} \notag\\
			& =\left\{\int_0^1\left[\sum_{i=0}^N(\mathcal{K}_ie)\left(\theta_i\right)F_{j,\lambda}(\theta)-(\mathcal{K}_ie)(\theta)\right]^2\left(1-\theta^\lambda\right)^\alpha \theta^{\beta \lambda} d\theta^\lambda\right\}^{1 / 2}\notag \\
			& =\left\{\int_0^1\left[\sum_{i=0}^N(\mathcal{K}_ie)\left(z_i^{\frac{1}{\lambda}}\right)F_{j,1}(z)-(\mathcal{K}_ie)\left(z^{\frac{1}{\lambda}}\right)\right]^2(1-z)^\alpha z^\beta d z\right\}^{1 / 2}\notag \\
			& =\left\|\left(I_{N, 1}^{\alpha, \beta}-I\right)(\mathcal{K}_ie)\left(z^{\frac{1}{\lambda}}\right)\right\|_{0,\omega^{\alpha, \beta, 1}}
		\end{align}
	\begin{align}
		\left\|E_{6}(\theta)\right\|_{0, \omega^{\alpha, \beta,\lambda}}&=\left\|\left(I_{\beta, \lambda}^{\alpha, \beta}-I\right)\left(\mathcal{K}_1 e\right)(\theta)\right\|_{0, \omega^{\alpha, \beta, \lambda}} \notag\\
		& =\left\|\left(I_{N, 1}^{\alpha, \beta}-I\right)\left(\mathcal{K}_1 e\right)\left(z^{\frac{1}{\lambda}}\right) \right\|_{0, \omega^{\alpha, \beta, 1}}\notag\\
		& =\left\|\left(I_{N, 1}^{\alpha, \beta}-I\right)\left(\mathcal{K}_1 e-\mathcal{T}_N \mathcal{K}_1 e\right)\left(z^{\frac{1}{\lambda}}\right)\right\|_{0, w^{\alpha, \beta,1}} \notag\\
		& \leq \left\|\left(I_{N, 1}^{\alpha, \beta}\left(\mathcal{K}_1e-\mathcal{T}_N \mathcal{K}_1 e\right)\left(z^{\frac{1}{\lambda}}\right) \right\|_{0, \omega^{\alpha, \beta,1}}\right. +\left\|\left(\mathcal{K}_1 e-\mathcal{T}_N \mathcal{K}_1 e\right)\left(z^{\frac{1}{\lambda}}\right)\right\|_{0, \omega^{\alpha, \beta,1}}\notag\\
		& \leq C \left\|\left(\mathcal{K}_1 e-\mathcal{T}_N \mathcal{K}_1 e\right)\left(z^{\frac{1}{\lambda}}\right) \right\|_{\infty}\notag\\
		& \leq C N^{-\kappa} \left\| \mathcal{K}_1 e\left(z^{\frac{1}{\lambda}}\right) \right\|_{0, \kappa} \leq C N^{-\kappa}\left\| e(\theta) \right\|_{\infty},\notag\\
		& \leq CN^{-\kappa}\left(\|{e}^{*}(\theta)\|_{\infty}+\left\|E_8(\theta)\right\|_{\infty}+\left\|E_9(\theta)\right\|_{\infty}\right), \quad 0\leq\kappa\leq1-\mu,\label{eq_E6_L2}
	\end{align}
	\begin{align}
		\left\|E_{7}(\theta)\right\|_{0, \omega^{\alpha, \beta,\lambda}}& \leq C N^{-\kappa}\left(\|{e}^{*}(\theta)\|_{\infty}+\left\|E_8(\theta)\right\|_{\infty}+\left\|E_9(\theta)\right\|_{\infty}\right), \quad 0\leq\kappa\leq1-\mu.\label{eq_E7_L2}
	\end{align}
It follows from \eqref{eq_E1_L2}-\eqref{eq_E7_L2},  together with \eqref{theorem_normcontrol_e*_w}, \eqref{theorem_normcontrol_e_w}, \eqref{eq_e_e*} and \eqref{eq_E8_infty} that the error estimates for $\left\|e^*(\theta)\right\|_{0,\omega^{\alpha,\beta,1}}$ and $\left\|e(\theta)\right\|_{0,\omega^{\alpha,\beta,1}}$ are:
	\begin{align}
		\left\|e^*(\theta)\right\|_{0,\omega^{\alpha,\beta,1}} \leq&
		CN^{-m}\Bigl\{\left(N^{-\kappa}\log N+1\right)\mathcal{K}^*\left\|\varphi(\theta)\right\|_{\infty} \notag\\
		&+N^{\frac{1}{2}-\kappa}\left\|\partial_\theta^m \varphi\left(\theta^{\frac{1}{\lambda}}\right)\right\|_{0, \omega^{\alpha+m,\beta+m,1}}
		+\left(N^{\frac{1}{2}-\kappa}+1\right)\Phi\Bigr\},\\
		\left\|e(\theta)\right\|_{0,\omega^{\alpha,\beta,1}} \leq&
		CN^{-m}\Bigl\{\left(N^{-\kappa}\log N+1\right)\mathcal{K}^*\left\|\varphi(\theta)\right\|_{\infty} \notag\\
		&+\left(N^{\frac{1}{2}-\kappa}+1\right)\left\|\partial_\theta^m \varphi\left(\theta^{\frac{1}{\lambda}}\right)\right\|_{0, \omega^{\alpha+m,\beta+m,1}}
		+\left(N^{\frac{1}{2}-\kappa}+1\right)\Phi\Bigr\}.
	\end{align}
\end{proof}
Finally, we have already proved the error estimates for $L^{\infty}$-norm and the weighted $L^2$-norm.

\begin{remark}\label{remark_form of solution}
	It is difficult to seek the form of the exact solutions for \eqref{eq_prime}, though Brunner\cite{Brunner}  has already derived the form of the exact solutions for the second kind VIDEs and VIEs with proportional delays. 
	Consider a special situation that $b_1(t)=0$, $K_2(t,\tau)=0$, we can apply Theorem 7.1.4 in \cite{Brunner} that the exact solution can be represented as 
	\begin{equation}
		\qquad \qquad \qquad \qquad \qquad \qquad y(t)=\sum_{(i,j)_{\mu}} \Psi_{i, j}(\mu) t^{i+j(2-\mu)},
	\end{equation}
where the notation $(i,j)_{\mu}:=\lbrace (i,j):i,j \in \mathbb{N}_0, i+j(2-\mu) < m+1, j, k \text{ are non-negative integers} \rbrace$. Moreover, the coefficients $\gamma_{k, j}(\mu)$ are related to $\mu$ in Theorem 7.1.4 in \cite{Brunner}. Additionally, the theorem also assume that $a_1, b_1, f_1 \in C^m(I_0) $ and $K_1, K_2 \in C^m(D)(m \geq 1)$. In next section, we will not only consider the special situation.
	
Despite this, the exact solutions still exhibit limited regularity and $y^{\prime\prime}(t)$ is unbounded at $t=0^{+}$, that is $\left|y^{\prime \prime}(t)\right| \leq C t^{-\mu}$. The existence of delay terms cannot improve the regularity of the solution\cite{WVIDEs_hp_QinYu}\cite{Remark_2}. Furthermore, if the exact solution is unknown, it can be made by the following strategy: choose $\lambda=\frac{1}{q}$ of $\mu=\frac{p}{q}$ with integers $p$ and $q$.
\end{remark}

\section{Numerical experiments}\label{section_Numerical experiments}
In this section, some numerical experiments are showed in order to verify the proposed numerical method. All the errors will be presented in the $L^{\infty}(I)$ and $L^{2}_{\omega^{\alpha,\beta,\lambda}}(I)$-norm in the following text, where $\alpha$ and $\beta$ are related coefficients. 
The main contributions of these numerical results is that when the functions $y\left(t^{\frac{1}{\lambda}}\right)$ and $y^{\prime}\left(t^{\frac{1}{\lambda}}\right)$ are smooth enough, the proposed method achieves exponential convergence. All left figures in the following text represent $\left\|e(\theta)\right\|_{0,\omega^{\alpha,\beta,1}}$ and $\|e(\theta)\|_{\infty}$, all right figures represent the trend for $\left\|e^*(\theta)\right\|_{0,\omega^{\alpha,\beta,1}}$ and $\|e^*(\theta)\|_{\infty}$ with the change of $N$. All upper tables represent $\left\|e(\theta)\right\|_{0,\omega^{\alpha,\beta,1}}$ and $\|e(\theta)\|_{\infty}$, all lower tables represent $\left\|e^*(\theta)\right\|_{0,\omega^{\alpha,\beta,1}}$ and $\|e^*(\theta)\|_{\infty}$..
\begin{example}\label{example_second_NO1}
Consider the following problem: 
	\begin{equation}
		\left\{\begin{array}{l}
			y^{\prime}(t)=-y(t)+y(\varepsilon t)+f_1(t)
			-\int_0^t (t-s)^{-\mu}   e^{s^{1-\mu}} y(s) d s+\int_0^{\varepsilon t} (\varepsilon t-\tau)^{-\mu} e^{\tau^{1-\mu}}  y(\tau) d \tau, \quad t \in[0,1],  \\
			y(0)=0 .
		\end{array}\right.
	\end{equation}
 where  the exact solution is $y(t)=t e^{-t^{1-\mu}}$ with $f_1(t)=\left(1-(1-\mu) t^{1-\mu}+t\right) e^{-t^{1-\mu}}+(1+e^{2-\mu})B(1-\mu, 2)t^{2-\mu}-(\varepsilon t) e^{-(\varepsilon t)^{1-\mu}}, B(\cdot, \cdot)$ for the Beta function.
 
In this example, we take $\lambda=\mu=0.5 , T=1$. Through transformation $t=\theta^{\frac{1}{\lambda}}$, $y\left(\theta^{\frac{1}{\lambda}}\right)$ and $y^{\prime}\left(\theta^{\frac{1}{\lambda}}\right)$ are both analytical even if $y(t)$ and $y^{\prime}(t)$ are weakly
 singular at $t = 0$. 
 
 In Figures \ref{Fig_second_NO1_0.5} and \ref{Fig_second_NO1_1}, we can see the errors and convergence results intuitively. Besides, we the specific datas of the error bounds through Tables \ref{tabular_second_NO1_e} and \ref{tabular_second_NO1_e*}.
When $\lambda=1$, the namely polynomial collocation method can only arrive at algebraic convergence, with a minimum error of $10^{-6}$ magnitude at $N=50$. When $\lambda=\frac{1}{2}$, exponential convergence can be easily achieved, with a minimum error of $7.07828\times10^{-11}$ at $N=12$. It is evident from comparison that this method is much more accurate.

		\begin{figure} 
		
		\subfloat[$\lambda=\frac{1}{2},\left\|e(\theta)\right\|_{0,\omega^{\alpha,\beta,1}},\|e(\theta)\|_{\infty}$]{\includegraphics[width=0.5\textwidth]{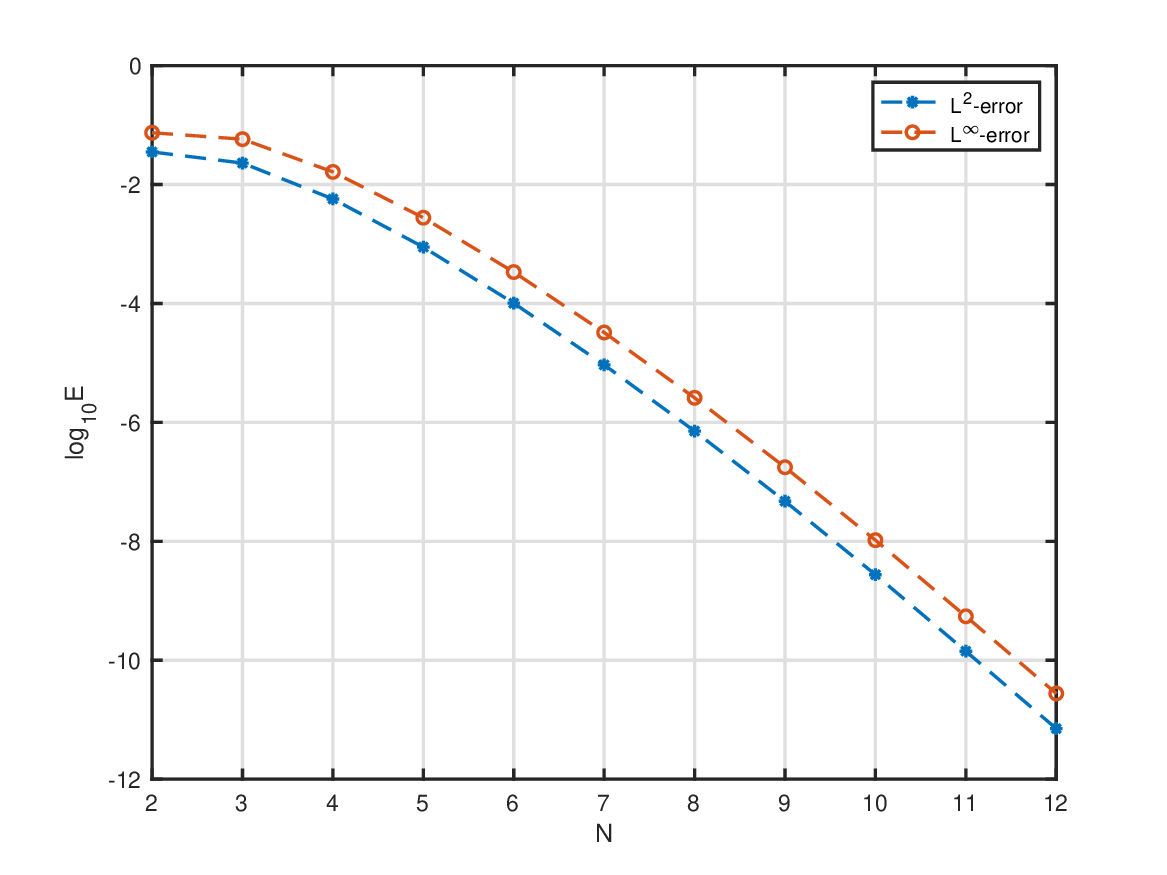}}%
		\hfill
		\subfloat[$\lambda=\frac{1}{2},\left\|e^*(\theta)\right\|_{0,\omega^{\alpha,\beta,1}},\|e^*(\theta)\|_{\infty}$]{\includegraphics[width=0.5\textwidth]{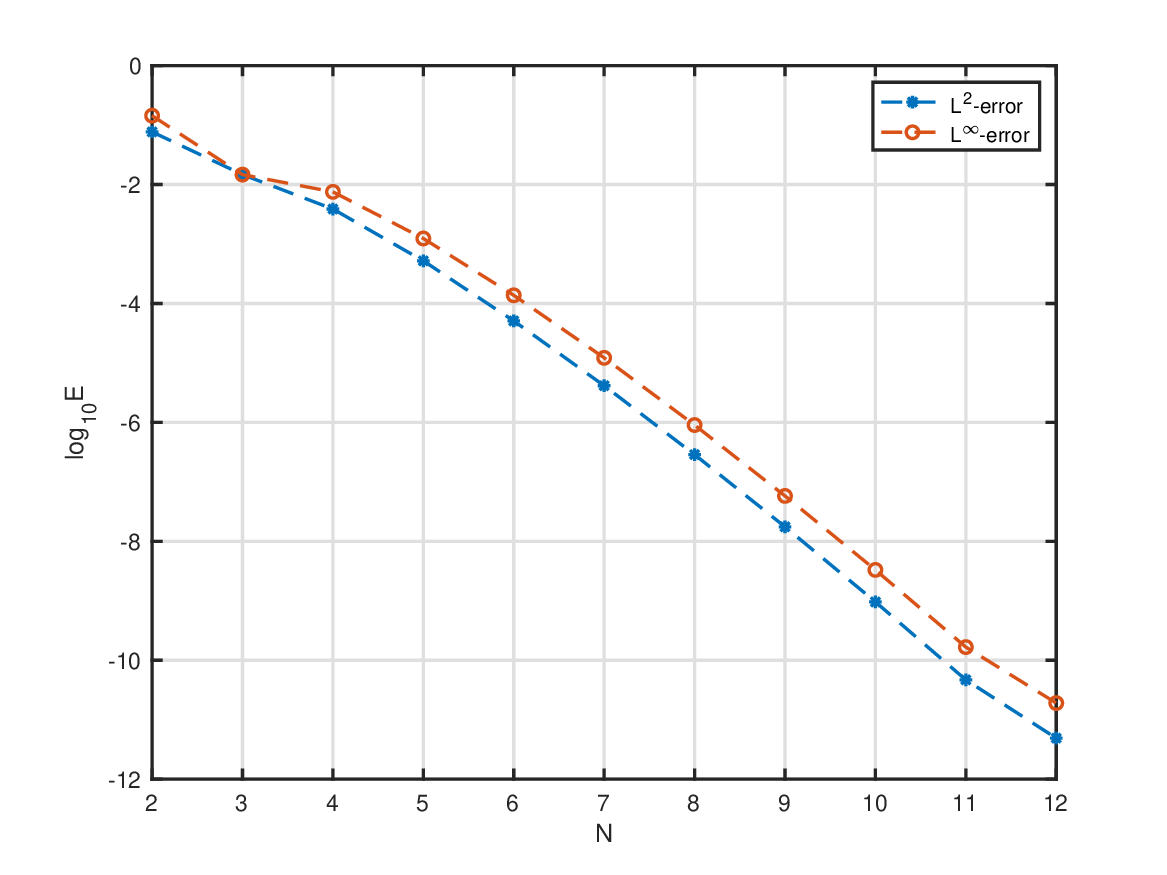}}
		\caption{\bf{Example \ref{example_second_NO1}} with $\lambda=\frac{1}{2}$}
		\label{Fig_second_NO1_0.5}
	\end{figure}
	
	\begin{figure}
		\subfloat[$\lambda=1,\left\|e(\theta)\right\|_{0,\omega^{\alpha,\beta,1}},\|e(\theta)\|_{\infty}$]{\includegraphics[width=0.5\textwidth]{ 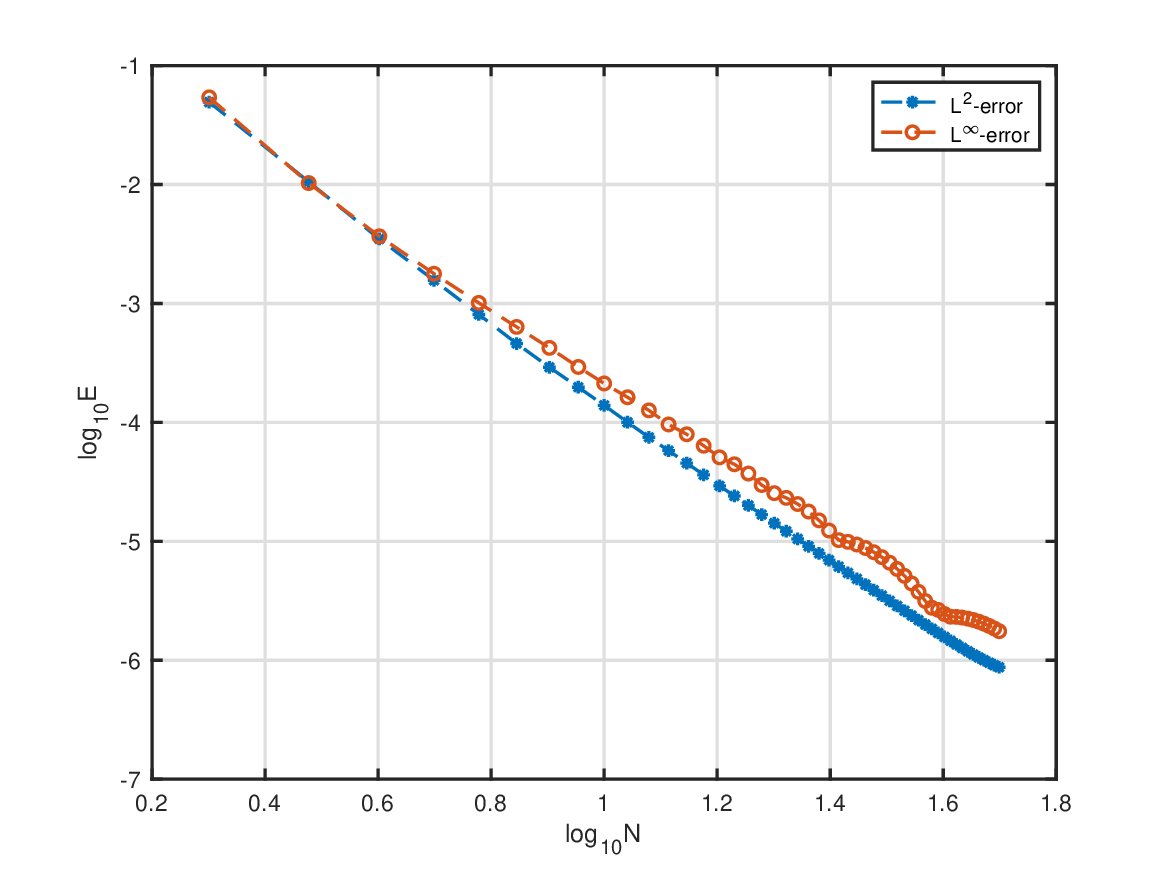}}%
		\hfill
		\subfloat[$\lambda=1,\left\|e^*(\theta)\right\|_{0,\omega^{\alpha,\beta,1}},\|e^*(\theta)\|_{\infty}$]{\includegraphics[width=0.5\textwidth]{ 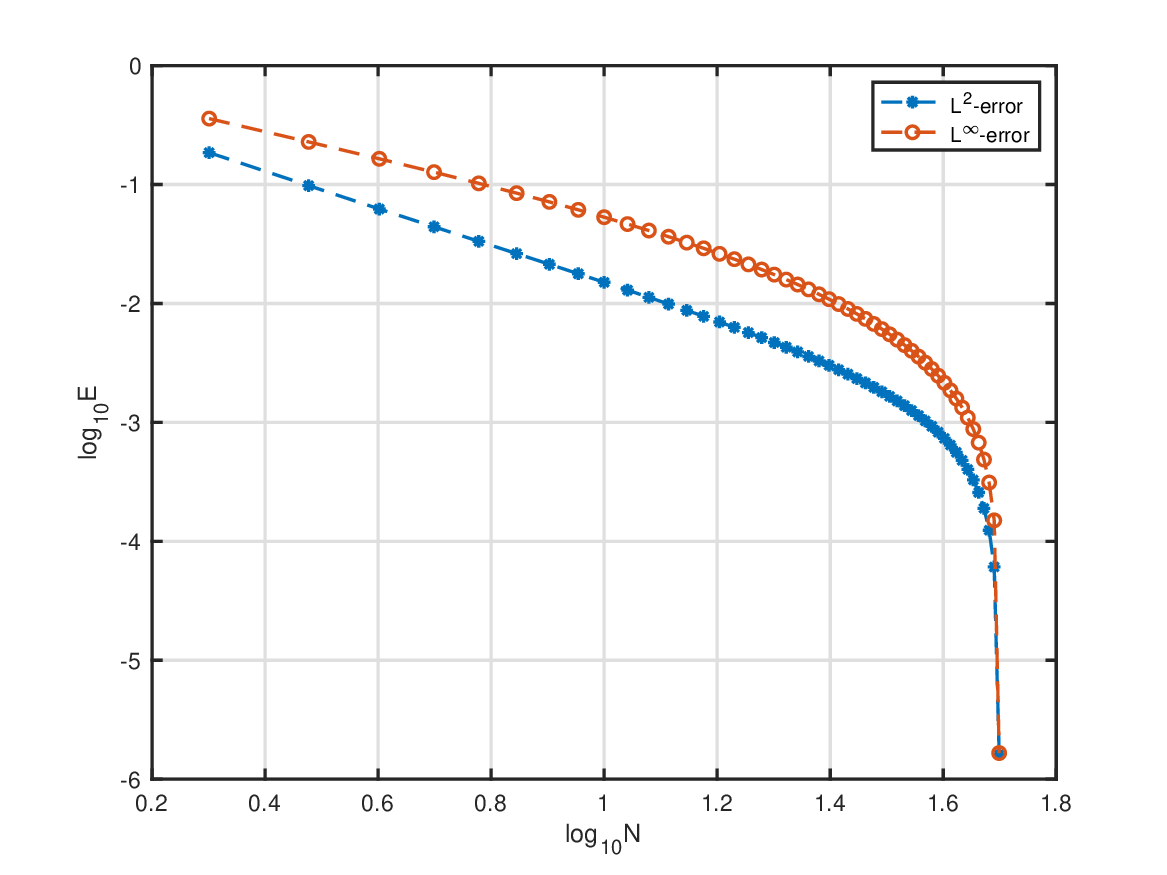}}
		\caption{\bf{Example \ref{example_second_NO1}} with $\lambda=1$}
		\label{Fig_second_NO1_1}
	\end{figure}
	
	\begin{table}[!ht]
		\centering
		\caption{\bf{Example \ref{example_second_NO1}} with $\lambda=\frac{1}{2}$: $\left\|e(\theta)\right\|_{0,\omega^{\alpha,\beta,1}}$ and $\|e(\theta)\|_{\infty}$.}
		\begin{tabular}{llllll}
			\hline$N$& 4 & 6 & 8 & 10 & 12\\
			\hline$L^2$-error  & $5.72413 \mathrm{e}-03$ & $1.01173 \mathrm{e}-04$ & $7.13447 \mathrm{e}-07$ & $2.74588 \mathrm{e}-09$ & $7.07828 \mathrm{e}-12$\\
			$L^{\infty}$-error  & $1.63303 \mathrm{e}-02$ & $3.37608\mathrm{e}-04$ & $2.59111 \mathrm{e}-06$ & $1.04489 \mathrm{e}-08$ &$2.76097 \mathrm{e}-11$\\
			\hline
		\end{tabular}
		\label{tabular_second_NO1_e}
	\end{table}
	
	\begin{table}[!ht]
		\centering
		\caption{\bf{Example \ref{example_second_NO1}} with $\lambda=\frac{1}{2}$: $\left\|e^*(\theta)\right\|_{0,\omega^{\alpha,\beta,1}}$ and $\|e^*(\theta)\|_{\infty}$.}
		\begin{tabular}{llllll}
			\hline$N$& 4 & 6 & 8 & 10 & 12\\
			\hline$L^2$-error  & $3.87671 \mathrm{e}-03$ & $5.08647\mathrm{e}-05$ & $2.87371 \mathrm{e}-07$ & $9.58174 \mathrm{e}-10$ & $4.89333 \mathrm{e}-12$\\
			$L^{\infty}$-error& $7.53409 \mathrm{e}-03$ & $1.37196 \mathrm{e}-04$ & $9.03004\mathrm{e}-07$ & $3.30380 \mathrm{e}-09$ & $1.90461 \mathrm{e}-11$\\
			\hline
		\end{tabular}
		\label{tabular_second_NO1_e*}
	\end{table}

\end{example}
\begin{example}\label{example_second_NO2}
Consider the following linear VIDEs:
	\begin{equation}
	\left\{\begin{array}{l}
		y^{\prime}(t)=-y(t)+y(\varepsilon t)+f_1(t)
		-\int_0^t (t-s)^{-\mu}   e^s y(s) d s+\int_0^{\varepsilon t} (\varepsilon t-\tau)^{-\mu} e^{\tau} y(\tau) d \tau, \quad t \in[0,T],  \\
		y(0)=0 .
	\end{array}\right.
	\end{equation}
Let $\varepsilon = 0.6, T=\frac{1}{2}$ and $\mu = \frac{1}{3}$ to test different cases. We set $f_1(t)$ such that the exact solution is $y(t) = t^{2-\mu}e^{-t}$, where $\mu \in (0,1)$, $y(t)$ has a weakly singularity at $t = 0^+$. In this example, $f_1(t)=(2-\mu)t^{1-\mu}e^{-t}+B(1-\mu,3-\mu)t^{3-2\mu}(1+e^{3-2\mu})-(\varepsilon t)^{2-\mu}e^{-\varepsilon t}$.

In fact, considering the structure of the solution, we need to choose $\lambda=\frac{1}{3}$ so that $y\left(t^{\frac{1}{\lambda}}\right)$ and $y^{\prime}\left(t^{\frac{1}{\lambda}}\right)$ meet the requirements. Numerical convegence is shown in Figures \ref{Fig_second_NO2_0.5} and \ref{Fig_second_NO2_1}. Tables \ref{tabular_second_NO2_e} exhibit the errors reaching $10^{-7.5}$. The numerical results are what we expected.
		\begin{figure}
		\subfloat[$\lambda=\frac{1}{3},\left\|e(\theta)\right\|_{0,\omega^{\alpha,\beta,1}},\|e(\theta)\|_{\infty}$]{\includegraphics[width=0.5\textwidth]{ 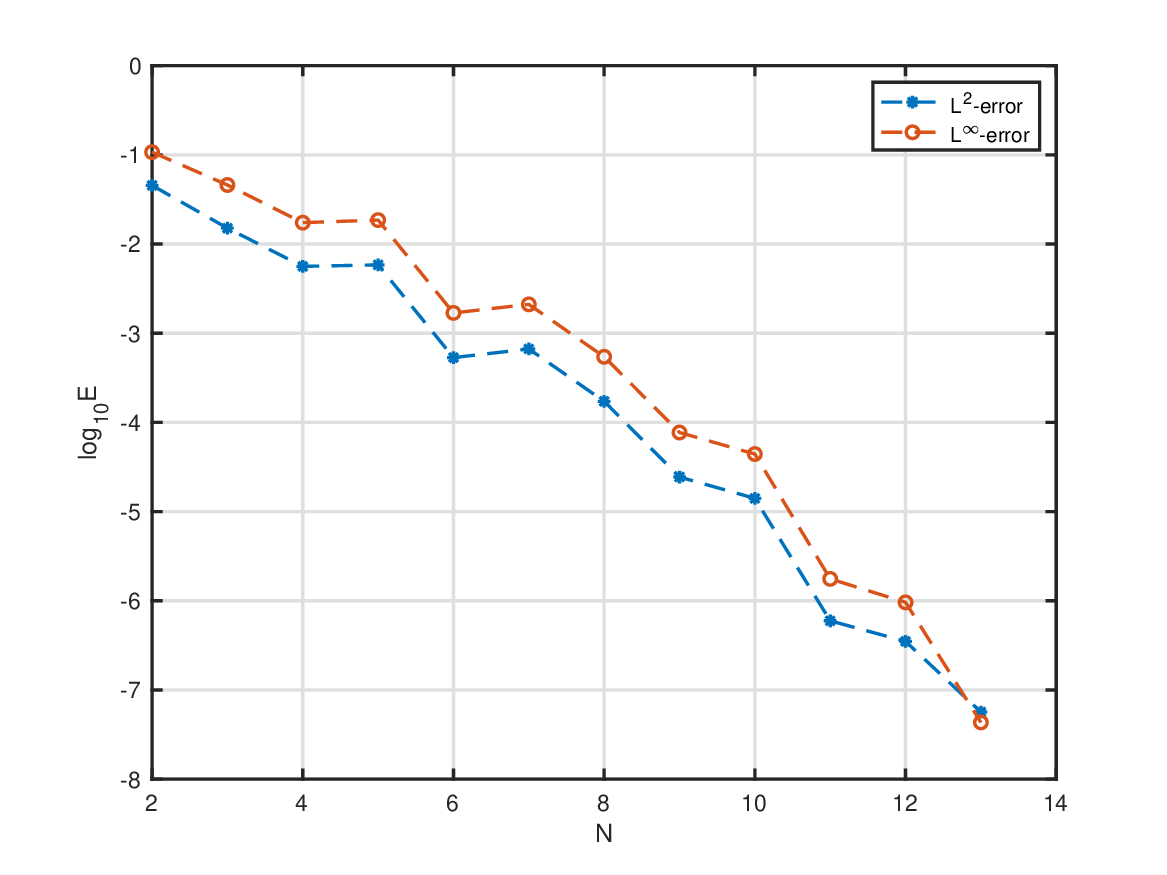}}%
		\hfill
		\subfloat[$\lambda=\frac{1}{3},\left\|e^*(\theta)\right\|_{0,\omega^{\alpha,\beta,1}},\|e^*(\theta)\|_{\infty}$]{\includegraphics[width=0.5\textwidth]{ 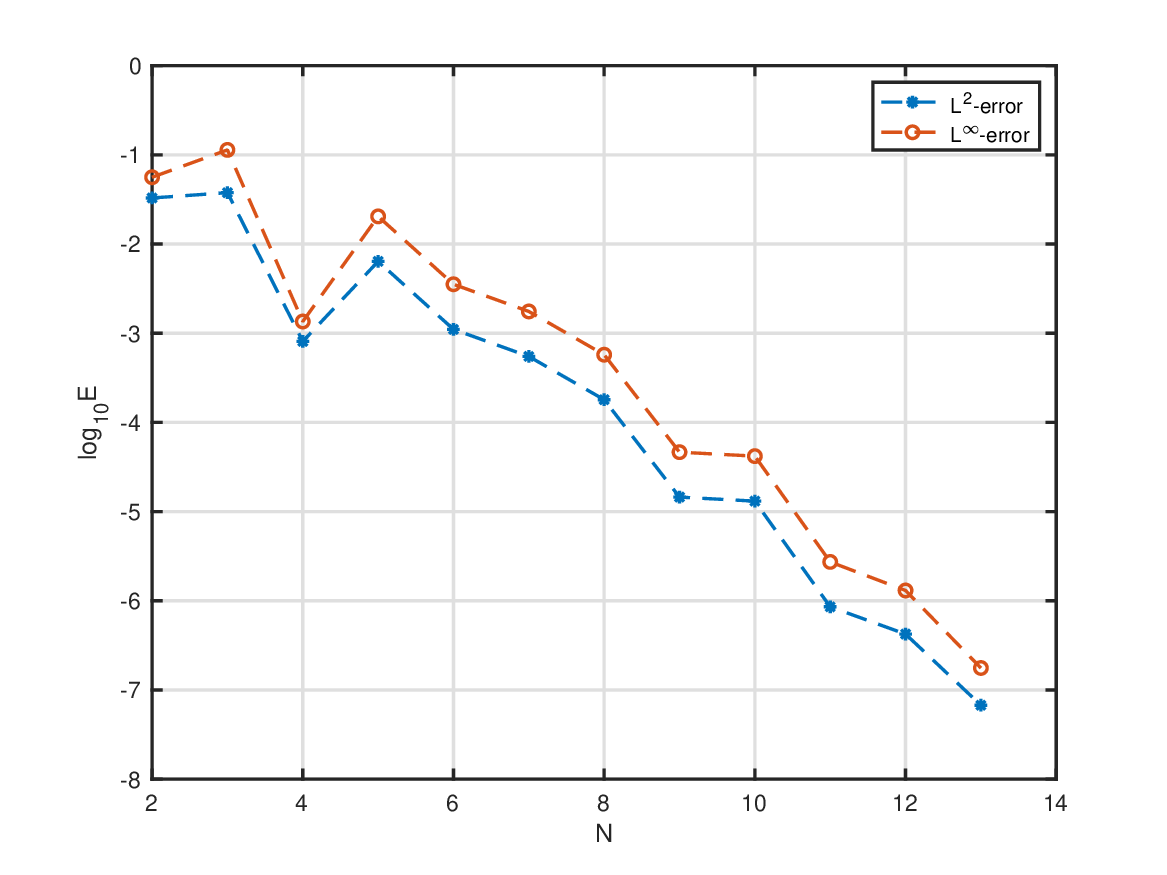}}
		\caption{\bf{Example \ref{example_second_NO2} } with $\lambda=\frac{1}{3}$}
		\label{Fig_second_NO2_0.5}
	\end{figure}
	
	\begin{figure}
		\subfloat[$\lambda=1,\left\|e(\theta)\right\|_{0,\omega^{\alpha,\beta,1}},\|e(\theta)\|_{\infty}$]{\includegraphics[width=0.5\textwidth]{ 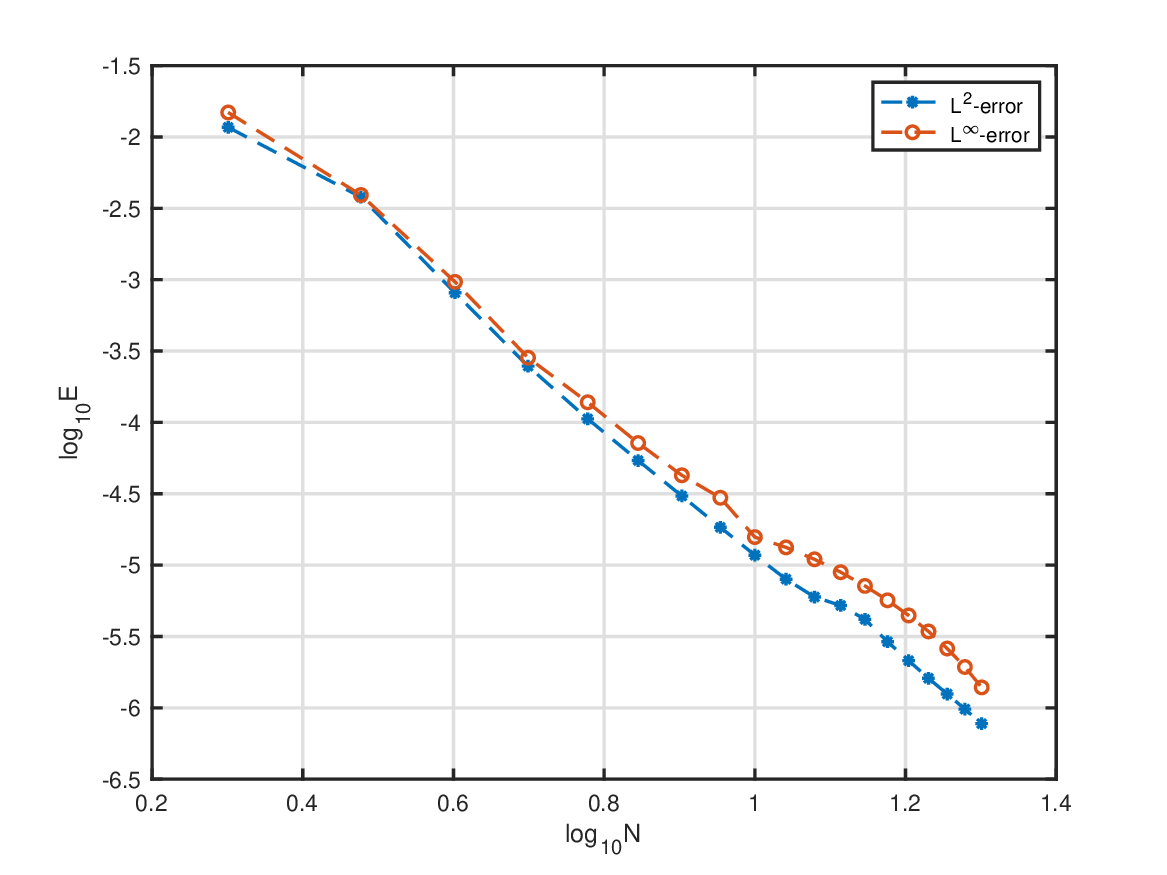}}%
		\hfill
		\subfloat[$\lambda=1,\left\|e^*(\theta)\right\|_{0,\omega^{\alpha,\beta,1}},\|e^*(\theta)\|_{\infty}$]{\includegraphics[width=0.5\textwidth]{ 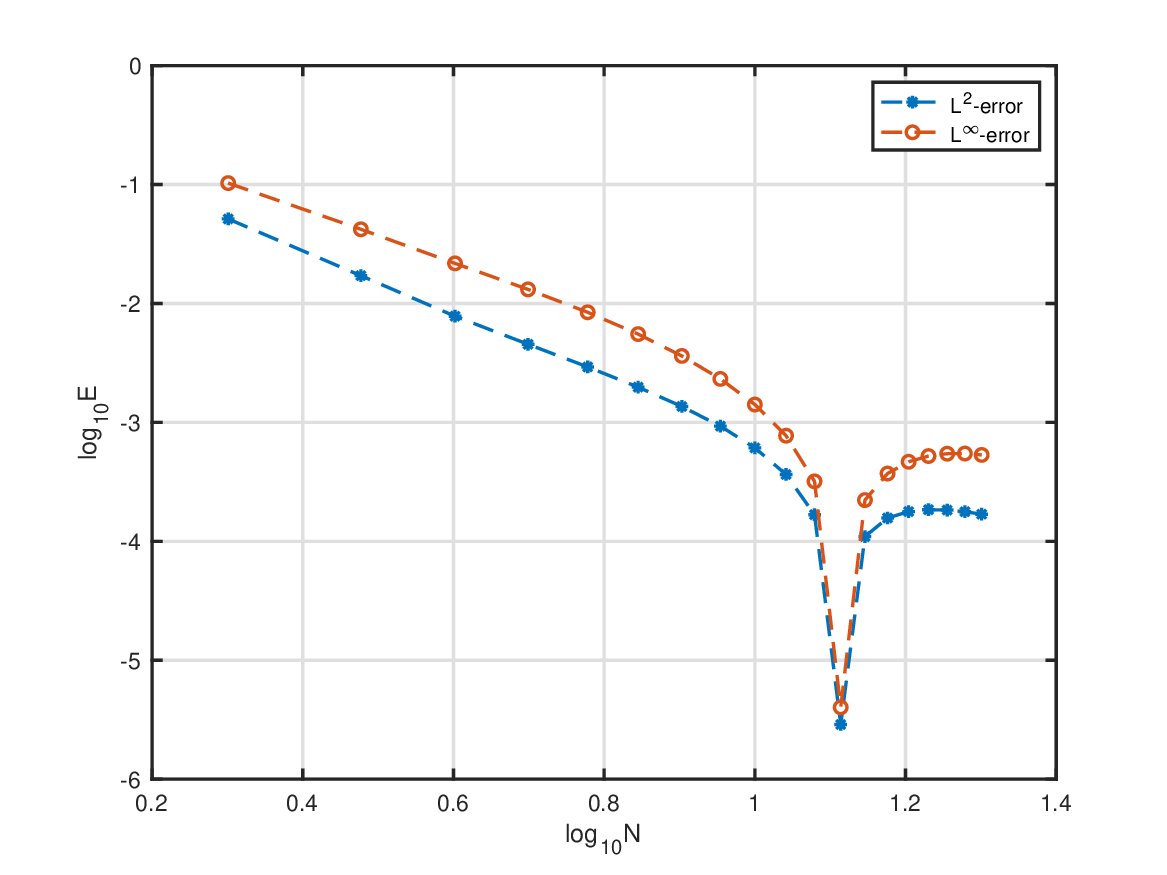}}
		\caption{\bf{Example \ref{example_second_NO2} } with $\lambda=1$}
		\label{Fig_second_NO2_1}
	\end{figure}
	
	\begin{table}[!ht]
		\centering
		\caption{\bf{Example \ref{example_second_NO2} } with $\lambda=\frac{1}{3}$: $\left\|e(\theta)\right\|_{0,\omega^{\alpha,\beta,1}}$ and $\|e(\theta)\|_{\infty}$.}
		\begin{tabular}{llllll}
			\hline$N$ & 5 & 7 & 9 & 11 & 13\\
			\hline$L^2$-error & $5.83579 \mathrm{e}-03$ & $6.65806 \mathrm{e}-04$ & $2.44113 \mathrm{e}-05$ & $5.96438 \mathrm{e}-07$ & $5.66326 \mathrm{e}-08$\\
			$L^{\infty}$-error & $1.85401\mathrm{e}-02$ & $2.10522\mathrm{e}-03$ & $7.70154 \mathrm{e}-05$ &$1.76023 \mathrm{e}-06$ &$4.32917 \mathrm{e}-08$\\
			\hline
		\end{tabular}
		\label{tabular_second_NO2_e}
	\end{table}
	
	\begin{table}[!ht]
		\centering
		\caption{\bf{Example \ref{example_second_NO2}} with  $\lambda=\frac{1}{3}$: $\left\|e^*(\theta)\right\|_{0,\omega^{\alpha,\beta,1}}$ and $\|e^*(\theta)\|_{\infty}$.}
		\begin{tabular}{llllll}
			\hline$N$ & 5 & 7 & 9 & 11 & 13\\
			\hline$L^2$-error & $6.38203\mathrm{e}-03$ & $5.48957 \mathrm{e}-04$ & $1.45378 \mathrm{e}-05$ & $8.56094 \mathrm{e}-07$ & $6.74216 \mathrm{e}-08$\\
			$L^{\infty}$-error & $2.04019\mathrm{e}-02$ & $1.75321\mathrm{e}-03$ & $4.63605 \mathrm{e}-05$ &$2.72662 \mathrm{e}-06$ &$1.76395 \mathrm{e}-07$\\
			\hline
		\end{tabular}
		\label{tabular_second_NO2_e*}
	\end{table}

\end{example}

\begin{example}\label{example_second_NO3}
	Continue to consider the equation in Example \ref{example_second_NO2}, the given function $f_1$ will be
changed into	
	$$
	\begin{aligned}
		f_1(t)=&e^{-t}\left((t^{w_1}(1+w_1-t)+t^{w_2}(1+w_2-t))\right)
		+(t^{1+w_1}+t^{1+w_2})e^{-t}
		-((\varepsilon t)^{1+w_1}+(\varepsilon t)^{1+w_2})e^{-\varepsilon t}\\
		&-B(1-\mu,w_1+2)t^{2-\mu+w_1}(e^{2-\mu+w_1}+1)
		-B(1-\mu,w_2+2)t^{2-\mu+w_2}(e^{2-\mu+w_2}+1).
	\end{aligned}
	$$
	where $T=1$, $\mu=\frac{1}{2}$ and the exact solution $y(t)=(t^{1+w_1}+t^{1+w_2})e^{-t}$. All other parameters remain unchanged.
	
	The purpose is to test the effectiveness of the method for a more complicated situation. It is obvious that we can't guarantee $y\left(t^{\frac{1}{\lambda}}\right)$ and $y^{\prime}\left(t^{\frac{1}{\lambda}}\right)$ are analytic with the selection $w_1=\frac{1}{2}$,$w_2=\sqrt{2}$. Because of Figures \ref{Fig_second_NO3_0.5} and \ref{Fig_second_NO3_1}, we can deduce the numerical results of $\lambda=0.5$ achieve the exponential convergence rates, which are better and much faster than the ones of $\lambda=1$.
	
	 Although this example is more complicated and interval expansion, the convergence results in Figure \ref{Fig_second_NO3_0.5} and Table \ref{tabular_second_NO3_e} achieve better than the ones in Figure \ref{Fig_second_NO2_0.5} and Table \ref{tabular_second_NO2_e}. We guess the reason is that $\lambda=\frac{1}{2}$ is a more suitable parameter for the numerical method.
	
		\begin{figure}
		\subfloat[$\lambda=\frac{1}{2},\left\|e(\theta)\right\|_{0,\omega^{\alpha,\beta,1}},\|e(\theta)\|_{\infty}$]{\includegraphics[width=0.5\textwidth]{ 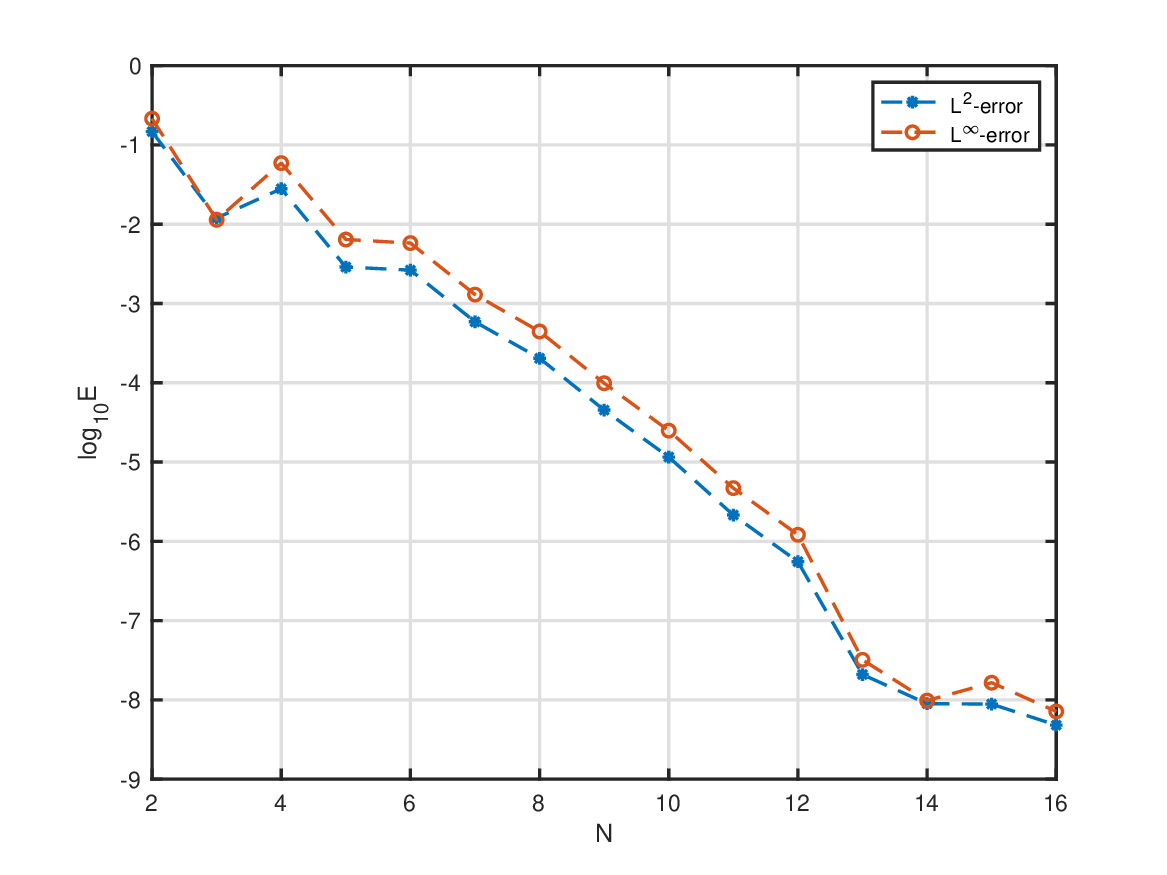}}%
		\hfill
		\subfloat[$\lambda=\frac{1}{2},\left\|e^*(\theta)\right\|_{0,\omega^{\alpha,\beta,1}},\|e^*(\theta)\|_{\infty}$]{\includegraphics[width=0.5\textwidth]{ 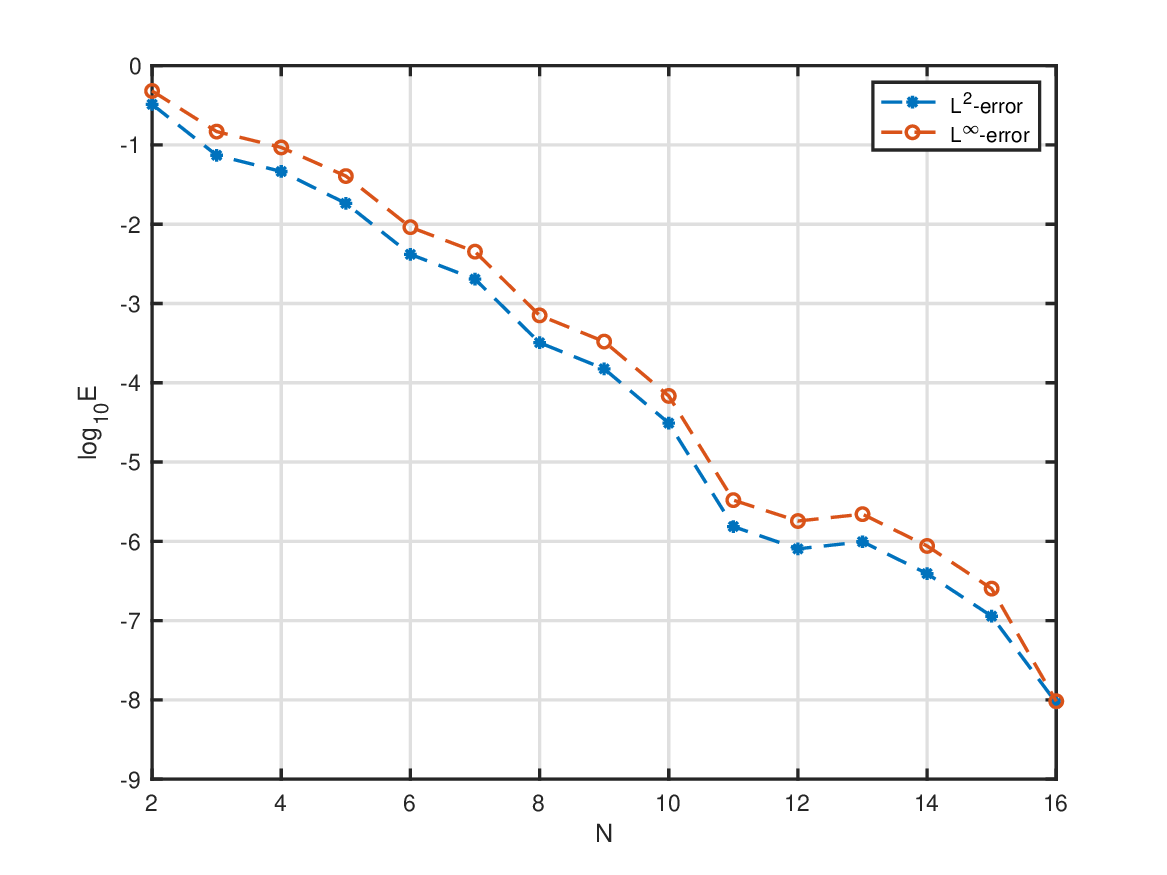}}
		\caption{\bf{Example \ref{example_second_NO3} } with $\lambda=\frac{1}{2}$}
		\label{Fig_second_NO3_0.5}
	\end{figure}
	
	\begin{figure}
		\subfloat[$\lambda=1,\left\|e(\theta)\right\|_{0,\omega^{\alpha,\beta,1}},\|e(\theta)\|_{\infty}$]{\includegraphics[width=0.5\textwidth]{ 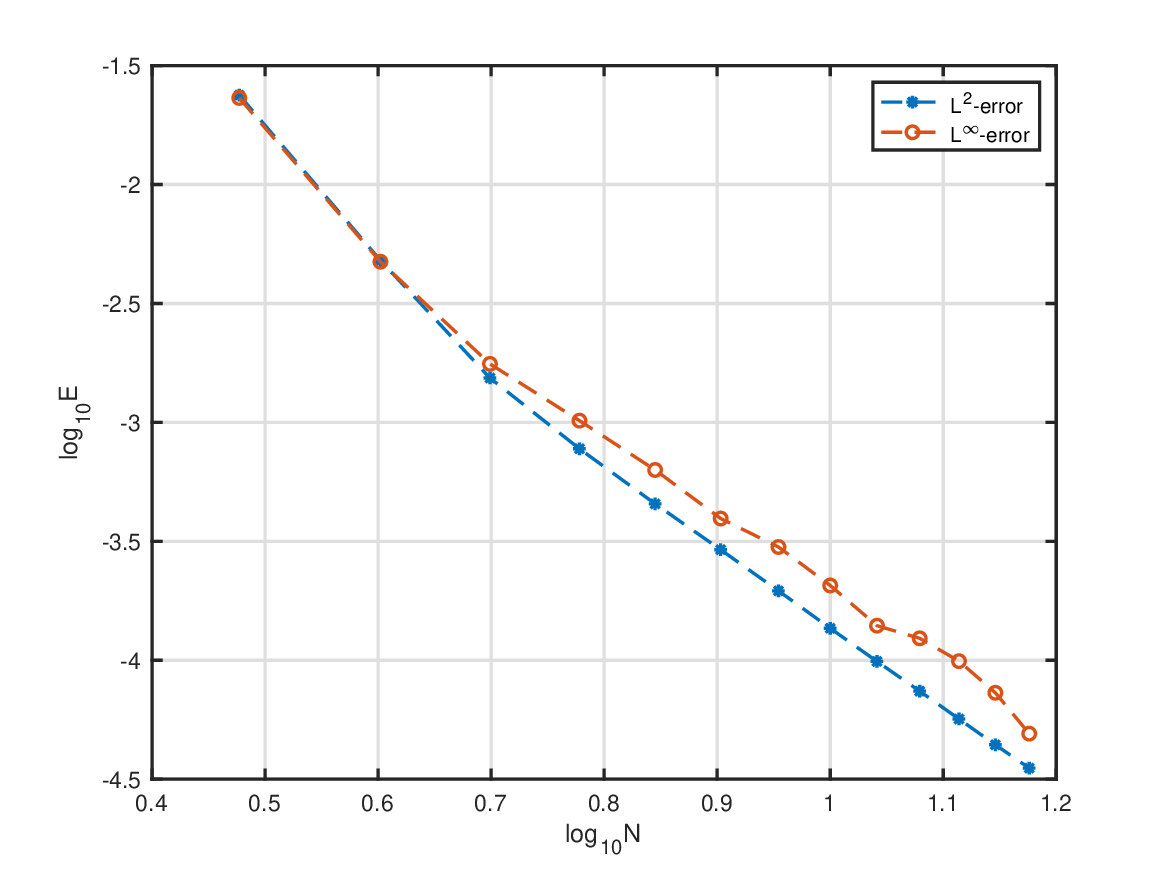}}%
		\hfill
		\subfloat[$\lambda=1,\left\|e^*(\theta)\right\|_{0,\omega^{\alpha,\beta,1}},\|e^*(\theta)\|_{\infty}$]{\includegraphics[width=0.5\textwidth]{ 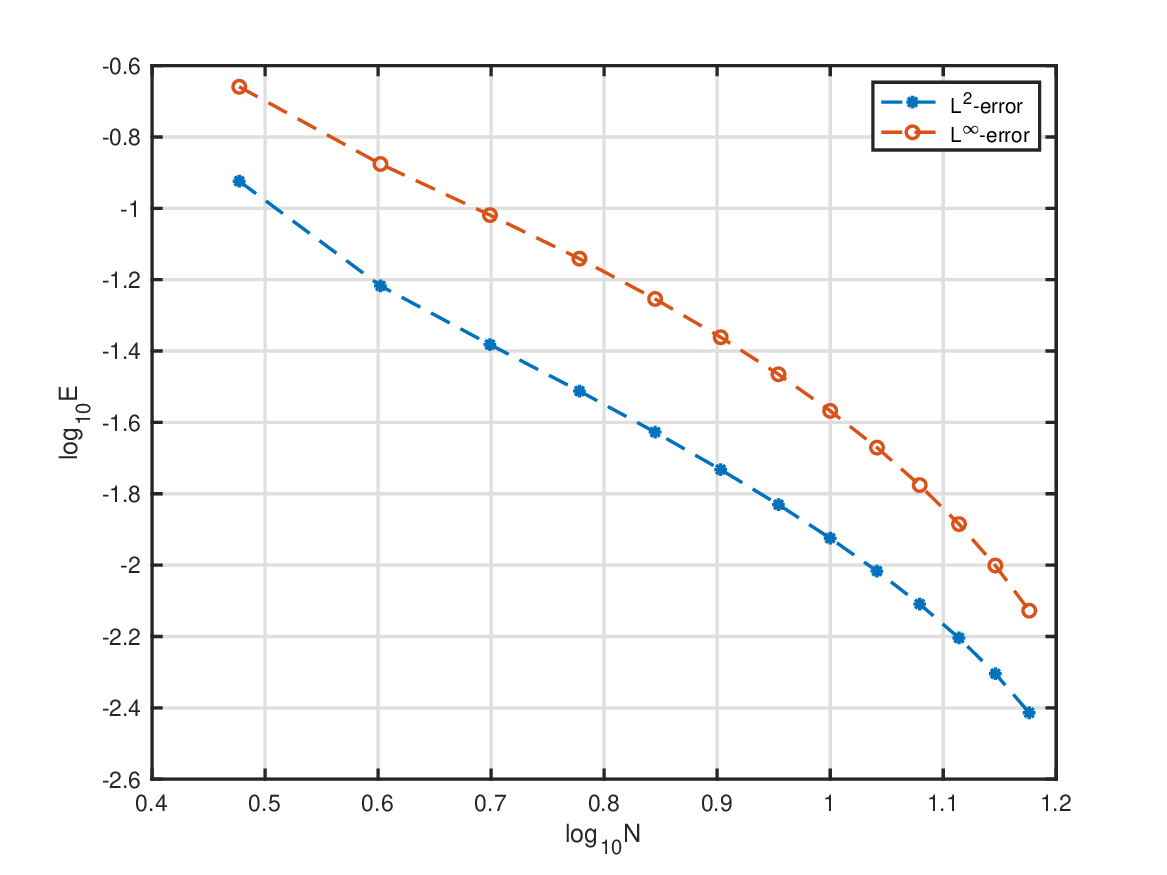}}
		\caption{\bf{Example \ref{example_second_NO3} } with $\lambda=1$}
		\label{Fig_second_NO3_1}
	\end{figure}
	
	\begin{table}[!ht]
		\centering
		\caption{\bf{Example \ref{example_second_NO3} } with $\lambda=\frac{1}{2}$: $\left\|e(\theta)\right\|_{0,\omega^{\alpha,\beta,1}}$ and $\|e(\theta)\|_{\infty}$.}
		\begin{tabular}{llllll}
			\hline$N$ & 8 & 10 & 12 & 14 & 16\\
			\hline$L^2$-error & $2.03648 \mathrm{e}-04$ & $1.15520\mathrm{e}-05$ & $5.52804 \mathrm{e}-07$ & $8.95862 \mathrm{e}-09$ & $4.80398 \mathrm{e}-09$\\
			$L^{\infty}$-error & $4.43349\mathrm{e}-04$ & $2.49200\mathrm{e}-05$ & $1.20951 \mathrm{e}-06$ &$9.79723\mathrm{e}-09$ &$7.14634 \mathrm{e}-09$\\
			\hline
		\end{tabular}
		\label{tabular_second_NO3_e}
	\end{table}
	
	\begin{table}[!ht]
		\centering
		\caption{\bf{Example \ref{example_second_NO3}} with  $\lambda=\frac{1}{2}$: $\left\|e^*(\theta)\right\|_{0,\omega^{\alpha,\beta,1}}$ and $\|e^*(\theta)\|_{\infty}$.}
		\begin{tabular}{llllll}
			\hline$N$ & 8 & 10 & 12 & 14 & 16\\
			\hline$L^2$-error & $3.20319 \mathrm{e}-04$ & $3.09893\mathrm{e}-05$ & $8.03952 \mathrm{e}-07$ & $3.91366\mathrm{e}-07$ & $9.18374 \mathrm{e}-09$\\
			$L^{\infty}$-error & $7.07185\mathrm{e}-04$ & $6.81766\mathrm{e}-05$ & $1.80248 \mathrm{e}-06$ &$8.74139\mathrm{e}-07$ &$9.61830 \mathrm{e}-09$\\
			\hline
		\end{tabular}
		\label{tabular_second_NO3_e*}
	\end{table}

\end{example}

\begin{example}\label{example_second_NO4}
	Now a problem with the unknown exact solution is exhibited\cite{WVIDEs_hp_QinYu}:
		\begin{equation}
		\left\{\begin{array}{l}
			y^{\prime}(t)=\cos(t)y(t)+e^{-t}y(\varepsilon t)+\sin (2 t)-\int_0^t(t-s)^{-\mu}(1+\sin (t s)) y(s) d s \\
			\qquad \quad-\int_0^{\varepsilon t}(\varepsilon t-\tau)^{-\mu}(1+\cos(t \tau))y(\tau) d \tau, t \in[0, \frac{1}{2}] \\
			y(0)=3.
		\end{array}\right.
	\end{equation}
where $\mu=\frac{1}{2}, \varepsilon=0.5$ and the reference "exact" solution is computed by $\lambda=\frac{1}{2}$ and $N=18$, then we choose $\lambda=\frac{1}{2}$ for the numerical solution. From Remark \ref{remark_form of solution},  we infer that $y\left(t^{\frac{1}{\lambda}}\right)$ and $y^{\prime}\left(t^{\frac{1}{\lambda}}\right)$ are so smooth that the exponential convergence results can be observed in Figure \ref{Fig_second_NO4_0.5}, whereas an algebraic convergence results are exhibited in Figure \ref{Fig_second_NO4_1}, it converges faster than the situation $\lambda=1$.
	\begin{figure}
		\subfloat[$\lambda=\frac{1}{2},\left\|e(\theta)\right\|_{0,\omega^{\alpha,\beta,1}},\|e(\theta)\|_{\infty}$]{\includegraphics[width=0.5\textwidth]{ 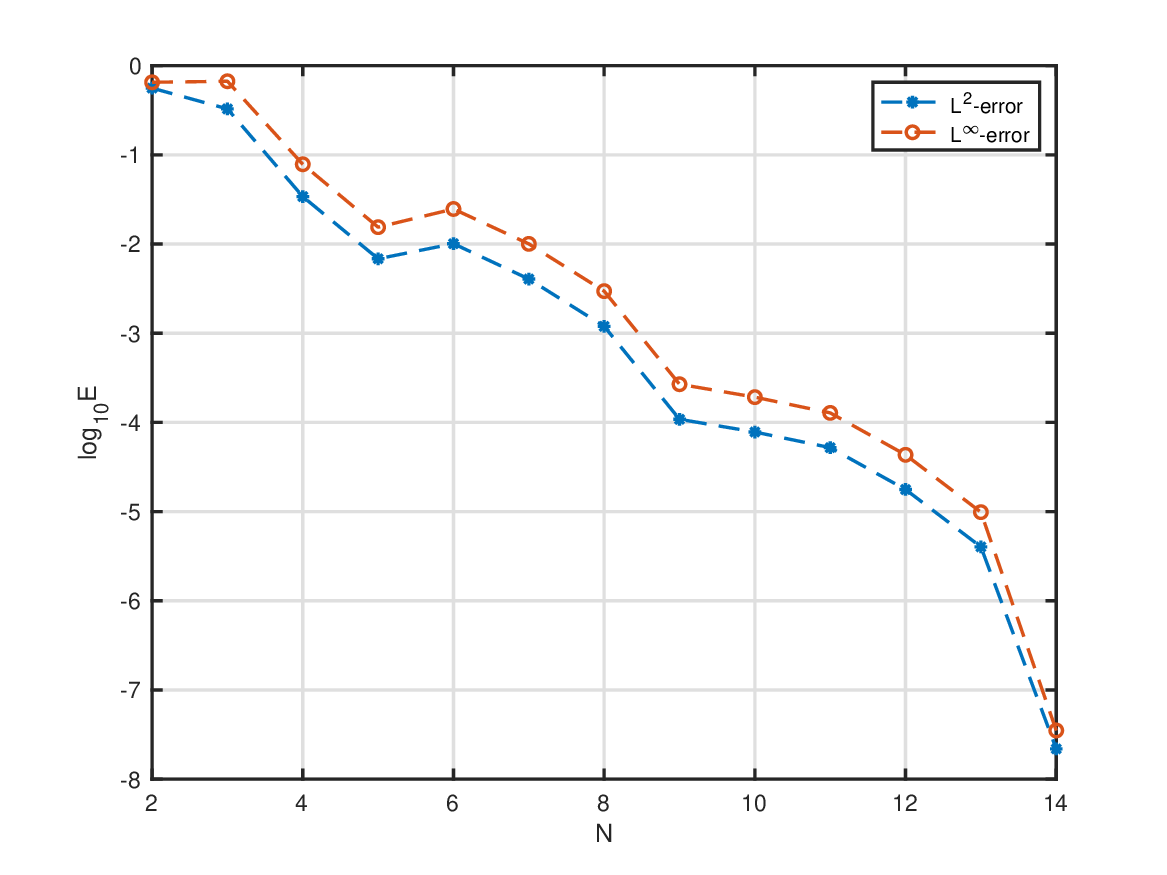}}%
		\hfill
		\subfloat[$\lambda=\frac{1}{2},\left\|e^*(\theta)\right\|_{0,\omega^{\alpha,\beta,1}},\|e^*(\theta)\|_{\infty}$]{\includegraphics[width=0.5\textwidth]{ 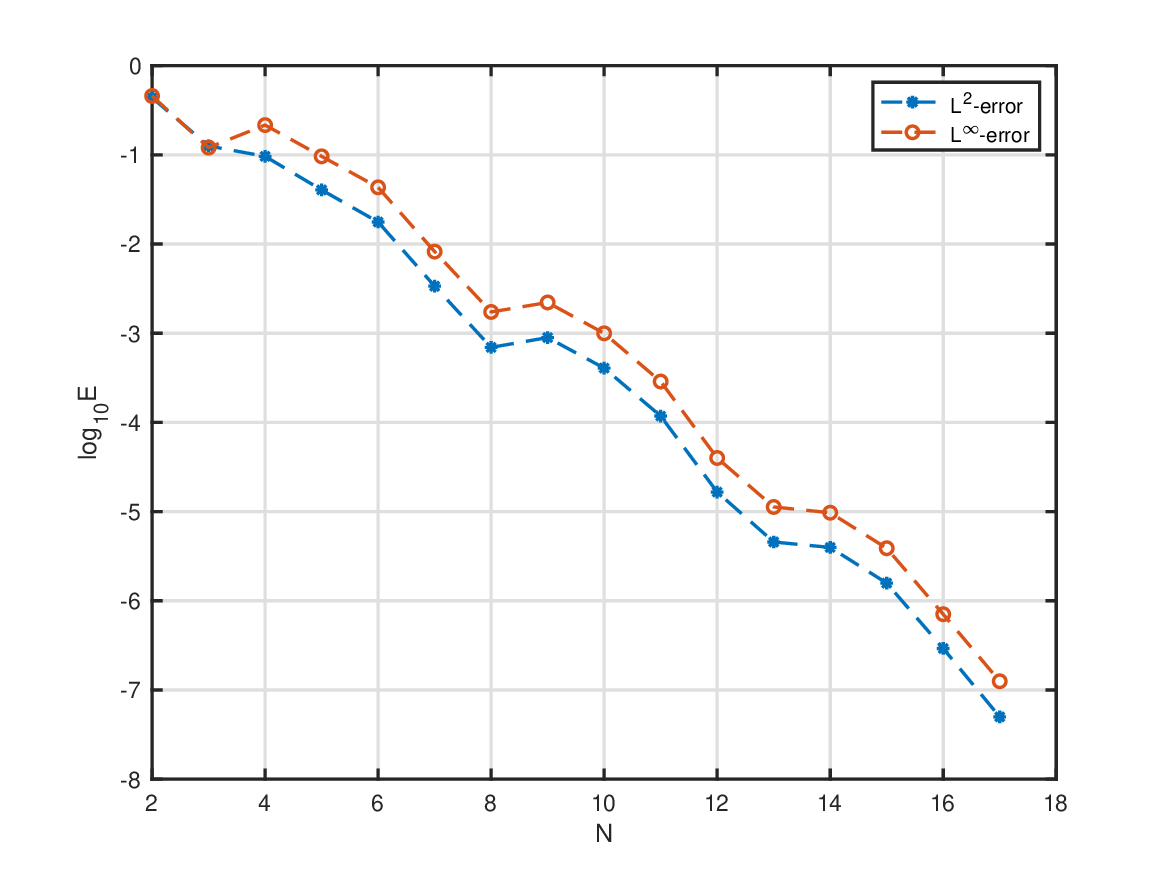}}
		\caption{\bf{Example \ref{example_second_NO4} } with $\lambda=\frac{1}{2}$}
		\label{Fig_second_NO4_0.5}
	\end{figure}
	
	\begin{figure}
		\subfloat[$\lambda=1,\left\|e(\theta)\right\|_{0,\omega^{\alpha,\beta,1}},\|e(\theta)\|_{\infty}$]{\includegraphics[width=0.5\textwidth]{ 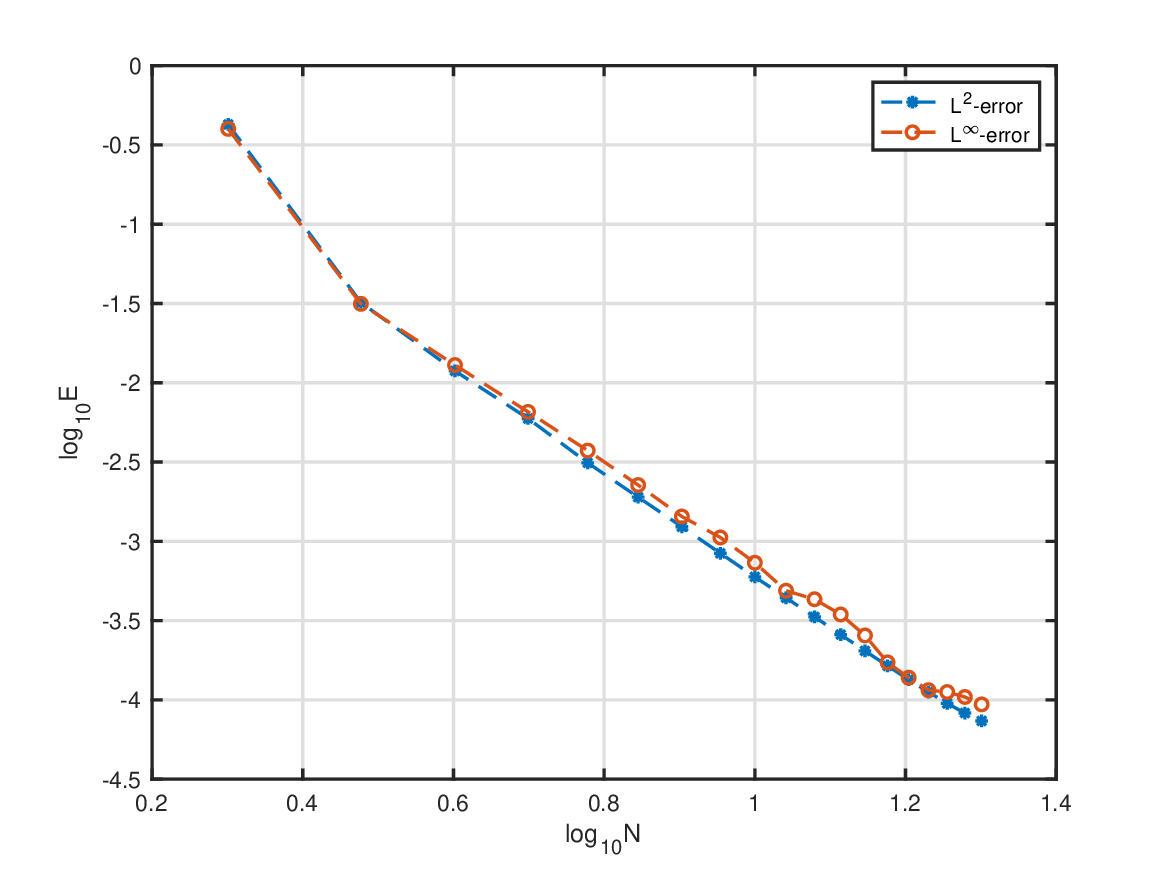}}%
		\hfill
		\subfloat[$\lambda=1,\left\|e^*(\theta)\right\|_{0,\omega^{\alpha,\beta,1}},\|e^*(\theta)\|_{\infty}$]{\includegraphics[width=0.5\textwidth]{ 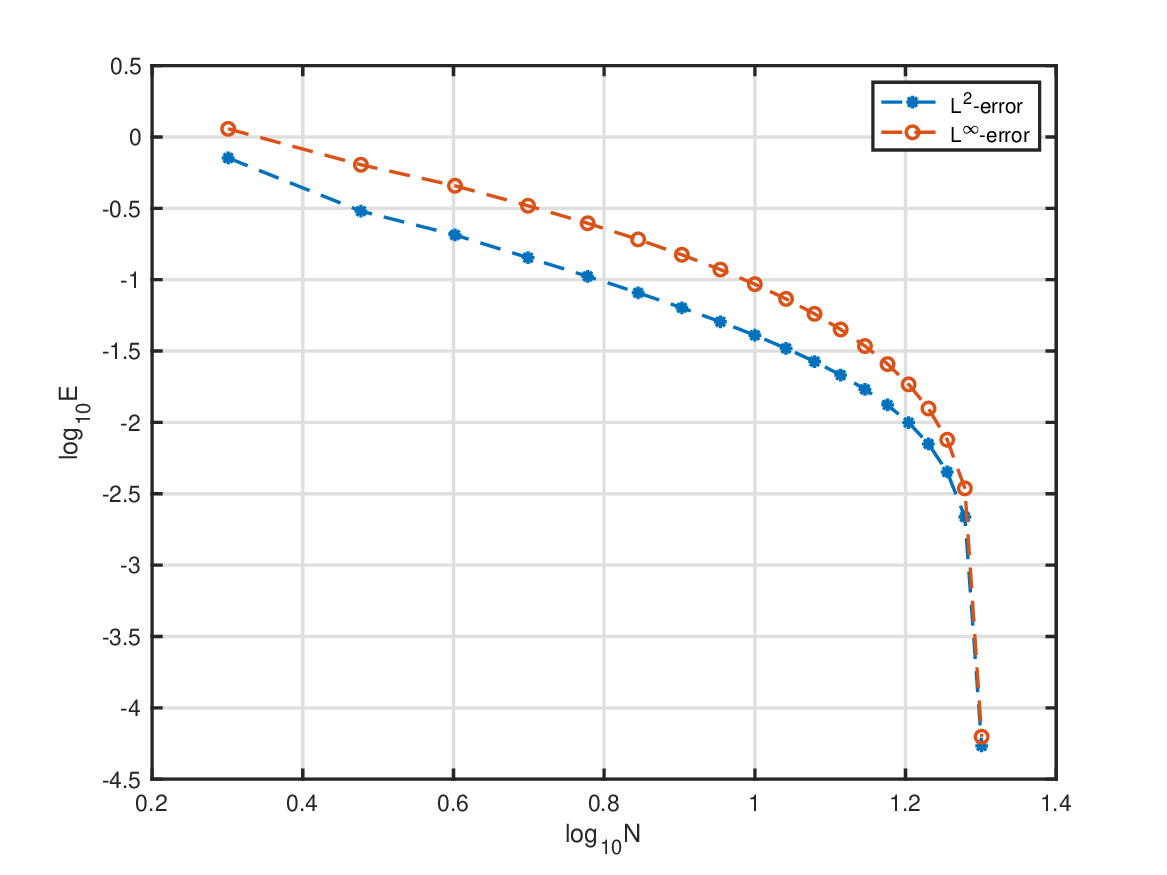}}
		\caption{\bf{Example \ref{example_second_NO4} } with $\lambda=1$}
		\label{Fig_second_NO4_1}
	\end{figure}
	
	\begin{table}[!ht]
	\centering
	\caption{\bf{Example \ref{example_second_NO4} } with $\lambda=\frac{1}{2}$: $\left\|e(\theta)\right\|_{0,\omega^{\alpha,\beta,1}}$ and $\|e(\theta)\|_{\infty}$.}
	\begin{tabular}{llllll}
		\hline$N$ & 6 & 8 & 10 & 13 & 14\\
		\hline$L^2$-error & $1.00814\mathrm{e}-02$ & $1.19409 \mathrm{e}-03$ & $7.80140 \mathrm{e}-05$ & $4.02088 \mathrm{e}-06$ & $2.18681 \mathrm{e}-08$\\
		$L^{\infty}$-error & $2.47012\mathrm{e}-02$ & $2.96236\mathrm{e}-03$ & $1.91946 \mathrm{e}-04$ &$9.85380\mathrm{e}-06$ &$3.51495 \mathrm{e}-08$\\
		\hline
	\end{tabular}
	\label{tabular_second_NO4_e}
\end{table}

\begin{table}[!ht]
	\centering
	\caption{\bf{Example \ref{example_second_NO4}} with  $\lambda=\frac{1}{2}$: $\left\|e^*(\theta)\right\|_{0,\omega^{\alpha,\beta,1}}$ and $\|e^*(\theta)\|_{\infty}$.}
	\begin{tabular}{llllll}
		\hline$N$ & 6 & 7 & 10 & 12 & 14\\
		\hline$L^2$-error & $1.76512\mathrm{e}-02$ & $3.36748\mathrm{e}-03$ & $4.06070\mathrm{e}-04$ &$1.65426 \mathrm{e}-05$ &$3.96338 \mathrm{e}-06$\\
		
		$L^{\infty}$-error & $4.31662\mathrm{e}-02$ & $8.22254 \mathrm{e}-03$ & $9.96687 \mathrm{e}-04$ & $3.98986 \mathrm{e}-05$ & $9.71171 \mathrm{e}-06$\\
		\hline
	\end{tabular}
	\label{tabular_second_NO4_e*}
\end{table}
\end{example}

\section{Conclusions}\label{section_Conclusions}
We propose and analyze a fractional Jacobi-spectral-collocation approximation for the second kind Volterra integro-differential equations with weakly singular kernels and proportional delays. Firstly, we introduced a fractional numerical method and proved the error estimates. With the suitable $\lambda$, the exponential convergence rate can be achieved after a variable change $t\rightarrow t^{\frac{1}{\lambda}}$ in order that the typical solutions $y(t)$ and its derivative $y^{\prime}(t)$  become analytical. Finally, numerical results demonstrate the theoretical proof and the efficient of the proposed method.


\begin{thebibliography}{30}
	\bibitem{application_population}
	K.G. TeBeest, Numerical and analytical solutions of Volterra's population model, \textit{SIAM Rev.},\textbf{39}:3 (1997), 484–493.
	
	\bibitem{application_粘弹性现象}
	M. Renardy, W. J. Hrusa, and J. A. Nohel, Mathematical Problems in Viscoelasticity, \textit{Pitman Monographs and Surveys in Pure and Applied Mathematics}, Vol. 35 (1987), John Wiley \& Sons.
	
	\bibitem{spectral_2}
	C. Huang, T. Tang, and Z. Zhang, Supergeometric convergence of spectral collocation methods for weakly singular Volterra and Fredholm integral equations with smooth solutions, \textit{Journal of Computational Mathematics}, \textbf{29}:6 (2011), 698–719.
	
	\bibitem{spectal_TangTao}
	Y. Chen and T. Tang, Spectral methods for weakly singular Volterra integral equations with smooth solutions, \textit{Journal of Computational and Applied Mathematics}, \textbf{233}:4 (2009), 938-950.
	
	\bibitem{spectral_Chen_WVIDEs}
	Y. Wei and Y. Chen, Convergence analysis of the spectral methods for weakly singular Volterra integro-differential equations with smooth solutions, \textit{Advances in Applied Mathematics and Mechanics}, \textbf{4}:1 (2012), 1-20.
	
	\bibitem{spectral_Chen_VIDEs_delay}
	Y. Wei and Y. Chen, Legendre spectral collocation methods for proportional Volterra delay-integro-differential equations, \textit{Journal of Scientific Computing}, \textbf{53} (2012), 672-688.
	
	\bibitem{spectral_Chen_WVIDEs_delay}
	X. Shi and Y. Chen, Spectral-Collocation Method for Volterra Delay Integro-Differential Equations with Weakly Singular Kernels, \textit{Journal of Computational and Applied Mathematics}, \textbf{392} (2020), 113458.
	
	\bibitem{spectral_4}
	X. Tao, Z. Xie, and X. Zhou, Spectral Petrov-Galerkin methods for the second kind Volterra type integro-differential equations, \textit{Numerical Mathematics: Theory, Methods and Applications}, \textbf{4}:2 (2011), 216–236.
	
	\bibitem{spectral_VT_1}
	P. Baratella, A. Palamara Orsi, Numerical solution of weakly singular linear Volterra integro-differential equations, \textit{Computing}, \textbf{77}:1 (2006), 77--96.
	
	\bibitem{spectral_VT_2}
	T. Diogo, P.M. Lima, A. Pedas, G. Vainikko, Smoothing transformation and spline collocation for weakly singular Volterra integro-differential equations, \textit{Appl. Numer. Math.}, \textbf{114} (2017), 63--76.
	
	\bibitem{spectral_VT_3}
	X. Shi, Y. Wei, F. Huang, Spectral collocation methods for nonlinear weakly singular Volterra integro-differential equations, \textit{Numer. Methods Partial Differential Equations}, \textbf{35}:2 (2019), 576--596.
	
	\bibitem{delay_Sheng_1}
	Z. Wang, C. Sheng, H. Jia, et al., A Chebyshev spectral collocation method for nonlinear Volterra integral equations with vanishing delays, \textit{East Asian J. Appl. Math.}, \textbf{8}:2 (2018), 233--260.
	
	\bibitem{delay_Sheng_2}
	C. Sheng, Z. Wang, B. Guo, An hp-spectral collocation method for nonlinear Volterra functional integro-differential equations with delays, \textit{Applied Numerical Mathematics}, \textbf{105} (2016), 1--24.
	
	\bibitem{delay_Sheng_3}
	Z. Wu, C. Shih, An hp-spectral collocation method for nonlinear Volterra integral equations with vanishing variable delays, \textit{Mathematics of Computation}, \textbf{85}:298 (2016), 635--666.
	
	\bibitem{VIDEs_delays_1}
	J. Zhao, Y. Cao, and Y. Xu, Sinc numerical solution for proportional Volterra delay-integro-differential equation, \textit{International Journal of Computer Mathematics}, \textbf{94}:5 (2017), 853-865.
	
	\bibitem{VIDEs_delays_2}
	J. Zhao, Y. Cao, and Y. Xu, Tau approximate solution of linear proportional Volterra delay-integro-differential equation, \textit{Computational and Applied Mathematics}, \textbf{39} (2020), 1-15.
	
	\bibitem{VIDEs_delays_3}
	L. Yi and B. Guo, The h-p version of the continuous Petrov–Galerkin method for nonlinear Volterra functional integro-differential equations with vanishing delays, \textit{International Journal of Numerical Analysis and Modeling}, \textbf{15}:1 
	
	\bibitem{VIDEs_delays_4}
	L. Wang and L. Yi, An h-p version of the continuous Petrov-Galerkin method for Volterra delay-integro-differential equations, \textit{Advances in Computational Mathematics}, \textbf{43} (2017), 1437-1467.
	
	\bibitem{VIDEs_delays_5}
	L. Wang and L. Yi, An h-p version of the discontinuous Galerkin method for Volterra integro-differential equations with vanishing delays, \textit{Journal of Scientific Computing}, \textbf{81}:3 (2019), 2303-2330.
	
	\bibitem{WVIDEs_hp_MaZheng}
	Z. Ma and C. Huang, An hp-version fractional collocation method for Volterra integro-differential equations with weakly singular kernels, \textit{Numerical Algorithms}, \textbf{92}:4 (2023), 2377-2404.
	
	\bibitem{WVIDEs_hp_QinYu}
	Y. Qin and C. Huang, An hp-version error estimate of spectral collocation methods for weakly singular Volterra integro-differential equations with vanishing delays, \textit{Computational and Applied Mathematics}, \textbf{43}:5 (2024), 301.
	
	\bibitem{spectral_3}
	D. Hou and C. Xu, A fractional spectral method with applications to some singular problems, \textit{Advances in Computational Mathematics}, \textbf{43}:5 (2017), 911–944.
	
	\bibitem{Muntz}
	D. Hou, Y. Lin, M. Azaiez, et al., A Müntz-collocation spectral method for weakly singular Volterra integral equations, \textit{Journal of Scientific Computing}, \textbf{81}:3 (2019), 2162-2187.
	
	\bibitem{3VIEs_MaZheng}
	Z. Ma and C. Huang, Fractional collocation method for third-kind Volterra integral equations with nonsmooth solutions, \textit{Journal of Scientific Computing}, \textbf{95}:1 (2023), 26.
	
	\bibitem{CVIDEs_MaZheng}
	Z. Ma, C. Huang, and A. A. Alikhanov, Error analysis of fractional collocation methods for Volterra integro-differential equations with noncompact operators, \textit{Journal of Scientific Computing}, \textbf{95}:1 (2023), 26.
	
	\bibitem{Lemma_linear_operator_1}
	D. Ragozin, Polynomial approximation on compact manifolds and homogeneous spaces, \textit{Transactions of the American Mathematical Society}, \textbf{150}:1 (1970), 41–53.
	
	\bibitem{Lemma_linear_operator_2}
	D. Ragozin, Constructive polynomial approximation on spheres and projective spaces, \textit{Transactions of the American Mathematical Society}, \textbf{162} (1971), 157–170.
	
	\bibitem{Ma毕业论文}
	Z. Ma, High Order Numerical Methods for Weakly Singular Volterra Integral and Integro-differential Equations, \textit{Ph.D. dissertation}, Huazhong University of Science and Technology, 2023. DOI: 10.27157/d.cnki.ghzku.2023.000001.
	
	\bibitem{Brunner}
	H. Brunner, Collocation methods for Volterra integral and related functional differential equations, \textit{Cambridge University Press}, 2004.
	
	\bibitem{Remark_2}
	A. Azizipour and S. Shahmorad, A new tau-collocation method with fractional basis for solving weakly singular delay Volterra integro-differential equations, \textit{Journal of Applied Mathematics and Computing}, \textbf{68}:4 (2022), 2435--2469. 
\end{thebibliography}
\end{document}